\theoremstyle{plain}
\newtheorem {lemma}{Lemma}
\newtheorem {proposition}[lemma]{Proposition}
\newtheorem {theorem}[lemma]{Theorem}
\newtheorem {corollary}[lemma]{Corollary}
\theoremstyle{definition}
\newtheorem{definition}[lemma]{Definition}
\newtheorem{remark}[lemma]{Remark}
\newcommand{\N}{\mathbb{N}}
\DeclareMathOperator*{\plus}{\overset{.}{\bigplus}}
\newcommand{\+}{\overset{.}{+}}
\newcommand{\minus}{\overset{.}{-}}
\newcommand{\h}{\mathfrak{H}}
\newcommand{\tr}{\operatorname{tr}}
\newcommand{\hb}{\operatorname{hb}}
\newcommand{\GL}{\operatorname{GL}}
\newcommand{\E}{\operatorname{E}}
\newcommand{\C}{\operatorname{C}}
\newcommand{\U}{\operatorname{U}}
\newcommand{\EU}{\operatorname{EU}}
\newcommand{\NU}{\operatorname{NU}}
\newcommand{\CU}{\operatorname{CU}}
\newcommand{\Ort}{\operatorname{O}}
\newcommand{\Sp}{\operatorname{Sp}}
\newcommand{\Normaliser}{\operatorname{Normaliser}}
\newcommand{\FP}{\operatorname{FP}}
\title{The subnormal structure of classical-like groups over commutative rings}
\author{Raimund Preusser}
\date{}
\begin{document}
\maketitle
\begin{abstract}
Let $n$ be an integer greater than or equal to $3$ and $(R,\Delta)$ a Hermitian form ring where $R$ is commutative. We prove that if $H$ is a subgroup of the odd-dimensional unitary group $\U_{2n+1}(R,\Delta)$ normalised by a relative elementary subgroup $\EU_{2n+1}((R,\Delta),(I,\Omega))$, then there is an odd form ideal $(J,\Sigma)$ such that 
$\EU_{2n+1}((R,\Delta),(JI^{k},\Omega_{\min}^{JI^k}\+\Sigma\circ I^{k}))\leq H \leq \CU_{2n+1}((R,\Delta),(J,\Sigma))$
where $k=12$ if $n=3$ respectively $k=10$ if $n\geq 4$. As a conseqence of this result we obtain a sandwich theorem for subnormal subgroups of odd-dimensional unitary groups.
\end{abstract}
\let\thefootnote\relax\footnotetext{{\it 2010 Mathematics Subject Classification:} 	20G35, 20H25.}
\let\thefootnote\relax\footnotetext{{\it Keywords and phrases:} classical-like groups, unitary groups, subnormal subgroups.}
\let\thefootnote\relax\footnotetext{The work is supported by the Russian Science Foundation grant 19-71-30002.}

\section{Introduction}
Recall that if $H$ is a subgroup of a group $G$ and $d$ is a nonnegative integer, then one writes $H\lhd_d G$ if $H=H_0\lhd H_1\lhd \dots\lhd H_{d-1}\lhd H_d=G$ for some subgroups $H_1,\dots,H_{d-1}$ of $G$. If $H\lhd_d G$ for some $d$, then $H$ is called a \textit{subnormal} subgroup of $G$. In 1972, J. Wilson \cite{20} proved that if $H\lhd_d \GL_n(R)$ where $n\geq 3$ and $R$ is a commutative ring, then there is an ideal $I$ such that 
\begin{equation}
\E_n(R,I^k)\leq H\leq \C_n(R,I)
\end{equation}
where $k=\frac{88(28^{d-1})-7}{27}$ if $n=3$ resp. $k=\frac{7^d-1}{6}$ if $n\geq 4$. In (1), $\E_n(R,I^k)$ denotes the relative elementary subgroup of level $I^k$ and $\C_n(R,I)$ the full congruence subgroup of level $I$. Wilson's result  was subsequently improved by L. Vaserstein \cite{vaserstein_sub,vaserstein_sub2} and N. Vavilov \cite{vavilov_sub}. For more information on the general linear case we refer the reader to the introduction in \cite{vavilov_sub2}.

A natural question is if similar results hold for other classical and classical-like groups. The (even-dimensional) hyperbolic unitary groups $\U_{2n}(R,\Lambda)$ were defined by A. Bak in 1969 \cite{bak_thesis}. They embrace the classical Chevalley groups of type $C_m$ and $D_m$, namely the even-dimensional symplectic and orthogonal groups $\Sp_{2n}(R)$ and $\Ort_{2n}(R)$. In 2006, Z. Zhang \cite{zhang_sub1} obtained a ``sandwich'' result similar to (1) for subnormal subgroups of stable hyperbolic unitary groups  $\U(R,\Lambda)=\varinjlim_{n\geq 1} \U_{2n}(R,\Lambda)$. In 2012, H. You \cite{you} obtained a sandwich result for subnormal subgroups of nonstable hyperbolic unitary groups.

In 2018, A. Bak and the author \cite{bak-preusser} defined odd-dimensional unitary groups $\U_{2n+1}(R,$ $\Delta)$. These groups generalise the even-dimensional unitary groups $\U_{2n}(R,\Lambda)$ and embrace \emph{all} classical Chevalley groups. In this article we prove that if $H\lhd_d \U_{2n+1}(R,\Delta)$ where $(R,\Delta)$ is a commutative Hermitian form ring and $n\geq 3$, then there is an odd form ideal $(I,\Omega)$ such that
\begin{equation}
\EU_{2n+1}((R,\Delta),(I^{k},\Omega_{\min}^{I^{k}}\+\Omega\circ I^{k-1}))\leq H \leq \CU_{2n+1}((R,\Delta),(I,\Omega))
\end{equation}
where $k=\frac{12^d-1}{11}$ if $n=3$ resp. $k=\frac{10^d-1}{9}$ if $n\geq 4$, see Theorems \ref{thmm3} and \ref{thmm6}. For the even-dimensional unitary groups this improves the result obtained by You in \cite{you}. Moreover, we obtain our result (2) only by straightforward computations, without using localisation.

The rest of the paper is organised as follows. In Section 2, we recall some standard notation that is used throughout the paper. In Section 3, we recall the definitions of the groups $\U_{2n+1}(R,\Delta)$ and some important subgroups. In Section 4, we define the lower and upper level of a subgroup $H\leq \U_{2n+1}(R,\Delta)$. In Section 5, we prove our main results for the groups $\U_{7}(R,\Delta)$, namely Theorems \ref{thmm1}, \ref{thmm2} and \ref{thmm3}. In Section 6, we prove our main results for the groups $\U_{2n+1}(R,\Delta),n\geq 4$, namely Theorems \ref{thmm4}, \ref{thmm5} and \ref{thmm6}.

\section{Notation}
$\mathbb{N}$ denotes the set of all positive integers. If $G$ is a group and $g,h\in G$, we let $g^h:=h^{-1}gh$, $^hg:=hgh^{-1}$ and $[g,h]:=ghg^{-1}h^{-1}$. By a ring we mean an associative ring with $1$ such that $1\neq 0$. By an ideal we mean a two-sided ideal. If $n\in \N$ and $R$ is a ring, then the set of all $n\times n$ matrices with entries in $R$ is denoted by $M_n(R)$. If $a\in M_{n}(R)$, we denote the entry of $a$ at position $(i,j)$ by $a_{ij}$, the $i$-th row of $a$ by $a_{i*}$ and the $j$-th column of $a$ by $a_{*j}$. The group of all invertible matrices in $M_{n}(R)$ is denoted by $\GL_n(R)$ and the identity element of $\GL_n(R)$ by $e$. If $a\in \GL_n(R)$, then the entry of $a^{-1}$ at position $(i,j)$ is denoted by $a'_{ij}$, the $i$-th row of $a^{-1}$ by $a'_{i*}$ and the $j$-th column of $a^{-1}$ by $a'_{*j}$. Furthermore, we denote by $^n\!R$ the set of all row vectors of length $n$ with entries in $R$ and by $R^n$ the set of all column vectors of length $n$ with entries in $R$. We consider $^n\!R$ as left $R$-module and $R^n$ as right $R$-module.

\section{Odd-dimensional unitary groups}
We describe Hermitian form rings $(R,\Delta)$ and odd form ideals $(I,\Omega)$ first, then the odd-dimensional unitary group $\U_{2n+1}(R,\Delta)$ and its elementary subgroup $\EU_{2n+1}(R,\Delta)$ over a Hermitian form ring $(R,\Delta)$. For an odd form ideal $(I,\Omega)$, we recall the definitions of the following subgroups of $\U_{2n+1}(R,\Delta)$; the preelementary subgroup $\EU_{2n+1}(I, \Omega)$ of level $(I,\Omega)$, the elementary subgroup $\EU_{2n+1}((R,\Lambda),(I,\Omega))$ of level $(I,\Omega)$, the principal congruence subgroup $\U_{2n+1}((R,\Lambda),(I,\Omega))$ of level $(I,\Omega)$, the normalised principal congruence subgroup $\NU_{2n+1}((R,\Lambda),(I,\Omega))$ of level $(I,\Omega)$, and the full congruence subgroup $\CU_{2n+1}((R,\Lambda),(I,\Omega))$ of level $(I,\Omega)$.

\subsection{Hermitian form rings and odd form ideals}\label{sec 3.1}
First we recall the definitions of a ring with involution with symmetry and a Hermitian ring.
\begin{definition}
Let $R$ be a ring and 
\begin{align*}
\bar{}:R&\rightarrow R\\
x&\mapsto \bar{x}
\end{align*}
an anti-isomorphism of $R$ (i.e. $\bar{}~$ is bijective, $\overline{x+y}=\bar x+\bar y$, $\overline{xy}=\bar y\bar x$ for any $x,y\in R$ and $\bar 1=1$). Furthermore, let $\lambda\in R$ such that $\bar{\bar x}=\lambda x\bar\lambda$ for any $x\in R$. Then $\lambda$ is called a {\it symmetry} for $~\bar{}~$, the pair $(~\bar{}~,\lambda)$ an {\it involution with symmetry} and the triple $(R,~\bar{}~,\lambda)$ a {\it ring with involution with symmetry}. A subset $A\subseteq R$ is called {\it involution invariant} iff $\bar x\in A$ for any $x\in A$. A {\it Hermitian ring} is a quadruple $(R,~\bar{}~,\lambda,\mu )$ where $(R,~\bar{}~,\lambda)$ is a ring with involution with symmetry and $\mu \in R$ is a ring element such that $\mu =\bar\mu \lambda$ .
\end{definition}
\begin{remark}\label{25}
Let $(R,~\bar{}~,\lambda,\mu )$ be a Hermitian ring.
\begin{enumerate}[(a)]
\item It is easy to show that $\bar\lambda=\lambda^{-1}$.
\item The map
\begin{align*}
\b{}:R&\rightarrow R\\
x&\mapsto \b{x}:=\bar\lambda \bar x\lambda
\end{align*}
is the inverse map of $~\bar{}~$. One checks easily that $(R,~\b{}~,\b{$\lambda$},\b{$\mu $})$ is a Hermitian ring.
\end{enumerate}
\end{remark}

Next we recall the definition of an $R^{\bullet}$-module.
\begin{definition}
If $R$ is a ring, let $R^\bullet$ denote the underlying set of the ring equipped with the  
multiplication of the ring, but not the addition of the ring. A {\it (right) $R^{\bullet}$-module} is a not 
necessarily abelian group $(G,\+)$ equipped with a map 
\begin{align*}
\circ: G\times R^{\bullet}&\rightarrow G\\
(a,x) &\mapsto a\circ x
\end{align*}
such that the following holds:
\begin{enumerate}[(i)]
\item $a\circ 0=0$ for any $a\in G$,
\item $a\circ 1=a$ for any $a\in G$,
\item $(a\circ x)\circ y=a\circ (xy)$ for any $a\in G$ and $x,y\in R$ and
\item $(a\+ b)\circ x=(a\circ x)\+(b\circ x)$ for any $a,b\in G$ and $x\in R$.
\end{enumerate}
Let $G$ and $G'$ be $R^{\bullet}$-modules. A group homomorphism $f:G\rightarrow G'$ satisfying $f(a\circ x)=f(a)\circ x$ for any $a\in G$ and $x\in R$ is called a {\it  homomorphism of $R^{\bullet}$-modules}. A subgroup $H$ of $G$ which is $\circ$-stable (i.e. $a\circ x\in H$ for any $a\in H$ and $x\in R$) is called an {\it $R^{\bullet}$-submodule}. Moreover, if $A\subseteq G$ and $B\subseteq R$, we denote by $A\circ B$ the subgroup of $G$ generated by $\{a\circ b\mid a\in A,b\in B\}$. We treat $\circ$ as an operator with higher priority than $\+$.
\end{definition}

An important example of an $R^{\bullet}$-module is the Heisenberg group, which we define next. 

\begin{definition}\label{27}
Let $(R,~\bar{}~,\lambda,\mu )$ be a Hermitian ring. Define the map.
\begin{align*}
\+: (R\times R)\times (R\times R) &\rightarrow R\times R\\
((x_1,y_1),(x_2,y_2))&\mapsto (x_1,y_1)\+ (x_2,y_2):=(x_1+x_2,y_1+y_2-\bar x_1\mu  x_2).
\end{align*}
Then $(R\times R,\+)$ is a group, which we call the {\it Heisenberg group} and denote by $\h$. Equipped with the map
\begin{align*}
\circ:(R\times R)\times R^{\bullet}&\rightarrow R\times R\\
((x,y),a)&\mapsto (x,y)\circ a:=(xa,\bar aya)
\end{align*}
$\h$ becomes an $R^{\bullet}$-module. 
\end{definition}
\begin{remark}
We denote the inverse of an element $(x,y)\in \h$ by $\minus(x,y)$. One checks easily that $\minus(x,y)=(-x,-y-\bar x\mu  x)$ for any $(x,y)\in \h$.
\end{remark}

In order to define the odd-dimensional unitary groups we need the notion of a Hermitian form ring.
\begin{definition}
Let $(R,~\bar{}~,\lambda,\mu )$ be a Hermitian ring. Let $(R,+)$ have the $R^{\bullet}$-module structure defined by $x\circ a = \bar{a}xa$. Define the {\it trace map}
\begin{align*}
tr:\h&\rightarrow R\\
(x,y)&\mapsto \bar x\mu  x+y+\bar y\lambda.
\end{align*}
One checks easily that $\tr$ is a homomorphism of $R^{\bullet}$-modules. Set \[\Delta_{\min}:=\{(0,x-\overline{x}\lambda)\mid x\in R\}\] and \[\Delta_{\max}:=\ker(\tr).\] An $R^{\bullet}$-submodule $\Delta$ of $\h$ lying between $\Delta_{\min}$ and $\Delta_{\max}$ is called an {\it odd form parameter} for $(R,~\bar{}~,\lambda,\mu )$. Since $\Delta_{\min}$ and $\Delta_{\max}$ are $R^{\bullet}$-submodules of $\h$, they are respectively the smallest and largest odd form parameters. A pair $((R,~\bar{}~,\lambda,\mu ),\Delta)$ is called a {\it Hermitian form ring}. We shall usually abbreviate it by $(R,\Delta)$. 
\end{definition}
 
Next we define an odd form ideal of a Hermitian form ring. 
\begin{definition}
Let $(R,\Delta)$ be a Hermitian form ring and $I$ an involution invariant ideal of $R$. Set $J(\Delta):=\{y\in R\mid\exists z\in R:(y,z)\in \Delta\}$ and $\tilde I:=\{x\in R\mid\overline{J(\Delta)}\mu  x\subseteq I\}$. Obviously $\tilde I$ and $J(\Delta)$ are right ideals of $R$ and $I\subseteq \tilde I$. Moreover, set \[\Omega^I_{\min}:=\{(0,x-\bar x\lambda)\mid x\in I\}\+ \Delta\circ I\] and \[\Omega^I_{\max}:=\Delta\cap (\tilde I\times  I).\]
An $R^{\bullet}$-submodule $\Omega$ of $\h$ lying between $\Omega^I_{\min}$ and $\Omega^I_{\max}$ is called a {\it relative odd form parameter of level $I$}. Since $\Omega^I_{\min}$ and $\Omega^I_{\max}$ are $R^{\bullet}$-submodules of $\h$, they are respectively the smallest and the largest relative odd form parameters of level $I$. If $\Omega$ is a relative odd form parameter of level $I$, then $(I,\Omega)$ is called an {\it odd form ideal} of $(R,\Delta)$.
\end{definition}

\begin{definition}\label{defpq}
Let $(R,\Delta)$ be a Hermitian form ring where $R$ is commutative. Let $(I,\Omega)$ be an odd form ideal of $(R,\Delta)$ and $J$ an involution invariant ideal of $R$. We define  $(I,\Omega)\ast J$ as the odd form ideal $(IJ,\Omega_{\min}^{IJ}\+\Omega\circ J)$. Furthermore, we define $(I,\Omega):J$ as the odd form ideal $(I: J,\Omega_{\min}^{I:J}\+\{\alpha\in\Omega_{\max}^{I:J}\mid \alpha\circ J\subseteq \Omega\})$ where $I:J=\{x\in R\mid xJ\subseteq I\}$ is the usual quotient of ideals.
\end{definition}

\subsection{The odd-dimensional unitary group}
Let $(R,\Delta)$ be a Hermitian form ring and $n\in \mathbb{N}$. Set $M:=R^{2n+1}$. We use the following indexing for the elements of the standard basis of $M$: $(e_1,\dots,e_n,e_0,e_{-n},\dots,e_{-1})$.
That means that $e_i$ is the column whose $i$-th coordinate is one and all the other coordinates are zero if $1 \leq i\leq n$, the column whose $(n+1)$-th coordinate is one and all the other coordinates are zero if $i=0$, and the column whose $(2n+2+i)$-th coordinate is one and all the other coordinates are zero if $-n\leq i \leq -1$. If $u\in M$, then we call $(u_1,\dots,u_n,u_{-n},\dots,$ $u_{-1})^t\in R^{2n}$ the {\it hyperbolic part} of $u$ and denote it by $u_{\hb}$. We set $u^*:=\bar u^t$ and $u_{\hb}^*:=\bar u_{\hb}^t$. Moreover, we define the maps
\begin{align*}
B:M\times M&\rightarrow R\\
(u,v)&\mapsto u^*\begin{pmatrix} 0& 0 & \pi\\0&\mu &0\\ \pi\lambda &0 &0 \end{pmatrix}v=\sum\limits_{i=1}^{n}\bar u_i v_{-i}+\bar u_0\mu  v_0+\sum\limits_{i=-n}^{-1}\bar u_{i}\lambda v_{-i}
\end{align*}
and 
\begin{align*}
Q:M&\rightarrow \h\\
u&\mapsto (Q_1(u),Q_2(u)):=(u_0,u_{\hb}^*\begin{pmatrix} 0&\pi\\0&0 \end{pmatrix}u_{\hb})=(u_0,\sum\limits_{i=1}^{n}\bar u_i u_{-i})			
\end{align*}
where $\pi\in M_n(R)$ denotes the matrix with ones on the skew diagonal and zeros elsewhere.

\begin{lemma}
~\\
\vspace{-0.6cm}
\begin{enumerate}[(i)]
\item $B$ is a \textnormal{$\lambda$-Hermitian form}, i.e. $B$ is biadditive, $B(ux,vy)=\bar x B(u,v) y~\forall u,v\in M,x,y\in R$ and $B(u,v)=\overline{B(v,u)}\lambda~\forall u,v\in M$.
\item $Q(ux)=Q(u)\circ x~\forall u\in M, x\in R$, $Q(u+v)\equiv Q(u)\+ Q(v)\+(0,B(u,v))\bmod \Delta_{\min}~\forall u,v\in M$ and $\tr(Q(u))=B(u,u)~\forall u\in M$.
\end{enumerate}
\end{lemma}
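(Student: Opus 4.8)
The plan is to verify every assertion by direct computation from the definitions. Throughout, write $\Phi$ for the $(2n+1)\times(2n+1)$ matrix occurring in the definition of $B$, so that $B(u,v)=u^{*}\Phi v$ for all $u,v\in M$.

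For part (i), biadditivity of $B$ is immediate from the additivity of the involution (hence of $u\mapsto u^{*}=\bar u^{\,t}$) and the biadditivity of $(u,v)\mapsto u^{*}\Phi v$. For the second identity I would use $\overline{u_i x}=\bar x\,\bar u_i$ to get $(ux)^{*}=\bar x\,u^{*}$, whence $B(ux,vy)=\bar x\,u^{*}\Phi v\,y=\bar x\,B(u,v)\,y$ by associativity. For the $\lambda$-Hermitian symmetry I would expand $\overline{B(v,u)}=\overline{v^{*}\Phi u}$ entrywise: applying $\overline{ab}=\bar b\bar a$ and then $\bar{\bar x}=\lambda x\bar\lambda$ yields $\overline{B(v,u)}=\sum_{i,j}\bar u_j\,\overline{\Phi_{ij}}\,\lambda v_i\bar\lambda$, and hence $\overline{B(v,u)}\lambda=\sum_{i,j}\bar u_j\,\overline{\Phi_{ij}}\,\lambda v_i$ since $\bar\lambda\lambda=1$ by Remark~\ref{25}(a). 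Comparing with $B(u,v)=\sum_{i,j}\bar u_i\Phi_{ij}v_j$, the claim reduces to the entrywise identity $\overline{\Phi_{ij}}\,\lambda=\Phi_{ji}$; I would check this block by block, using $\pi^{t}=\pi$ for the two off-diagonal blocks and the relation $\mu=\bar\mu\lambda$ for the middle entry.

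For part (ii), the identity $Q(ux)=Q(u)\circ x$ is immediate: the first coordinate of $Q(ux)$ is $u_0 x$ and the second is $\sum_i\overline{u_i x}\,u_{-i}x=\bar x\big(\sum_i\bar u_i u_{-i}\big)x$, which is exactly $(Q_1(u),Q_2(u))\circ x$. For $\tr(Q(u))=B(u,u)$ I would expand $\tr(Q(u))=\bar u_0\mu u_0+\sum_i\bar u_i u_{-i}+\overline{\sum_i\bar u_i u_{-i}}\,\lambda$, rewrite the last summand as $\sum_i\bar u_{-i}\lambda u_i$ (again via $\bar{\bar x}=\lambda x\bar\lambda$ and $\bar\lambda\lambda=1$), and compare with $B(u,u)$ after reindexing the tail sum. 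For the congruence $Q(u+v)\equiv Q(u)\+ Q(v)\+(0,B(u,v))\bmod\Delta_{\min}$ I would compute both sides using the twisted addition of $\h$; both elements have first coordinate $u_0+v_0$, so their difference in $\h$ has the form $(0,b-b')$, and a short calculation gives $b-b'=\sum_i\bar v_i u_{-i}-\sum_i\bar u_{-i}\lambda v_i$. Putting $x:=\sum_i\bar v_i u_{-i}$ one computes $\bar x\lambda=\sum_i\bar u_{-i}\lambda v_i$, so $b-b'=x-\bar x\lambda$ and therefore $(0,b-b')\in\Delta_{\min}$ by definition.

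I do not expect a genuine obstacle: the statement is a chain of bookkeeping identities. The two places that call for care are keeping $\bar x$ on the correct side throughout (since $~\bar{}~$ is an anti-automorphism rather than an automorphism) and using the twisted group law of the Heisenberg group $\h$ rather than coordinatewise addition when manipulating the last assertion of (ii).
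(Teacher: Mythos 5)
Your proposal is correct, and it is exactly the "straightforward computation" the paper offers as its entire proof: all the identities reduce to the entrywise relation $\overline{\Phi_{ij}}\lambda=\Phi_{ji}$ together with $\bar\lambda\lambda=1$, $\mu=\bar\mu\lambda$, and the twisted group law on $\h$, all of which you invoke at the right places.
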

\begin{proof}
{Straightforward computation.}
\end{proof}

\begin{definition}
The group $\U_{2n+1}(R,\Delta):=$
\begin{align*}
\{\sigma\in \GL_{2n+1}(R)\mid B(\sigma u,\sigma v)=B(u,v)\land Q(\sigma u)\equiv Q(u)\bmod \Delta~\forall u,v\in M\}
\end{align*}
is called the {\it odd-dimensional unitary group}.
\end{definition}


\begin{remark}\label{34}
The groups $\U_{2n+1}(R,\Delta)$ include as special cases the even-dimensional unitary groups $\U_{2n}(R,\Lambda)$ and all classical Chevalley groups. On the other hand, the groups $\U_{2n+1}(R,\Delta)$ are embraced by Petrov's odd unitary groups $\U_{2l}(R,\mathfrak{L})$. For details see \cite[Remark 14(c) and Example 15]{bak-preusser}. 
\end{remark}

\begin{definition}
We define the sets $\Theta_+:=\{1,\dots,n\}$, $\Theta_-:=\{-n,\dots,-1\}$, $\Theta:=\Theta_+\cup\Theta_-\cup\{0\}$ and $\Theta_{\hb}:=\Theta\setminus \{0\}$. Moreover, we define the map 
\begin{align*}\epsilon:\Theta_{\hb} &\rightarrow\{\pm 1\}\\
i&\mapsto\begin{cases} 1, &\mbox{if } i\in\Theta_+, \\ 
-1, & \mbox{if } i\in\Theta_-. \end{cases}
\end{align*}
\end{definition}

\begin{remark}
We will sometimes use expressions of the form $\sum\limits_{i\in A}f(i)$ (resp. $\prod\limits_{i\in A}f(i)$) where $A\subseteq \Theta$ is a subset and $f:A\rightarrow X$ is a map where $X$ is a set with a fixed addition $+$ (resp. multiplication $\cdot$). In such expressions we assume that the order of the summands (resp. factors) corresponds to the strict total order $1\prec \dots\prec n\prec 0 \prec -n \dots \prec -1$ on $\Theta$.
\end{remark}

\begin{lemma}[{\cite[Lemma 17]{bak-preusser}}]\label{36}
Let $\sigma\in \GL_{2n+1}(R)$. Then $\sigma\in \U_{2n+1}(R,\Delta)$ iff Conditions (i) and (ii) below are satisfied. 
\begin{enumerate}[(i)]
\item \begin{align*}
       \sigma'_{ij}&=\lambda^{-(\epsilon(i)+1)/2}\bar\sigma_{-j,-i}\lambda^{(\epsilon(j)+1)/2}~\forall i,j\in\Theta_{\hb},\\
       \mu \sigma'_{0j}&=\bar\sigma_{-j,0}\lambda^{(\epsilon(j)+1)/2}~\forall j\in\Theta_{\hb},\\
       \sigma'_{i0}&=\lambda^{-(\epsilon(i)+1)/2}\bar\sigma_{0,-i}\mu ~\forall i\in\Theta_{\hb} \text { and}\\
       \mu \sigma'_{00}&=\bar\sigma_{00}\mu .
      \end{align*}
\item \begin{align*}
Q(\sigma_{*j})\equiv (\delta_{0j},0)\bmod \Delta ~\forall j\in \Theta.
\end{align*} 
\end{enumerate}
\end{lemma}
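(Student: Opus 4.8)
The plan is to strip the two defining conditions of $\U_{2n+1}(R,\Delta)$ — invariance of $B$ on all of $M\times M$, and invariance of $Q$ modulo $\Delta$ on all of $M$ — down to statements about the standard basis vectors $e_j$, $j\in\Theta$, and then to translate those into the matrix identities (i) and (ii). Let $\phi$ denote the matrix occurring in the definition of $B$, so that $B(u,v)=u^{*}\phi v$ and $B(e_i,e_j)=\phi_{ij}$; explicitly $\phi_{ij}=\delta_{j,-i}$ for $i\in\Theta_+$, $\phi_{0j}=\mu\,\delta_{0j}$, and $\phi_{ij}=\lambda\,\delta_{j,-i}$ for $i\in\Theta_-$. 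Since $B$ is biadditive and satisfies $B(ux,vy)=\bar x\,B(u,v)\,y$, invariance of $B$ under $\sigma$ is equivalent to $B(\sigma e_i,\sigma e_j)=B(e_i,e_j)$ for all $i,j\in\Theta$; and since $B(\sigma e_i,\sigma e_j)=(\sigma_{*i})^{*}\phi\,\sigma_{*j}=(\bar\sigma^{t}\phi\sigma)_{ij}$, this amounts to the single matrix equation $\bar\sigma^{t}\phi\sigma=\phi$.

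Next I would use that, as $\sigma\in\GL_{2n+1}(R)$, the identity $\bar\sigma^{t}\phi\sigma=\phi$ is equivalent to $\phi\sigma^{-1}=\bar\sigma^{t}\phi$ (multiply on the right by $\sigma^{-1}$, and conversely by $\sigma$) — and, crucially, this reformulation needs neither $\phi$ nor $\mu$ to be invertible. Writing out the $(i,j)$-entries of both sides with the explicit form of $\phi$, using the reindexing $i\mapsto-i$ on $\Theta_{\hb}$ and $\bar\lambda=\lambda^{-1}$, one reads off precisely the four families of relations in (i). The only subtlety is that $\mu$ need not be invertible: as soon as the index $0$ intervenes, $\mu$ enters as a non-cancellable factor, which is why a $\mu$ appears in the last three relations of (i) (and why two of them are stated in the unsolved form $\mu\sigma'_{0j}=\cdots$ rather than as formulas for $\sigma'_{0j}$).

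For the $Q$-part I would use the identities $Q(ux)=Q(u)\circ x$ and $Q(u+v)\equiv Q(u)\+Q(v)\+(0,B(u,v))\bmod\Delta_{\min}$ recorded above. Writing $u=\sum_{j\in\Theta}e_ju_j$ in the order $\prec$ and iterating — noting that elements of the form $(0,s)$ with $s\in R$ are central in $\h$, so the correction terms collect cleanly — one obtains
\[Q(u)\equiv\left(\plus_{j\in\Theta}Q(e_j)\circ u_j\right)\+\left(0,\ \sum_{j\prec k}\bar u_j\,B(e_j,e_k)\,u_k\right)\bmod\Delta_{\min},\]
and the same formula with each $e_j$ replaced by $\sigma e_j=\sigma_{*j}$. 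A short computation gives $Q(e_j)=(\delta_{0j},0)$, so (ii) says exactly $Q(\sigma_{*j})\equiv Q(e_j)\bmod\Delta$ for every $j$. Assuming (i) and (ii): (i) gives $B(\sigma e_j,\sigma e_k)=B(e_j,e_k)$, so the correction terms in the two expansions are literally equal; (ii) gives $Q(\sigma e_j)\circ u_j\equiv Q(e_j)\circ u_j\bmod\Delta$ termwise; and since $\Delta$ is $\circ$-stable, contains $\Delta_{\min}$, and is normal in $\h$ (conjugation in $\h$ changes an element only by something in $\Delta_{\min}$), the two expansions agree modulo $\Delta$, i.e. $Q(\sigma u)\equiv Q(u)\bmod\Delta$ for all $u$; together with invariance of $B$ this gives $\sigma\in\U_{2n+1}(R,\Delta)$. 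Conversely, if $\sigma\in\U_{2n+1}(R,\Delta)$ then invariance of $B$ yields (i), and specialising $Q(\sigma u)\equiv Q(u)$ to $u=e_j$ yields (ii).

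I expect the $Q$-step to be the main nuisance: one has to check with care that after the expansion $\circ$ distributes over $\+$, that the successive congruences modulo $\Delta_{\min}$ and modulo $\Delta$ compose correctly, and that the correction terms genuinely cancel once (i) is in force — this is where the inclusions $\Delta_{\min}\subseteq\Delta\subseteq\Delta_{\max}$ and the $R^{\bullet}$-module axioms are used. The $B$-step, by contrast, is routine linear algebra: the passage among $\bar\sigma^{t}\phi\sigma=\phi$, $\phi\sigma^{-1}=\bar\sigma^{t}\phi$ and the entrywise relations (i), with the exponents $\pm(\epsilon(i)+1)/2$ on $\lambda$ simply recording the factor $\lambda$ in the lower-left block of $\phi$.
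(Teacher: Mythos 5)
Your argument is correct and is essentially the proof given in the cited source (\cite[Lemma 17]{bak-preusser}), which this paper does not reproduce: reduce the $B$-condition to the single matrix identity $\bar\sigma^{t}\phi\sigma=\phi$, rewrite it as $\phi\sigma^{-1}=\bar\sigma^{t}\phi$ to avoid inverting $\mu$, and read off (i) entrywise; then expand $Q(\sigma u)$ over the columns $\sigma_{*j}$ via the quasi-quadratic identity, using that $(0,s)$ is central in $\h$, that $\Delta$ is a normal $\circ$-stable subgroup containing $\Delta_{\min}$, and that (i) makes the $B$-correction terms match. No gaps.
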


\subsection{The polarity map}
\begin{definition}
The map
\begin{align*}
\widetilde{}: M&\longrightarrow M^*\\
u&\longmapsto \begin{pmatrix} \bar u_{-1}\lambda&\dots&\bar u_{-n}\lambda&\bar u_0\mu&\bar u_{n}&\dots&\bar u_1\end{pmatrix}
\end{align*}
where $M^*={}^{2n+1}\!R$ is called the {\it polarity map}. Clearly $~\widetilde{}~$ is {\it involutary linear}, i.e. $\widetilde{u+v}=\tilde u+\tilde v$ and $\widetilde{ux}=\bar x\tilde u$ for any $u,v\in M$ and $x\in R$.
\end{definition}
\begin{lemma}\label{38}
If $\sigma\in \U_{2n+1}(R,\Delta)$ and $u\in M$, then $\widetilde{\sigma u}=\tilde u\sigma^{-1}$.
\end{lemma}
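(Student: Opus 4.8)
The statement to prove is Lemma \ref{38}: if $\sigma\in \U_{2n+1}(R,\Delta)$ and $u\in M$, then $\widetilde{\sigma u}=\tilde u\sigma^{-1}$.

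The plan is to verify this entrywise using the explicit formula for the polarity map together with the characterization of unitary matrices from Lemma \ref{36}(i). Both sides are row vectors in $M^*={}^{2n+1}\!R$, so it suffices to compare the $k$-th coordinate for each $k\in\Theta$. First I would observe that the polarity map can be written compactly: $\tilde u = u^*\Phi$ where $\Phi$ is the matrix $\begin{pmatrix} 0& 0 & \pi\\0&\mu &0\\ \pi\lambda &0 &0 \end{pmatrix}$ appearing in the definition of $B$, so that $B(u,v)=u^*\Phi v=\tilde u v$ and $\widetilde{\sigma u}=(\sigma u)^*\Phi=u^*\sigma^*\Phi$. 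Thus the claim $\widetilde{\sigma u}=\tilde u\sigma^{-1}=u^*\Phi\sigma^{-1}$ for all $u$ is equivalent to the matrix identity $\sigma^*\Phi=\Phi\sigma^{-1}$, i.e. $\Phi^{-1}\sigma^*\Phi=\sigma^{-1}$.

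The core computation is then to check $\Phi^{-1}\sigma^*\Phi=\sigma^{-1}$, which is a reformulation of the relations in Lemma \ref{36}(i). Concretely, I would compute the $(i,j)$ entry of $\Phi^{-1}\sigma^*\Phi$ for $i,j$ ranging over $\Theta$, splitting into the cases $i,j\in\Theta_{\hb}$; $i=0$, $j\in\Theta_{\hb}$; $i\in\Theta_{\hb}$, $j=0$; and $i=j=0$. Since $\Phi$ is (up to the block structure) anti-diagonal with blocks $\pi$, $\mu$, $\pi\lambda$, conjugating $\sigma^*$ by $\Phi$ has the effect of sending index $i$ to $-i$ (the action of $\pi$) and inserting the appropriate powers of $\lambda$ and factors of $\mu$. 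Matching these four cases against the four displayed equations in Lemma \ref{36}(i) — which express $\sigma'_{ij}$, $\mu\sigma'_{0j}$, $\sigma'_{i0}$, $\mu\sigma'_{00}$ in terms of barred entries of $\sigma$ — shows exactly that $(\Phi^{-1}\sigma^*\Phi)_{ij}=\sigma'_{ij}$ in all cases. Care is needed with the factors $\lambda^{\pm(\epsilon(i)+1)/2}$ and with the fact that $R$ need not be commutative, so one must keep track of left versus right multiplication and use $\bar\lambda=\lambda^{-1}$ from Remark \ref{25}(a) and $\mu=\bar\mu\lambda$; but no genuine difficulty arises.

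The main obstacle is purely bookkeeping: getting the anti-diagonal index reversal and the various powers of $\lambda$ and occurrences of $\mu$ to line up precisely with the normalization chosen in Lemma \ref{36}. There is no conceptual subtlety — the lemma is essentially the assertion that the matrix of the form $B$ intertwines $\sigma$ and its adjoint in the expected way, so once the identification $\tilde u = u^*\Phi$ is made, everything reduces to the already-established entrywise description of $\U_{2n+1}(R,\Delta)$. Alternatively, and perhaps more cleanly, one can avoid entries entirely: for all $u,v\in M$ we have $\widetilde{\sigma u}\,v = B(\sigma u, v) = B(u,\sigma^{-1}v) = \tilde u\,\sigma^{-1}v$, where the middle equality uses $B(\sigma u,\sigma(\sigma^{-1}v))=B(u,\sigma^{-1}v)$ from $\sigma\in\U_{2n+1}(R,\Delta)$; since this holds for every $v$ and $B$ is nondegenerate (equivalently, the pairing $(w,v)\mapsto w v$ between $M^*$ and $M$ is nondegenerate), we conclude $\widetilde{\sigma u}=\tilde u\sigma^{-1}$. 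I would present this short argument as the main proof and relegate the entrywise verification to a remark if the explicit identity $\sigma^*\Phi=\Phi\sigma^{-1}$ is needed elsewhere.
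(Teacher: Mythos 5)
Your proposal is correct. The entrywise verification you sketch (rewriting the polarity map as $\tilde u=u^*\Phi$ and reducing the claim to the matrix identity $\sigma^*\Phi=\Phi\sigma^{-1}$, which is exactly the content of Lemma \ref{36}(i)) is precisely the route the paper takes — its proof is the one-line ``Follows from Lemma \ref{36}.'' Your preferred short argument is genuinely different and, I would say, cleaner: from $\tilde u v=B(u,v)$ one gets $\widetilde{\sigma u}\,v=B(\sigma u,v)=B(u,\sigma^{-1}v)=\tilde u\sigma^{-1}v$ for all $v\in M$, and since a row vector $w$ with $wv=0$ for all $v$ vanishes (take $v=e_i$), the conclusion follows. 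This version uses only the $B$-invariance clause in the definition of $\U_{2n+1}(R,\Delta)$ and none of the $\lambda$- and $\mu$-bookkeeping of Lemma \ref{36}, so it is both shorter and more robust; what the paper's route buys in exchange is the explicit identity $\sigma^*\Phi=\Phi\sigma^{-1}$, i.e.\ the precise entrywise formulas for $\sigma^{-1}$, which are needed repeatedly elsewhere in the paper anyway. Both arguments are sound.
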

\begin{proof}
Follows from Lemma \ref{36}.
\end{proof}

\subsection{The elementary subgroup}
We introduce the following notation. Let $(R,~\b{}~,\b{$\lambda$},\b{$\mu $})$ be the Hermitian ring defined in Remark \ref{25}(b) and $\h^{-1}$ the corresponding Heisenberg group. Note that the underlying set of both $\h$ and $\h^{-1}$ is $R\times R$. We denote the group operation (resp. scalar multiplication) of $\h$ by $\+_1$ (resp. $\circ_1$) and the group operation (resp. scalar multiplication) of $\h^{-1}$ by $\+_{-1}$ (resp. $\circ_{-1}$). Furthermore, we set $\Delta^1:=\Delta$ and $\Delta^{-1}:=\{(x,y)\in R\times R\mid(x,\bar y)\in \Delta\}$. One checks easily that $((R,~\b{}~,\b{$\lambda$},\b{$\mu $}),\Delta^{-1})$ is a Hermitian form ring. Analogously, if $(I,\Omega)$ is an odd form ideal of $(R,\Delta)$, we set $\Omega^1:=\Omega$ and $\Omega^{-1}:=\{(x,y)\in R\times R\mid(x,\bar y)\in \Omega\}$. One checks easily that $(I,\Omega^{-1})$ is an odd form ideal of $(R,\Delta^{-1})$. 
 
If $i,j\in \Theta$, let $e^{ij}$ denote the matrix in $M_{2n+1}(R)$ with $1$ in the $(i,j)$-th position and $0$ in all other positions.
\begin{definition}
If $i,j\in \Theta_{\hb}$, $i\neq\pm j$ and $x\in R$, the element  \[T_{ij}(x):=e+xe^{ij}-\lambda^{(\epsilon(j)-1)/2}\bar x\lambda^{(1-\epsilon(i))/2}e^{-j,-i}\] of $\U_{2n+1}(R,\Delta)$ is called an {\it (elementary) short root transvection}. 
If $i\in \Theta_{\hb}$ and $(x,y)\in \Delta^{-\epsilon(i)}$, the element \[T_{i}(x,y):=e+xe^{0,-i}-\lambda^{-(1+\epsilon(i))/2}\bar x\mu e^{i0}+ye^{i,-i}\] of $\U_{2n+1}(R,\Delta)$ is called an {\it (elementary) extra short root transvection}. The extra short root transvections of the kind \[T_{i}(0,y)=e+ye^{i,-i}\] are called {\it (elementary) long root transvections}. If an element of $\U_{2n+1}(R,\Delta)$ is a short or extra short root transvection, then it is called an {\it elementary transvection}. The subgroup of $\U_{2n+1}(R,\Delta)$ generated by all elementary transvections is called the {\it elementary subgroup} and is denoted by $\EU_{2n+1}(R,\Delta)$. 
\end{definition}

\begin{lemma}[{\cite[Lemma 20]{bak-preusser}}]\label{39}
The following relations hold for elementary transvections.
\begin{align*}
&T_{ij}(x)=T_{-j,-i}(-\lambda^{(\epsilon(j)-1)/2}\bar x\lambda^{(1-\epsilon(i))/2}), \tag{S1}\\
&T_{ij}(x)T_{ij}(y)=T_{ij}(x+y), \tag{S2}\\
&[T_{ij}(x),T_{kl}(y)]=e \text{ if } k\neq j,-i \text{ and } l\neq i,-j, \tag{S3}\\
&[T_{ij}(x),T_{jk}(y)]=T_{ik}(xy) \text{ if } i\neq\pm k, \tag{S4}\\
&[T_{ij}(x),T_{j,-i}(y)]=T_{i}(0,xy-\lambda^{(-1-\epsilon(i))/2}\bar y\bar x\lambda^{(1-\epsilon(i))/2}), \tag{S5}\\
&T_{i}(x_1,y_1)T_{i}(x_2,y_2)=T_{i}((x_1,y_1)\+_{-\epsilon(i)}(x_2,y_2)), \tag{E1}\\
&[T_{i}(x_1,y_1),T_{j}(x_2,y_2)]=T_{i,-j}(-\lambda^{-(1+\epsilon(i))/2}\bar x_1\mu x_2) \text{ if } i\neq\pm j, \tag{E2}\\
&[T_{i}(x_1,y_1),T_{i}(x_2,y_2)]=T_{i}(0,-\lambda^{-(1+\epsilon(i))/2}(\bar x_1\mu x_2-\bar x_2\mu x_1)), \tag{E3}\\
&[T_{ij}(x),T_{k}(y,z)]=e \text{ if } k\neq j,-i \text{ and} \tag{SE1}\\
&[T_{ij}(x),T_{j}(y,z)]=T_{j,-i}(z\lambda^{(\epsilon(j)-1)/2}\bar x\lambda^{(1-\epsilon(i))/2})\cdot\\
&\hspace{3.4cm}\cdot T_{i}(y\lambda^{(\epsilon(j)-1)/2}\bar x\lambda^{(1-\epsilon(i))/2},xz\lambda^{(\epsilon(j)-1)/2}\bar x\lambda^{(1-\epsilon(i))/2}).\tag{SE2}\\
\end{align*}
\end{lemma}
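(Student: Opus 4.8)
The plan is to verify each of the relations (S1)--(SE2) by direct matrix computation, using only the multiplication rule $e^{ij}e^{kl}=\delta_{jk}e^{il}$ for matrix units, the symmetry axiom $\bar{\bar x}=\lambda x\bar\lambda$ together with $\bar\lambda=\lambda^{-1}$ (Remark \ref{25}), the equality $\epsilon(-i)=-\epsilon(i)$, and the fact that $\Delta^{-\epsilon(i)}$ is an $R^{\bullet}$-submodule of $\h^{-\epsilon(i)}$, hence closed under $\+_{-\epsilon(i)}$. The key structural point is that every elementary transvection is the identity $e$ perturbed by a nilpotent matrix $A$ of rank at most three with $A^{3}=0$, so $a^{-1}=e-A+A^{2}$ and $ab=e+A+B+AB$, and identifying a commutator $[a,b]=aba^{-1}b^{-1}$ amounts to tracking which products of matrix units survive.

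First I would handle the relations internal to a single root subspace. Relation (S1) asserts that $T_{ij}(x)$ and $T_{-j,-i}(-\lambda^{(\epsilon(j)-1)/2}\bar x\lambda^{(1-\epsilon(i))/2})$ are literally the same matrix; this follows by applying the index flip $i\mapsto -j,\ j\mapsto -i$ to the two summands of the latter and collapsing the resulting $\lambda$-powers with $\bar\lambda=\lambda^{-1}$ and $\bar{\bar x}=\lambda x\bar\lambda$. For (S2) and (E1) one multiplies out the two factors, which are supported on the same positions; since $i\neq\pm j$ almost every mixed term vanishes, and in (E1) the unique surviving one, $(-\lambda^{-(1+\epsilon(i))/2}\bar x_{1}\mu)e^{i0}\cdot x_{2}e^{0,-i}=-\lambda^{-(1+\epsilon(i))/2}\bar x_{1}\mu x_{2}\,e^{i,-i}$, is exactly the correction term in the group law of $\h^{-\epsilon(i)}$, so the product equals $T_{i}\big((x_{1},y_{1})\+_{-\epsilon(i)}(x_{2},y_{2})\big)$, which is again an extra short root transvection because $\Delta^{-\epsilon(i)}$ is closed under $\+_{-\epsilon(i)}$. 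The ``disjoint support'' relations (S3) and (SE1) are immediate: every product of a matrix unit of the first transvection with one of the second is zero, so $AB=BA=0$ and the commutator is $e$.

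What remains are the genuine ``chain'' commutators (S4), (S5), (E2), (E3) and (SE2), which I would obtain by expanding $[a,b]=(e+A)(e+B)(e-A+A^{2})(e-B+B^{2})$ and checking that nearly every term cancels or vanishes, leaving exactly the matrix units that constitute the short root transvection $T_{ik}$ in (S4), the long root transvection in (S5) and (E3), the short root transvection $T_{i,-j}$ in (E2), and the product of a short and an extra short root transvection in (SE2). Once the order of the factors is respected, the precise coefficients---the signs and the positions of the powers $\lambda^{\pm(\epsilon(\cdot)\pm1)/2}$---are forced by the two $\lambda$-identities above. Keeping this bookkeeping straight, rather than any conceptual difficulty, is the main obstacle; and since, as indicated in the statement, these relations are \cite[Lemma~20]{bak-preusser}, one may alternatively simply cite them.
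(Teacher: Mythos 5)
Your verification plan is correct: the nilpotency bookkeeping ($A^{2}=0$ for short root perturbations, $A^{3}=0$ with $A^{2}$ supported on $e^{i,-i}$ for extra short ones), the disjoint-support argument for (S3) and (SE1), and the identification of the single surviving cross term $e^{i0}\cdot e^{0,-i}$ with the Heisenberg correction in (E1) are exactly what a full proof requires. The paper itself supplies no argument for this lemma --- it is imported verbatim as \cite[Lemma 20]{bak-preusser} --- and the proof in that reference is precisely the direct matrix-unit computation you outline, so your proposal matches the intended justification.
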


\subsection{Relative subgroups}
In this subsection $(I,\Omega)$ denotes an odd form ideal of $(R,\Delta)$.
\begin{definition}
A short root transvection $T_{ij}(x)$ is called {\it $(I,\Omega)$-elementary} if $x\in I$. An extra short root transvection $T_{i}(x,y)$ is called {\it $(I,\Omega)$-elementary} if $(x,y)\in \Omega^{-\epsilon(i)}$. 
The subgroup $\EU_{2n+1}(I,\Omega)$ of $\EU_{2n+1}(R,\Delta)$ generated by the $(I,\Omega)$-elementary trans\-vections is called the {\it preelementary subgroup of level $(I,\Omega)$}. Its normal closure $\EU_{2n+1}((R,\Delta),(I,\Omega))$ in $\EU_{2n+1}(R,\Delta)$ is called the {\it elementary subgroup of level $(I,\Omega)$}.
\end{definition}

If $\sigma\in M_{2n+1}(R)$, we call the matrix $(\sigma_{ij})_{i,j\in\Theta_{\hb}}\in M_{2n}(R)$ the {\it hyperbolic part} of $\sigma$ and denote it by $\sigma_{\hb}$. Furthermore, we define the submodule $M(R,\Delta):=\{u\in M\mid u_0\in J(\Delta)\}$ of $M$. 
\begin{definition}
The subgroup $\U_{2n+1}((R,\Delta),(I,\Omega)):=$
\[\{\sigma\in \U_{2n+1}(R,\Delta)\mid\sigma_{\hb}\equiv e_{\hb}\bmod  I\text{ and }Q(\sigma u)\equiv Q(u)\bmod \Omega~\forall u\in M(R,\Delta)\}\]
of $\U_{2n+1}(R,\Delta)$ is called {\it the principal congruence subgroup of level $(I,\Omega)$}.
\end{definition}

\begin{lemma}[{\cite[Lemma 28]{bak-preusser}}]\label{46}
Let $\sigma\in \U_{2n+1}(R,\Delta)$. Then $\sigma\in \U_{2n+1}((R,\Delta),(I,\Omega))$ iff Conditions (i) and (ii) below are satisfied. 
\begin{enumerate}[(i)]
\item $\sigma_{\hb}\equiv e_{\hb}\bmod  I$.
\item $Q(\sigma_{*j})\in\Omega~\forall j\in \Theta_{\hb}$ and $(Q(\sigma_{*0})\minus (1,0))\circ a\in\Omega~\forall a\in J(\Delta)$.
\end{enumerate}
\end{lemma}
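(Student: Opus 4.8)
The statement to be proved is Lemma~\ref{46}, which characterises membership in the principal congruence subgroup $\U_{2n+1}((R,\Delta),(I,\Omega))$ by two concrete conditions. The plan is to unwind the definition of $\U_{2n+1}((R,\Delta),(I,\Omega))$ and show it is equivalent to Conditions~(i) and~(ii). The forward direction is essentially immediate: if $\sigma$ lies in the principal congruence subgroup then by definition $\sigma_{\hb}\equiv e_{\hb}\bmod I$, which is Condition~(i); and $Q(\sigma u)\equiv Q(u)\bmod\Omega$ for all $u\in M(R,\Delta)$. Applying the latter with $u=e_j$ for $j\in\Theta_{\hb}$ gives $Q(\sigma_{*j})\equiv Q(e_j)=(0,0)\bmod\Omega$, i.e.\ $Q(\sigma_{*j})\in\Omega$; applying it with $u=e_0\circ a$ for $a\in J(\Delta)$ (note $e_0\circ a$ has $0$-coordinate $a\in J(\Delta)$, so it lies in $M(R,\Delta)$) and using $Q(ux)=Q(u)\circ x$ gives $Q(\sigma_{*0})\circ a\equiv Q(e_0)\circ a=(1,0)\circ a\bmod\Omega$, whence $(Q(\sigma_{*0})\minus(1,0))\circ a\in\Omega$. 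That yields Condition~(ii).

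The substantive direction is the converse. Assume $\sigma\in\U_{2n+1}(R,\Delta)$ satisfies (i) and (ii); I must show $Q(\sigma u)\equiv Q(u)\bmod\Omega$ for every $u\in M(R,\Delta)$. First I would reduce the statement on $B$: since $\sigma\in\U_{2n+1}(R,\Delta)$ it already preserves $B$ exactly, so nothing is needed there, and also $\sigma_{\hb}\equiv e_{\hb}\bmod I$ is given. The core is the quadratic form condition. Writing $u=\sum_{j\in\Theta}e_j u_j$ and using the near-additivity $Q(u+v)\equiv Q(u)\+ Q(v)\+(0,B(u,v))\bmod\Delta_{\min}$ together with $Q(ux)=Q(u)\circ x$, I would expand both $Q(\sigma u)$ and $Q(u)$ as ordered sums of the contributions $Q(\sigma_{*j})\circ u_j$ (resp.\ $Q(e_j)\circ u_j$) plus cross terms $(0,B(\sigma_{*j}u_j,\sigma_{*l}u_l))$ (resp.\ the same with $e$'s), all modulo $\Delta_{\min}\subseteq\Omega$. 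Because $\sigma$ preserves $B$, the cross terms match; because $\Omega$ is $\+$-closed and $\circ$-closed, it suffices to control each single-column contribution $(Q(\sigma_{*j})\minus Q(e_j))\circ u_j$ modulo $\Omega$.

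For $j\in\Theta_{\hb}$ this is handled by hypothesis~(ii): $Q(\sigma_{*j})\in\Omega$ and $Q(e_j)=(0,0)$. For $j=0$ we have $u_0\in J(\Delta)$ since $u\in M(R,\Delta)$, so $(Q(\sigma_{*0})\minus(1,0))\circ u_0\in\Omega$, again by~(ii). One must be slightly careful with the bookkeeping of the Heisenberg group operation (it is not abelian, and the ordering convention $1\prec\dots\prec n\prec 0\prec -n\prec\dots\prec -1$ matters), and one must check that the discrepancy between $Q(\sigma u)$ and the ordered $\+$-sum of column contributions is accounted for by cross terms of the form $(0,B(\cdot,\cdot))$ which, as noted, are $\sigma$-invariant and hence cancel in the difference $Q(\sigma u)\minus Q(u)$. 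The main obstacle is thus purely organisational: carrying out the multilinear expansion of $Q$ on a general vector with the correct (non-commutative, order-sensitive) Heisenberg arithmetic and verifying that every error term is absorbed either into $\Delta_{\min}\subseteq\Omega$ or into a $B$-cross-term that is preserved by $\sigma$. No deep input beyond the displayed properties of $B$ and $Q$, the module axioms for $\Omega$, and the inclusion $\Delta_{\min}\subseteq\Omega^I_{\min}\subseteq\Omega$ is required; this is why the earlier lemma records it as a routine verification, and I would present the converse as a short computation along exactly these lines.
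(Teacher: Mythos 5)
The paper itself does not prove this lemma; it is quoted from \cite[Lemma~28]{bak-preusser}, so your attempt can only be judged on its own merits. Your forward direction is fine: specialising $Q(\sigma u)\equiv Q(u)\bmod\Omega$ to $u=e_j$ ($j\in\Theta_{\hb}$) and $u=e_0a$ ($a\in J(\Delta)$) does give Condition~(ii), and the identity $(\alpha\minus\beta)\circ a=\alpha\circ a\minus\beta\circ a$ you implicitly use is a consequence of the $R^{\bullet}$-module axioms. The strategy for the converse (expand $Q$ column by column, match the $B$-cross-terms using $B(\sigma u,\sigma v)=B(u,v)$, and control the single-column contributions by (ii)) is also the right one.

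However, there is a genuine gap in the converse: you repeatedly absorb error terms into ``$\Delta_{\min}\subseteq\Omega^I_{\min}\subseteq\Omega$'', and that inclusion is \emph{false} in general. By definition $\Omega^I_{\min}=\{(0,x-\bar x\lambda)\mid x\in I\}\+\Delta\circ I$ and $\Omega\subseteq\Omega^I_{\max}=\Delta\cap(\tilde I\times I)$, so $(0,x-\bar x\lambda)$ lies in $\Omega$ only when $x-\bar x\lambda\in I$; already for $I=0$ one has $\Omega=0$ while $\Delta_{\min}$ is typically nonzero. Consequently the $\bmod\Delta_{\min}$ discrepancies produced by the near-additivity of $Q$, as well as the commutators that arise when you reorder factors in the non-abelian Heisenberg group, cannot simply be discarded: you must show that each such term is of the form $(0,w-\bar w\lambda)$ with $w\in I$. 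This is where the hypotheses actually enter a second time: the expansion errors for $Q(\sigma u)$ involve products $\bar\sigma_{il}\sigma_{-i,j}$ with $j\neq l$, at least one factor of which is an off-diagonal hyperbolic entry lying in $I$ by Condition~(i) (and for $j=0$ or $l=0$ the other hyperbolic factor is off-diagonal, hence in $I$); and the reordering commutators involve $\bar u_0\mu\sigma_{0j}u_j$, which lies in $I$ because $Q(\sigma_{*j})\in\Omega\subseteq\Omega^I_{\max}$ forces $\sigma_{0j}\in\tilde I=\{x\mid\overline{J(\Delta)}\mu x\subseteq I\}$ and $u_0\in J(\Delta)$. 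Without these verifications the converse does not close, so as written the proof is incomplete, although it is repairable along the lines just indicated.
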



\begin{definition}
The subgroup $\NU_{2n+1}((R,\Delta),(I,\Omega)):=$
\[\Normaliser_{\U_{2n+1}(R,\Delta)}(\U_{2n+1}((R,\Delta),(I,\Omega)))\]
of $\U_{2n+1}(R,\Delta)$ is called the {\it normalised principal congruence subgroup of level $(I,\Omega)$}.
\end{definition}


\begin{definition}
The subgroup $\CU_{2n+1}((R,\Delta),(I,\Omega)):=$
\[ \{\sigma\in \NU_{2n+1}((R,\Delta),(I,\Omega))\mid [\sigma,\EU_{2n+1}(R,\Delta)]\leq \U_{2n+1}((R,\Delta),(I,\Omega))\}\]
of $\U_{2n+1}(R,\Delta)$ is called the {\it full congruence subgroup of level $(I,\Omega)$}.
\end{definition}


\section{The lower and upper level of a subgroup of an odd-dimensional unitary group}
In this section $n$ denotes an integer greater than or equal to $3$ and $(R,\Delta)$ a Hermitian form ring where $R$ is commutative. We will define the lower and the upper level of a subgroup of $\U_{2n+1}(R,\Delta)$.

\begin{definition}\label{defll}
Let $H$ be a subgroup of $\U_{2n+1}(R,\Delta)$. Set
\begin{align*}
I:=\{x\in R\mid &\hspace{0.5cm}T_{ij}(xr)^{\tau}\in H~\forall i,j\in\Theta_{hb},i\neq\pm j,r\in R,\tau\in \EU_{2n+1}(R,\Delta)\\
&\land T_{i}(0,xr-\overline{xr}\lambda^{-\epsilon(i)})^{\tau}\in H~\forall i\in\Theta_{hb},r\in R,\tau\in \EU_{2n+1}(R,\Delta)\\
&\land T_{i}(\alpha\circ xr)^{\tau}\in H~\forall i\in\Theta_{hb},\alpha\in \Delta^{-\epsilon(i)},r\in R,\tau\in \EU_{2n+1}(R,\Delta)\\
&\land T_{i}(\alpha\circ \bar xr)^{\tau}\in H~\forall i\in\Theta_{hb},\alpha\in \Delta^{-\epsilon(i)},r\in R,\tau\in \EU_{2n+1}(R,\Delta)\}
\end{align*}
and 
\begin{align*}
\Omega:=\Omega_{\min}^I\+\{(y,z)\in \Omega_{\max}^I\mid&\hspace{0.5cm} T_i((y,z)\circ r)^{\tau}\in H~\forall i\in\Theta_{-},r\in R,\tau\in \EU_{2n+1}(R,\Delta)\\
&\land T_i((y,\bar z)\circ r)^{\tau}\in H~\forall i\in\Theta_{+},r\in R,\tau\in \EU_{2n+1}(R,\Delta)\}.
\end{align*}
The odd form ideal $L(H):=(I,\Omega)$ is called the \emph{lower level} of $H$.
\end{definition}

If $H$ is a subgroup of $\U_{2n+1}(R,\Delta)$, then clearly $\EU_{2n+1}((R,\Delta),L(H))\leq H$ and $L(H)$ is the greatest odd form ideal with this property.

\begin{definition}\label{deful1}
Let $H$ be a subgroup of $\U_{2n+1}(R,\Delta)$. Set
\[Y:=\{\sigma_{ij},\sigma_{ii}-\sigma_{jj}, \sigma_{i0}J(\Delta),\overline{J(\Delta)}\mu\sigma_{0j},\overline{J(\Delta)}\mu(\sigma_{00}-\sigma_{jj})J(\Delta)\mid \sigma\in H, i,j\in \Theta_{\hb},i\neq j\}\]
and 
\[Z:=\{Q(\sigma_{*j}),(Q(\sigma_{*0})\minus(1,0))\circ y\+(y,z)\minus (y,z)\circ \sigma_{ii}\mid \sigma\in H, i,j\in \Theta_{\hb}, (y,z)\in \Delta
\}.\]
Let $I$ be the involution invariant ideal generated by $Y$ (i.e. the ideal generated by $Y\cup \bar Y$) and set $\Omega:=\Omega_{\min}^I\+
Z\circ R$. The odd form ideal $U(H):=(I,\Omega)$ is called the \emph{upper level} of $H$.
\end{definition}

We will show that if $H$ is a subgroup of $\U_{2n+1}(R,\Delta)$, then $H\leq\CU_{2n+1}((R,\Delta),$ $U(H))$ and $U(H)$ is the smallest odd form ideal with this property.

\begin{lemma}\label{lemul1}
Let $\sigma\in\U_{2n+1}(R,\Delta)$ and $(I,\Omega)$ be an odd form ideal of $(R,\Delta)$. Then $\sigma\in\NU_{2n+1}((R,\Delta),(I,\Omega))$ iff $(Q(\sigma_{*0})\minus(1,0))\circ x\in \Omega$ and $(Q(\sigma'_{*0})\minus(1,0))\circ x\in \Omega$ for any $x\in J(\Omega)$.
\end{lemma}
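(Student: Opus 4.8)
The plan is to unwind the definition of $\NU_{2n+1}((R,\Delta),(I,\Omega))$ as the normaliser of the principal congruence subgroup $\U_{2n+1}((R,\Delta),(I,\Omega))$: so $\sigma$ lies in it exactly when $\sigma\,\U_{2n+1}((R,\Delta),(I,\Omega))\,\sigma^{-1}\subseteq\U_{2n+1}((R,\Delta),(I,\Omega))$ and the mirror inclusion with $\sigma^{-1}$ in place of $\sigma$ both hold. Passing from $\sigma$ to $\sigma^{-1}$ interchanges $\sigma_{*0}$ with $\sigma'_{*0}$, hence also interchanges the two displayed membership conditions, so it is enough to prove that the one-sided inclusion $\sigma\,\U_{2n+1}((R,\Delta),(I,\Omega))\,\sigma^{-1}\subseteq\U_{2n+1}((R,\Delta),(I,\Omega))$ is equivalent to the displayed condition on $\sigma_{*0}$, the condition on $\sigma'_{*0}$ then following by symmetry. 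Membership in $\U_{2n+1}((R,\Delta),(I,\Omega))$ is checked throughout via Lemma \ref{46}: for $\tau$ in that subgroup and $\rho:=\sigma\tau\sigma^{-1}$ one must show $\rho_{\hb}\equiv e_{\hb}\bmod I$, $Q(\rho_{*j})\in\Omega$ for $j\in\Theta_{\hb}$, and $(Q(\rho_{*0})\minus(1,0))\circ a\in\Omega$ for all $a\in J(\Delta)$.

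For ``$\Leftarrow$'' I would take an arbitrary $\tau\in\U_{2n+1}((R,\Delta),(I,\Omega))$, write $\rho=e+\sigma(\tau-e)\sigma^{-1}$, and expand. The hyperbolic entries of $\rho-e$ split into the part routed entirely through $\Theta_{\hb}$ — in $I$ at once because the hyperbolic part of $\tau-e$ has entries in $I$ — and the part routed through the index-$0$ row or column of $\tau-e$, whose entries $\tau_{0l}$, $\tau_{00}-1$ satisfy $\tau_{0l}\in J(\Omega)$ and $(\tau_{00}-1)J(\Delta)\subseteq J(\Omega)$ by Lemma \ref{46}; pushing the resulting products of $\sigma$- and $\sigma^{-1}$-entries into $I$ then uses the displayed $Q$-condition together with the unitarity relations of Lemma \ref{36}. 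For the form conditions I would compute $Q(\rho_{*j})=Q(\sigma\tau\sigma'_{*j})$ by iterating $Q(u+v)\equiv Q(u)\+Q(v)\+(0,B(u,v))\bmod\Delta_{\min}$, $B(\sigma u,\sigma v)=B(u,v)$ and $Q(\sigma u)\equiv Q(u)\bmod\Delta$, expressing $Q(\rho_{*j})$ as a $\+$-sum of: values $Q(\tau_{*k})$ and $(Q(\tau_{*0})\minus(1,0))\circ b$, which lie in $\Omega$ because $\tau\in\U_{2n+1}((R,\Delta),(I,\Omega))$; values $Q(\sigma_{*k})$ or $Q(\sigma'_{*k})$ with $k\in\Theta_{\hb}$ — which lie in $\Delta$ by Lemma \ref{36} — multiplied by coefficients in $I$, hence in $\Delta\circ I\subseteq\Omega_{\min}^I$, together with $\Delta_{\min}$-errors that also acquire an $I$-coefficient and so land in $\Omega_{\min}^I$; and one residual term of the exact shape $(Q(\sigma_{*0})\minus(1,0))\circ x$, in which $\sigma_{*0}=\sigma e_0$ enters only through the $e_0$-component of $(\tau-e)\sigma'_{*j}$, so that $x\in J(\Omega)$ (for $j=0$ one uses the built-in $\circ a$ with $a\in J(\Delta)$ to bring $\tau_{00}-1$ into $J(\Omega)$). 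By hypothesis this last term lies in $\Omega$, whence $\rho\in\U_{2n+1}((R,\Delta),(I,\Omega))$.

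For ``$\Rightarrow$'' I would run this computation backwards on a well-chosen $\tau$. Given $x\in J(\Omega)$, choose $z$ with $(x,z)\in\Omega$ and take $\tau=T_i(x,z)$ for a suitable $i\in\Theta_-$; then $(x,z)\in\Omega^{-\epsilon(i)}$, so $\tau\in\U_{2n+1}((R,\Delta),(I,\Omega))$ by a direct check via Lemma \ref{46}, and commutativity of $R$ gives $\rho=\sigma\tau\sigma^{-1}=e+x\,\sigma_{*0}\sigma'_{-i,*}-\lambda^{-(1+\epsilon(i))/2}\bar x\mu\,\sigma_{*i}\sigma'_{0,*}+z\,\sigma_{*i}\sigma'_{-i,*}$. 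Imposing $(Q(\rho_{*0})\minus(1,0))\circ a\in\Omega$ for $a\in J(\Delta)$ and cancelling the summands already known to lie in $\Omega$ (those built from $Q(\sigma_{*i})\in\Delta$ against $I$-coefficients) leaves exactly the requirement that $(Q(\sigma_{*0})\minus(1,0))\circ x'$ lie in $\Omega$ for the scalars $x'$ produced this way; varying $i$, composing transvections if necessary, and using the unitarity relations of Lemma \ref{36} to identify which entries of $\sigma^{-1}$ are available, one checks these $x'$ run over all of $J(\Omega)$. Applying the same to $\sigma^{-1}$ yields the companion condition on $\sigma'_{*0}$.

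The main obstacle, and the real content of the lemma, is the index-$0$ coupling: unlike in the even-dimensional case one does \emph{not} have $(\sigma\tau\sigma^{-1})_{\hb}=\sigma_{\hb}\tau_{\hb}\sigma_{\hb}^{-1}$, so neither condition (i) nor condition (ii) of Lemma \ref{46} is preserved under conjugation for free, and every estimate has to be fed the unitarity identities of Lemma \ref{36} relating the $0$-row and $0$-column entries of $\sigma$ and $\sigma^{-1}$ to $\overline{J(\Delta)}\mu(\,\cdot\,)$-type expressions. The delicate point within this is to show that the obstruction present in the conjugate is carried by the \emph{single} column $\sigma_{*0}$ (resp. $\sigma'_{*0}$) and only for scalars in the narrow set $J(\Omega)$ rather than $J(\Delta)$ — i.e. that the coefficient with which $\sigma_{*0}$ occurs always lands in $J(\Omega)$; getting this reduction from $J(\Delta)$ to $J(\Omega)$ exactly right is what makes the seemingly weak hypothesis both necessary and sufficient, and is the step where the bookkeeping is most easily derailed.
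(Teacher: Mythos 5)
Your plan is a from-scratch computational proof, whereas the paper derives the lemma in a few lines from results it imports from \cite{bak-preusser}: by \cite[Corollary 35]{bak-preusser}, $\sigma\in\NU_{2n+1}((R,\Delta),(I,\Omega))$ iff $\Omega={}^{\sigma}\Omega$, where ${}^{\sigma}\Omega=\{(Q(\sigma_{*0})\minus(1,0))\circ x\+(x,y)\mid (x,y)\in\Omega\}\+\Omega_{\min}^I$, and by \cite[Lemma 33]{bak-preusser} the assignment $(\sigma,\Omega)\mapsto{}^{\sigma}\Omega$ is an inclusion-preserving group action on relative odd form parameters; the equality $\Omega={}^{\sigma}\Omega$ is then split into ${}^{\sigma}\Omega\subseteq\Omega$ and ${}^{\sigma^{-1}}\Omega\subseteq\Omega$, which read off as exactly the two displayed conditions. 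What you propose to do by hand is essentially to re-prove that imported machinery (its computational core is \cite[Lemma 63]{bak-preusser}), and as written there are two genuine gaps.

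First, in the sufficiency direction you control the expansion of $Q(\rho_{*j})$ only modulo $\Delta_{\min}$, via $Q(u+v)\equiv Q(u)\+Q(v)\+(0,B(u,v))\bmod\Delta_{\min}$. But $\Delta_{\min}$ is \emph{not} contained in $\Omega$ in general (only $\{(0,x-\bar x\lambda)\mid x\in I\}\subseteq\Omega_{\min}^I$ is), so a congruence mod $\Delta_{\min}$ cannot certify membership in $\Omega$. You must carry the exact error terms and verify that each one is of the form $(0,a-\bar a\lambda)$ with $a\in I$ or lies in $\Delta\circ I$; this is precisely the bookkeeping that \cite[Lemma 63]{bak-preusser} performs and that your sketch replaces with ``pushing the resulting products into $I$''. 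Second, and more seriously, the necessity direction as you describe it does not close: conjugating $T_i(x,z)$ produces the obstruction $(Q(\sigma_{*0})\minus(1,0))\circ x'$ only for scalars $x'$ of the shape $x\cdot(\text{entry of }\sigma^{-1})$ (e.g.\ $x\sigma'_{-i,0}$ or $x\sigma'_{-i,j}$), and no amount of ``varying $i$ and composing transvections'' makes these exhaust $J(\Omega)$ — if the relevant entries of $\sigma^{-1}$ lie in a proper ideal, so do all the $x'$ you produce. To recover the condition for an arbitrary $x\in J(\Omega)$ one must either sum over a full column of the identity $\sigma'\sigma=e$ (writing $x=\sum_j x\sigma'_{lj}\sigma_{jl}$ and controlling the resulting $\+$-error terms as in Lemma \ref{last}, which again needs those errors to land in $\Omega_{\min}^I$), or, as the paper effectively does, identify $\sigma\,\U_{2n+1}((R,\Delta),(I,\Omega))\,\sigma^{-1}$ as a principal congruence subgroup of level $(I,{}^{\sigma}\Omega)$ and use that distinct relative odd form parameters yield distinct principal congruence subgroups. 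Neither step appears in your proposal, and you yourself flag this as the point ``where the bookkeeping is most easily derailed'' — it is, and the proposal does not supply the argument that prevents the derailment.
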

\begin{proof}
By \cite[Corollary 35]{bak-preusser}, we have $\sigma \in\NU_{2n+1}((R,\Delta),(I,\Omega))$ iff $\Omega={}^{\sigma}\Omega$ where ${}^{\sigma}\Omega=\{(Q(\sigma_{*0})\minus (1,0))\circ x\+(x,y)\mid (x,y)\in\Omega\}\+\Omega_{\min}^I$ (see \cite[Definition 30]{bak-preusser}). By the definition of equality of sets, we have $\Omega={}^{\sigma}\Omega$ iff $\Omega\subseteq{}^{\sigma}\Omega$ and ${}^{\sigma}\Omega\subseteq \Omega$. By \cite[Lemma 33]{bak-preusser}, the map $\U_{2n+1}(R,\Delta)\times \FP(I)\rightarrow \FP(I),~(\tau,\Sigma)\mapsto {}^{\tau}\Sigma$,
where $\FP(I)$ denotes the set of all relative odd form parameters for $I$, is a left group action. Clearly this action preserves inclusions. It follows that $\Omega\subseteq{}^{\sigma}\Omega$ iff ${}^{\sigma^{-1}}\Omega\subseteq\Omega$. Hence $\sigma \in\NU_{2n+1}((R,\Delta),(I,\Omega))$ iff ${}^{\sigma^{-1}}\Omega\subseteq\Omega$ and ${}^{\sigma}\Omega\subseteq \Omega$. The assertion of the lemma follows.
\end{proof}

\begin{corollary}\label{corul1}
Let $H$ be a subgroup of $\U_{2n+1}(R,\Delta)$. Then $H\leq \NU_{2n+1}((R,\Delta),U(H))$.
\end{corollary}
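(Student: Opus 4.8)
The plan is to reduce the statement to Lemma~\ref{lemul1} and then to recognise the elements that arise as members of the generating set $Z$ of $U(H)$. Write $U(H)=(I,\Omega)$ and fix $\tau\in H$; the goal is $\tau\in\NU_{2n+1}((R,\Delta),(I,\Omega))$. By Lemma~\ref{lemul1} it is enough to show $(Q(\tau_{*0})\minus(1,0))\circ x\in\Omega$ and $(Q(\tau'_{*0})\minus(1,0))\circ x\in\Omega$ for every $x\in J(\Omega)$. Since $\tau'_{*0}=(\tau^{-1})_{*0}$ and $\tau^{-1}\in H$ (as $H$ is a subgroup), the second condition for $\tau$ is exactly the first condition for $\tau^{-1}$. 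Hence it suffices to prove, for every $\sigma\in H$ and every $x\in J(\Omega)$, that $(Q(\sigma_{*0})\minus(1,0))\circ x\in\Omega$.

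To do this I would fix $\sigma\in H$ and $x\in J(\Omega)$ and use the definition of $J(\Omega)$: there is some $w\in R$ with $(x,w)\in\Omega$, and in particular $(x,w)\in\Omega\subseteq\Delta$. Choosing any $i\in\Theta_{\hb}$ (nonempty since $n\geq 3$) and setting
\[
\zeta:=(Q(\sigma_{*0})\minus(1,0))\circ x\+(x,w)\minus(x,w)\circ\sigma_{ii},
\]
the element $\zeta$ is precisely one of the generators of $Z$ from Definition~\ref{deful1}, taken with $(y,z)=(x,w)$; hence $\zeta\in Z\subseteq Z\circ R\subseteq\Omega$. Isolating the first summand in the Heisenberg group $\h$ gives
\[
(Q(\sigma_{*0})\minus(1,0))\circ x=\zeta\+\big((x,w)\circ\sigma_{ii}\big)\minus(x,w),
\]
and each term on the right lies in $\Omega$: $\zeta\in\Omega$ as just noted; $(x,w)\circ\sigma_{ii}\in\Omega$ because $\Omega$, being a relative odd form parameter, is an $R^{\bullet}$-submodule and therefore $\circ$-stable; and $\minus(x,w)\in\Omega$ because $\Omega$ is a subgroup. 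This yields the required membership, and with the reduction of the first paragraph it proves the corollary.

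I do not expect a genuine obstacle. The only step needing care is the algebra in $\h$: since $\h$ is non-abelian one must isolate $(Q(\sigma_{*0})\minus(1,0))\circ x$ from the definition of $\zeta$ in the correct order, but this involves only right translations by $(x,w)\circ\sigma_{ii}$ and $(x,w)$, both already known to lie in $\Omega$, so closure of $\Omega$ under $\+$ and inversion finishes it. The conceptual point of the whole argument is simply that a first coordinate $x$ of an element of $\Omega$ automatically comes equipped with a second coordinate $w$ making $(x,w)\in\Omega$, which is exactly what lets us substitute $x$ into the generators of $Z$ and invoke $\circ$-stability of $\Omega$.
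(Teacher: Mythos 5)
Your proof is correct and follows essentially the same route as the paper: reduce to Lemma~\ref{lemul1}, lift $x\in J(\Omega)$ to $(x,w)\in\Omega\subseteq\Delta$, recognise $\zeta$ as a generator of $Z$, and isolate $(Q(\sigma_{*0})\minus(1,0))\circ x$ using $\circ$-stability and the subgroup property of $\Omega$. You are merely more explicit than the paper about the handling of the non-abelian group operation in $\h$ and about why the condition on $\sigma'_{*0}$ reduces to the one on $\sigma_{*0}$ (via $\sigma^{-1}\in H$), both of which the paper leaves implicit.
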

\begin{proof}
Write $U(H)=(I,\Omega)$ and let $\sigma\in H$. By the previous lemma, it suffices to show that $(Q(\sigma_{*0})\minus(1,0))\circ x\in \Omega$ for any $x\in J(\Omega)$. Let $x\in J(\Omega)$. Then $(x,y)\in \Omega$ for some $y\in R$. Hence $(Q(\sigma_{*0})\minus(1,0))\circ x+(x,y)\minus (x,y)\circ \sigma_{11}\in Z\subseteq \Omega$ where $Z$ is defined as in Definition \ref{deful1}. It clearly follows that $(Q(\sigma_{*0})\minus(1,0))\circ x\in\Omega$.
\end{proof}

\begin{lemma}\label{last}
Let $(a,b)\in \Delta^{k}$ and $x_1,\dots,x_m\in R$ where $k\in\{\pm 1\}$ and $m\in\N$. Then
\begin{align*}
(a,b)\circ \sum\limits_{i=1}^mx_i=(a,b)\circ x_1\+_k\dots \+_k (a,b)\circ x_m\+_k(0,\sum\limits_{\substack{i,j=1,\\i>j}}^m \bar x_ibx_j-\overline{\bar x_ibx_j}\lambda^{k}).
\end{align*}
\end{lemma}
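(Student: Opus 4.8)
The plan is to prove the identity by induction on $m$. For $m=1$ there is nothing to prove: the indexed sum $\sum_{i>j}$ is empty, so the right-hand side is $(a,b)\circ x_1\+_k(0,0)=(a,b)\circ x_1$. Hence everything reduces to the two-term case, which I would isolate as an auxiliary identity: for all $u,v\in R$,
\begin{equation*}
(a,b)\circ(u+v)\;=\;(a,b)\circ u\;\+_k\;(a,b)\circ v\;\+_k\;\bigl(0,\;\bar v b u-\overline{\bar v b u}\,\lambda^{k}\bigr).\tag{$\ast$}
\end{equation*}

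To prove $(\ast)$ I would expand both sides directly from the definitions of $\circ$ and of the Heisenberg addition $\+_k$ (the addition of $\h$ if $k=1$, of $\h^{-1}$ if $k=-1$). Both $(a,b)\circ(u+v)$ and $(a,b)\circ u\+_k(a,b)\circ v$ have first coordinate $a(u+v)$, so they differ by an element $(0,c)$, which is central in the Heisenberg group; comparing second coordinates gives $c=\bar v b u+\bar u b v+\bar u\bar a\mu a v$ for $k=1$ (and the same up to the extra factor $\lambda^{-1}$ coming from the twist in $\+_{-1}$ when $k=-1$). The one place the hypothesis $(a,b)\in\Delta^{k}$ enters is now: since $\Delta^{k}$ is an odd form parameter it lies in the kernel of the trace map, so $\bar a\mu a+b+\bar b\lambda=0$ for $k=1$ (and the analogous relation for $k=-1$). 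Substituting this trace relation makes $\bar u b v+\bar u\bar a\mu a v$ collapse to $-\bar u\bar b\lambda v$, and re-indexing by means of $\bar{\bar z}=\lambda z\bar\lambda$, $\bar\lambda=\lambda^{-1}$, together with the commutativity of $R$ (which makes $\lambda$ central, so that $\circ_{1}$ and $\circ_{-1}$ coincide and the $\lambda$-factors may be moved freely) converts $c$ into $\bar v b u-\overline{\bar v b u}\,\lambda^{k}$. This proves $(\ast)$.

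Granting $(\ast)$, the inductive step is routine. Put $s=\sum_{i=1}^{m}x_i$ and apply $(\ast)$ with $u=s$, $v=x_{m+1}$:
\begin{equation*}
(a,b)\circ(s+x_{m+1})=(a,b)\circ s\;\+_k\;(a,b)\circ x_{m+1}\;\+_k\;\bigl(0,\;\bar x_{m+1}bs-\overline{\bar x_{m+1}bs}\,\lambda^{k}\bigr).
\end{equation*}
Now substitute the induction hypothesis for $(a,b)\circ s$; since elements of the form $(0,\cdot)$ are central in the Heisenberg group, all such factors may be collected on the right, and since $b$ is biadditive and $\bar{}~$ and right multiplication by $\lambda^{k}$ are additive, $\bar x_{m+1}bs-\overline{\bar x_{m+1}bs}\lambda^{k}=\sum_{j=1}^{m}\bigl(\bar x_{m+1}bx_j-\overline{\bar x_{m+1}bx_j}\lambda^{k}\bigr)$. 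Adding this to the correction term $\sum_{1\le j<i\le m}(\cdots)$ supplied by the induction hypothesis yields exactly $\sum_{1\le j<i\le m+1}(\cdots)$, which closes the induction. The only genuine obstacle is the auxiliary identity $(\ast)$: the $\lambda$- and $\mu$-bookkeeping in the two Heisenberg groups must be carried out carefully and uniformly in $k$, and one has to recognise that the combination of $a$ and $b$ that survives is precisely the value of the trace map — after which the hypothesis $(a,b)\in\Delta^{k}$ finishes the job.
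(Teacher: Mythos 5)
Your proof is correct and takes essentially the same route as the paper, which disposes of this lemma with the words ``straightforward computation'': your reduction by induction on $m$ to the two-term identity, verified by expanding $\circ$ and $\+_k$ and invoking the trace relation $\bar a\mu a+b+\bar b\lambda=0$ (and its analogue for $k=-1$) together with the centrality of elements of the form $(0,c)$, is precisely that computation carried out in full.
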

\begin{proof}
Straightforward computation.
\end{proof}

\begin{lemma}\label{lemul2}
Let $(I,\Omega)$ be an odd form ideal. Suppose $\sigma\in \U_{2n+1}(R,\Delta)$ satisfies Conditions (i)-(v) in Lemma \ref{lemul4}. Then $(a,b)\equiv (a,b)\circ \sigma'_{ii}\sigma_{ii} \bmod \Omega$ for any $(a,b)\in \Delta$ and $i\in \Theta_{\hb}$.
\end{lemma}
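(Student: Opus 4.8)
The plan is to use that $\sigma^{-1}\sigma=e$, so that $\sum_{j\in\Theta}\sigma'_{ij}\sigma_{ji}=1$ and hence $\sigma'_{ii}\sigma_{ii}=1-\sum_{j\neq i}\sigma'_{ij}\sigma_{ji}$, and then to compare $(a,b)\circ\sigma'_{ii}\sigma_{ii}$ with $(a,b)=(a,b)\circ 1$ by peeling off the off-diagonal terms, which I will show collectively lie in $\Omega$.

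The first step is to prove that $\sigma'_{ij}\sigma_{ji}\in I$ for every $j\in\Theta\setminus\{i\}$; equivalently, that $t:=-\sum_{j\neq i}\sigma'_{ij}\sigma_{ji}$ lies in $I$ and $\sigma'_{ii}\sigma_{ii}=1+t$. For $j\in\Theta_{\hb}\setminus\{i\}$ this follows from Conditions (i)--(v) of Lemma \ref{lemul4}, which force $\sigma_{ji}\in I$, whence $\sigma'_{ij}\sigma_{ji}\in RI\subseteq I$. For $j=0$ one first rewrites the entry $\sigma'_{i0}$ of $\sigma^{-1}$ in terms of $\sigma_{0,-i}$ by Lemma \ref{36}, observes that $\sigma_{0,-i}=Q_1(\sigma_{*,-i})\in J(\Delta)$ because $Q(\sigma_{*,-i})\in\Delta$, and then reads off $\sigma'_{i0}\sigma_{0i}\in I$ from the appropriate one among Conditions (i)--(v) (passing through the second component of $Q(\sigma_{*i})\in\Omega\subseteq\tilde I\times I$ if necessary).

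Granting this, I would apply Lemma \ref{last} with $k=1$ (legitimate since $(a,b)\in\Delta=\Delta^{1}$), $m=2$, $x_{1}=1$ and $x_{2}=t$, to obtain
\[
(a,b)\circ\sigma'_{ii}\sigma_{ii}=(a,b)\circ(1+t)=(a,b)\+(a,b)\circ t\+\bigl(0,\bar t b-\overline{\bar t b}\lambda\bigr).
\]
Since $t\in I$ we have $(a,b)\circ t\in\Delta\circ I$, and since $\bar t b\in I$ (as $I$ is involution invariant) we have $\bigl(0,\bar t b-\overline{\bar t b}\lambda\bigr)\in\{(0,x-\bar x\lambda)\mid x\in I\}$; both sets are contained in the subgroup $\Omega^{I}_{\min}$ of $\h$, and $\Omega^{I}_{\min}\subseteq\Omega$. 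Therefore $(a,b)\circ t\+\bigl(0,\bar t b-\overline{\bar t b}\lambda\bigr)\in\Omega$, so $\minus(a,b)\+(a,b)\circ\sigma'_{ii}\sigma_{ii}\in\Omega$, which is exactly $(a,b)\equiv(a,b)\circ\sigma'_{ii}\sigma_{ii}\bmod\Omega$.

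The last step and the containments $\Delta\circ I,\{(0,x-\bar x\lambda):x\in I\}\subseteq\Omega^{I}_{\min}\subseteq\Omega$ are routine, being immediate from Lemma \ref{last} and from the definitions of $\Omega^{I}_{\min}$ and of a relative odd form parameter of level $I$. The main obstacle is the first step — extracting from the precise statement of Conditions (i)--(v) in Lemma \ref{lemul4} that every off-diagonal product $\sigma'_{ij}\sigma_{ji}$ lies in $I$, in particular the term $j=0$, where the entries of $\sigma^{-1}$ have to be controlled via Lemma \ref{36}.
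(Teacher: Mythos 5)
Your proposal is correct and follows essentially the same route as the paper: both expand $(a,b)=(a,b)\circ\sum_{s}\sigma'_{is}\sigma_{si}$ via Lemma \ref{last} and observe that every contribution other than $(a,b)\circ\sigma'_{ii}\sigma_{ii}$ lands in $\Omega^{I}_{\min}$, the $s=0$ term being handled exactly as you indicate, via Lemma \ref{36} and Condition (iv) (your parenthetical fallback through $Q(\sigma_{*i})\in\Omega$ is not available, since Conditions (vi)--(vii) are not assumed, but it is also not needed). The only cosmetic difference is that you first collect the off-diagonal terms into a single element $t\in I$ and apply Lemma \ref{last} with $m=2$, whereas the paper applies it to all $2n+1$ summands at once.
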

\begin{proof}
By the previous lemma we have 
\begin{align}
&(a,b)\notag\\
=&(a,b)\circ(\sum\limits_{s=1}^{-1}\sigma'_{is}\sigma_{si})\notag\\
=&(\plus\limits_{s=1}^{-1} (a,b)\circ\sigma'_{is}\sigma_{si})\+(0,\sum\limits_{\substack{s,t=1,\\s\succ t}}^{-1} \overline{\sigma'_{is}\sigma_{si}}b\sigma'_{it}\sigma_{ti}-\overline{\overline{\sigma'_{is}\sigma_{si}}b\sigma'_{it}\sigma_{ti}}\lambda))
\end{align}
Since $\sigma$ satisfies Conditions (i) and (iv) in Lemma \ref{lemul4}, all the summands in (3) except $(a,b)\circ \sigma'_{ii}\sigma_{ii}$ are contained in $\Omega^I_{\min}$. 
\end{proof}

\begin{lemma}\label{lemul3}
Let $(I,\Omega)$ be an odd form ideal. Suppose $\sigma\in \U_{2n+1}(R,\Delta)$ satisfies Conditions (i)-(v) in Lemma \ref{lemul4}. Then $(0,-\bar\sigma'_{-i,-i}\bar y \bar\sigma_{00}\mu y+ \overline{\bar\sigma'_{-i,-i}\bar y \bar\sigma_{00}\mu y}\lambda)\in\Omega$ for any $i\in \Theta_{hb}$ and $y\in J(\Delta)$.
\end{lemma}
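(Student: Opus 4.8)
The plan is to use commutativity of $R$ to put the element into the form $(0,x-\bar x\lambda)$ for a suitable $x\in R$, then to reduce the claim to the single congruence $x\equiv z+\bar z\lambda\pmod I$, and finally to derive that congruence from Conditions (i)--(v), Lemma \ref{36}(i), and $\sigma\sigma^{-1}=e$.

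First I would normalise the element. Over a commutative ring, $~\bar{}~$ is an involution ($\overline{\bar x}=x$), all elements commute, $\bar\lambda=\lambda^{-1}$, and $I$ and $J(\Delta)$ are two-sided ideals. By Lemma \ref{36}(i), $\sigma'_{-i,-i}=\lambda^{-(\epsilon(-i)+1)/2}\bar\sigma_{ii}\lambda^{(\epsilon(-i)+1)/2}$, hence $\bar\sigma'_{-i,-i}=\sigma_{ii}$; also $\bar\sigma_{00}\mu=\mu\sigma'_{00}$. Since $y\in J(\Delta)$, fix $z\in R$ with $(y,z)\in\Delta$; then $\Delta\subseteq\Delta_{\max}=\ker\tr$ gives $\bar y\mu y=-(z+\bar z\lambda)$. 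Substituting and reordering scalars, the element in the statement becomes $(0,x-\bar x\lambda)$, where $x:=\sigma_{ii}\bar\sigma_{00}(z+\bar z\lambda)=-\bar y\mu\,\sigma_{ii}\sigma'_{00}\,y$. A one-line computation shows $\overline{z+\bar z\lambda}\,\lambda=z+\bar z\lambda$; so if $c:=x-(z+\bar z\lambda)\in I$, then $x-\bar x\lambda=(c-\bar c\lambda)+\big((z+\bar z\lambda)-\overline{z+\bar z\lambda}\,\lambda\big)=c-\bar c\lambda$, and $(0,c-\bar c\lambda)\in\Omega^{I}_{\min}\leq\Omega$. Thus it suffices to prove $x\equiv z+\bar z\lambda\pmod I$.

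For the congruence, I would first observe that Condition (v) holds for $\sigma^{-1}$ as well: by Lemma \ref{36}(i) one has $\mu(\sigma'_{00}-\sigma'_{ii})=\mu\,\overline{\sigma_{00}-\sigma_{-i,-i}}$, and, writing $X:=\sigma_{00}-\sigma_{-i,-i}$, $\mu\bar X=\overline{\mu X}\,\lambda$, whence $\overline{J(\Delta)}\,\mu(\sigma'_{00}-\sigma'_{ii})\,J(\Delta)=\overline{\overline{J(\Delta)}\,\mu X\,J(\Delta)}\;\lambda\subseteq I$. Since $\bar y\mu\sigma_{ii}=\overline{\bar\sigma_{ii}y}\,\mu$ with $\bar\sigma_{ii}y\in J(\Delta)$, this gives $x=-\overline{\bar\sigma_{ii}y}\,\mu\,\sigma'_{00}\,y\equiv-\overline{\bar\sigma_{ii}y}\,\mu\,\sigma'_{ii}\,y=-\bar y\mu\,\sigma_{ii}\sigma'_{ii}\,y\pmod I$. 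Now $(\sigma\sigma^{-1})_{ii}=1$ yields $\sigma_{ii}\sigma'_{ii}=1-\sum_{s\neq i}\sigma_{is}\sigma'_{si}$; for $s\in\Theta_{\hb}\setminus\{i\}$ one has $\sigma_{is}\in I$, and for $s=0$, writing $\mu\sigma'_{0i}=\bar\sigma_{-i,0}\lambda^{(\epsilon(i)+1)/2}$ (Lemma \ref{36}(i)), $\bar y\mu\,\sigma_{i0}\sigma'_{0i}\,y=\sigma_{i0}\,\overline{\sigma_{-i,0}y}\,\lambda^{(\epsilon(i)+1)/2}y\in I$ because $\sigma_{-i,0}y\in I$. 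Hence $\bar y\mu\,\sigma_{ii}\sigma'_{ii}\,y\equiv\bar y\mu y\pmod I$, so $x\equiv-\bar y\mu y=z+\bar z\lambda\pmod I$, as required. Lemma \ref{last} is used whenever a scalar has to be distributed over a sum inside $\h$.

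The substance is entirely in the last paragraph, and the main obstacle is organisational rather than conceptual: one must establish the ``mirror'' form of Condition (v) for $\sigma^{-1}$ (which is precisely where commutativity does the work) and keep track of all the $\lambda$- and $\mu$-factors produced by Lemma \ref{36}(i), so that each off-diagonal product $\sigma_{is}\sigma'_{si}$ and each quantity such as $\mu\sigma'_{0i}$ or $\bar\sigma_{ii}y$ is presented in exactly the form controlled by one of the Conditions, using repeatedly that $~\bar{}~$ is an involution and that $I$ and $J(\Delta)$ are two-sided over the commutative ring $R$. No localisation is required.
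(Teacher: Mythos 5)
Your proof is correct and follows essentially the same route as the paper's: rewrite the element via Lemma \ref{36} as $(0,-\sigma_{ii}\bar y\mu\sigma'_{00}y+\overline{\sigma_{ii}\bar y\mu\sigma'_{00}y}\lambda)$, split off the part involving $\sigma'_{00}-\sigma'_{ii}$ (controlled by Condition (v) transferred to $\sigma^{-1}$), and dispose of the remainder using $\sigma_{ii}\sigma'_{ii}\equiv 1\bmod I$ via Conditions (i) and (iii). The only real difference is that where the paper invokes \cite[Lemma 6.30]{preusser_sct2} to transfer Conditions (i)--(v) to $\sigma^{-1}$, you derive the needed mirror of Condition (v) directly from Lemma \ref{36}(i) and the involution-invariance of $I$ --- a self-contained and slightly cleaner step (your auxiliary $z$ is unnecessary, since $\overline{\bar y\mu y}\lambda=\bar y\mu y$ already makes the base term vanish, but it is harmless).
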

\begin{proof}

First we note that by \cite[Lemma 6.30]{preusser_sct2}, $\sigma^{-1}$ also satisfies Conditions (i)-(v) in Lemma \ref{lemul4}. Clearly
\begin{align*}
&(0,-\bar\sigma'_{-i,-i}\bar y \bar\sigma_{00}\mu y+ \overline{\bar\sigma'_{-i,-i}\bar y \bar\sigma_{00}\mu y}\lambda)\\
=&(0,-\sigma_{ii}\bar y \mu \sigma'_{00}y+ \overline{\sigma_{ii}\bar y \mu \sigma'_{00}y}\lambda)\\
=&\underbrace{(0,-\sigma_{ii}\bar y \mu (\sigma'_{00}-\sigma'_{ii})y+ \overline{\sigma_{ii}\bar y \mu (\sigma'_{00}-\sigma'_{ii})y}\lambda)}_{X:=}\+\underbrace{(0,-\sigma_{ii}\bar y \mu \sigma'_{ii}y+ \overline{\sigma_{ii}\bar y \mu \sigma'_{ii}y}\lambda)}_{Y:=},
\end{align*}
the first equality by Lemma \ref{36}. Since $\sigma'$ satisfies Condition (v), $X$ lies in $\Omega_{\min}^I$. On the other hand, $\sigma_{ii}\sigma_{ii}'=1-\sum_{j\neq i}\sigma_{ij}\sigma_{ji}'\equiv 1 \bmod I$ since $\sigma$ satisfies satisfies Conditions (i) and (iii). Hence
\[Y\equiv (0,-\bar y \mu y+ \overline{\bar y \mu y}\lambda)=0 \bmod \Omega_{\min}^I\]
since $\mu=\bar\mu\lambda$.
\end{proof}

\begin{lemma}\label{lemul4}
Let $\sigma\in \U_{2n+1}(R,\Delta)$ and $(I,\Omega)$ an odd form ideal. Then $[\sigma,\EU_{2n+1}(R,$ $\Delta)]\leq\U_{2n+1}((R,\Delta),(I,\Omega))$ iff 
\begin{enumerate}[(i)]
\item $\sigma_{ij}\in I ~\forall i\neq j\in \Theta_{\hb}$, 
\item $\sigma_{ii}-\sigma_{jj}\in I ~\forall i,j\in \Theta_{\hb}$,
\item $\sigma_{i0}J(\Delta)\in I~\forall i\in \Theta_{\hb}$,
\item $\overline{J(\Delta)}\mu\sigma_{0j}\in I~\forall j\in \Theta_{\hb}$,
\item $\overline{J(\Delta)}\mu(\sigma_{00}-\sigma_{jj})J(\Delta)\in I ~\forall j\in \Theta_{\hb}$,
\item $Q(\sigma_{*j})\in\Omega~\forall j\in \Theta_{\hb}$ and 
\item $(Q(\sigma_{*0})\minus(1,0))\circ y\+(y,z)\minus (y,z)\circ \sigma_{ii}\in \Omega~\forall i\in \Theta_{\hb}, (y,z)\in \Delta$.
\end{enumerate}
\end{lemma}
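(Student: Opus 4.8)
The plan is to prove both directions by reducing everything to the action of $\sigma$ on standard basis columns and rows, and to the behaviour of the map $Q$. For the forward implication, assume $[\sigma,\EU_{2n+1}(R,\Delta)]\leq\U_{2n+1}((R,\Delta),(I,\Omega))$. Evaluating the commutator $[\sigma,T_{ij}(1)]$ for various elementary transvections and then reading off Lemma \ref{46}(i) gives the membership of the hyperbolic entries of that commutator in $I$; expanding $[\sigma,T_{ij}(1)]_{\hb}-e_{\hb}$ in terms of the $\sigma_{kl}$ produces linear expressions in the entries of $\sigma$, and choosing the indices $i,j$ suitably isolates $\sigma_{ij}$, the differences $\sigma_{ii}-\sigma_{jj}$, and the mixed terms $\sigma_{i0}J(\Delta)$, $\overline{J(\Delta)}\mu\sigma_{0j}$, $\overline{J(\Delta)}\mu(\sigma_{00}-\sigma_{jj})J(\Delta)$. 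This yields (i)--(v). For (vi) and (vii) one instead uses Lemma \ref{46}(ii): one computes $Q$ of the relevant columns of $[\sigma,T]$ for $T$ a short, long, and extra short root transvection. The column $[\sigma,T_i(\alpha)]_{*j}$ differs from $\sigma_{*j}$ by a combination of columns of $\sigma$ with coefficients in the data of $T$, and applying the quasi-additivity $Q(u+v)\equiv Q(u)\+ Q(v)\+(0,B(u,v))$ from Lemma 9(ii) together with Lemma \ref{38} (which controls $B(\sigma u,\sigma v)=B(u,v)$) expresses $Q([\sigma,T]_{*j})\minus Q(\sigma_{*j})\cdots$ in a manageable way; the condition that this lie in $\Omega$ for all choices of $T$ unwinds to (vi) and (vii). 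One must be a little careful with the $j=0$ column, where $J(\Delta)$ enters because $Q(\sigma_{*0})\minus(1,0)$ need only become admissible after $\circ\, a$ with $a\in J(\Delta)$.

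For the converse, assume (i)--(vii). It suffices to check that $[\sigma,T]\in\U_{2n+1}((R,\Delta),(I,\Omega))$ for every elementary transvection $T$, using the characterisation in Lemma \ref{46}. Write $\tau:=\sigma^{-1}$; by \cite[Lemma 6.30]{preusser_sct2} (already invoked in Lemma \ref{lemul3}) $\tau$ also satisfies (i)--(vii), which we will need since $[\sigma,T]=\sigma T\sigma^{-1}T^{-1}$ involves $\sigma'$-entries. The hyperbolic condition $[\sigma,T]_{\hb}\equiv e_{\hb}\bmod I$ follows by a direct expansion: each entry of $(\sigma T\sigma^{-1})_{\hb}-e_{\hb}$ is, after using (i), a sum of products at least one of whose factors is an off-diagonal entry of $\sigma$, a difference $\sigma_{ii}-\sigma_{jj}$, or one of the mixed terms, hence lies in $I$; conjugating by $T^{-1}$ preserves this. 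For the $Q$-condition one computes $Q(([\sigma,T])_{*j})$ for each $j$. Here Lemmas \ref{last}, \ref{lemul2}, \ref{lemul3} are the workhorses: Lemma \ref{lemul2} lets one replace $(a,b)$ by $(a,b)\circ\sigma'_{ii}\sigma_{ii}$ modulo $\Omega$, Lemma \ref{lemul3} disposes of the cross term $\bar\sigma'_{-i,-i}\bar y\bar\sigma_{00}\mu y$ that appears when the $e_{i,-i}$ or $e_{0,-i}$ part of $T$ interacts with the $0$-row of $\sigma$, and Lemma \ref{last} handles the noncommutativity of $\+$ when expanding $Q$ of a sum of several columns. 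Assembling these, $Q(([\sigma,T])_{*j})\minus Q(\sigma_{*j})$ is shown to lie in $\Omega$ using (vi), and the $j=0$ case uses (vii) together with Lemma \ref{lemul2} applied with $(y,z)\in\Delta$ and the relation $\sigma_{ii}\sigma'_{ii}\equiv 1\bmod I$.

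The main obstacle I expect is the bookkeeping in the converse direction for the $Q$-condition on the columns of $[\sigma,T]$ when $T$ is an extra short root transvection $T_i(x,y)$: the column $([\sigma,T])_{*j}$ is then a sum of up to three or four columns of $\sigma$ (and of $\sigma^{-1}$), so expanding $Q$ of it produces, besides the "diagonal" terms $Q(\sigma_{*k})\circ(\text{scalar})$ which are handled by (vi) and Lemma \ref{lemul2}, a collection of "cross" terms $(0,B(\sigma_{*k},\sigma_{*l})\cdot r)$ and the Heisenberg correction terms from Lemma \ref{last}; one has to verify that every one of these lands in $\Omega$, which requires knowing $B$ of two columns of $\sigma$ in terms of the entries of $\sigma$ (via Lemma \ref{36}(i), i.e. the formulas relating $\sigma'$ and $\sigma$) and then feeding in (i)--(v). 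Keeping track of the powers of $\lambda$ and the $\mu$'s, and of which side of $J(\Delta)$ each factor sits on, is where errors are most likely; the cleanest route is probably to treat $T_{ij}(x)$, $T_i(0,y)$ and the general $T_i(x,y)$ separately and, within each, to split the index $j$ into the cases $j=0$, $j=\pm i$, $j=i'$ for the "far" indices, and everything else, using the commutator relations (S1)--(SE2) of Lemma \ref{39} to see which columns of $\sigma$ actually appear.
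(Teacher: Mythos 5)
Your overall strategy coincides with the paper's: both directions reduce to analysing $[\sigma,\tau]$ for elementary $\tau$ against the two conditions of Lemma \ref{46}, with Lemma \ref{lemul2} used to trade $(a,b)$ for $(a,b)\circ\sigma'_{ii}\sigma_{ii}$ modulo $\Omega$ and Lemma \ref{lemul3} absorbing the cross term $\bar\sigma'_{-i,-i}\bar y\bar\sigma_{00}\mu y$. The difference is one of economy rather than of route: the paper does not carry out the commutator computations at all, but observes that conditions (i)--(v) are exactly the characterisation of $\sigma\in\CU_{2n+1}((R,\Delta),(I,\Omega^I_{\max}))$ given in \cite[Lemma 6.30]{preusser_sct2} (using $\NU_{2n+1}((R,\Delta),(I,\Omega^I_{\max}))=\U_{2n+1}(R,\Delta)$), and that the $Q$-condition on the columns of $[\sigma,\tau]$ is computed in \cite[Lemma 63]{bak-preusser}, whose output $Q(\sigma_{*i})\circ\sigma'_{jj}\in\Omega$ and $(Q(\sigma_{*0})\minus(1,0))\circ y\sigma'_{ii}\+(y,z)\circ\sigma'_{ii}\minus(y,z)\in\Omega$ is then converted into (vi) and (vii) via Lemma \ref{lemul2}. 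Your plan to redo these computations from scratch is legitimate and would make the lemma self-contained, which has some value given that the paper has to flag misprints in the cited \cite[Lemma 63]{bak-preusser}.

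There is, however, one point where your sketch as written would not go through. In the forward direction you assert that expanding $[\sigma,T_{ij}(1)]_{\hb}-e_{\hb}$ "produces linear expressions in the entries of $\sigma$" from which (i)--(v) can be read off by choosing indices. This is not so: for $\tau=T_{jk}(x)$ one has $\sigma\tau\sigma^{-1}-e=\sigma_{*j}x\sigma'_{k*}-\sigma_{*,-k}(\cdots)\sigma'_{-j,*}$, so every entry of the commutator is \emph{bilinear} in the entries of $\sigma$ and $\sigma^{-1}$, and no single commutator entry isolates a single $\sigma_{ij}$. Extracting (i)--(v) from the condition that all these bilinear expressions lie in $I$ requires combining several commutators, the unitarity relations of Lemma \ref{36}(i) rewriting $\sigma'$ in terms of $\sigma$, and the identities $\sum_p\sigma_{ip}\sigma'_{pj}=\delta_{ij}$; this is precisely the nontrivial content of \cite[Lemma 6.30]{preusser_sct2} and is the step your proposal leaves undone (and mischaracterises). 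The analogous caveat applies to the converse for the hyperbolic condition: showing, e.g., that $[\sigma,T_{jk}(x)]_{jk}\equiv -x+\sigma_{jj}x\sigma'_{kk}\equiv 0\bmod I$ needs $\sigma_{jj}\sigma'_{kk}\equiv 1\bmod I$, which again is not a one-line consequence of (i) alone but uses (i)--(v) together with Lemma \ref{36}. Your treatment of (vi)--(vii) via Lemmas \ref{last}, \ref{lemul2}, \ref{lemul3} is the right machinery and matches how the paper's auxiliary lemmas are designed to be used.
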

\begin{proof}
$(\Rightarrow)$ Suppose that $[\sigma,\EU_{2n+1}(R,\Delta)]\leq\U_{2n+1}((R,\Delta),(I,\Omega))$. Then $[\sigma,\EU_{2n+1}($ $R,\Delta)]\leq\U_{2n+1}((R,\Delta),(I,\Omega^I_{\max}))$ and hence $\sigma\in  \CU_{2n+1}((R,\Delta),(I,\Omega^I_{\max}))$ (note that $\NU_{2n+1}((R,\Delta),(I,\Omega^I_{\max}))=U_{2n+1}(R,\Delta)$, see \cite[Remark 26]{bak-preusser}). It follows from \cite[Lemma 6.30]{preusser_sct2} that Conditions (i)-(v) above hold. By analysing \cite[Lemma 63]{bak-preusser} we obtain $Q(\sigma_{*i})\circ \sigma'_{jj}\in \Omega$ and $(Q(\sigma_{*0})\minus(1,0))\circ y\sigma'_{ii}\+(y,z)\circ \sigma'_{ii}\minus (y,z)\in \Omega~$ for any $i\neq \pm j\in \Theta_{\hb}$ and $(y,z)\in \Delta$. There are the following misprints in \cite[Lemma 63]{bak-preusser}, all on page 2866: $a_{-k}$ should be replaced by $a_{-i}$ (1 occurence), $\sigma'_{-1,-1}$ should be replaced by $\sigma'_{-i,-i}$ (4 occurences) and $(0,\bar\sigma'_{-i,-i}\bar y \bar\sigma_{00}\mu y- \overline{\bar\sigma'_{-i,-i}\bar y \bar\sigma_{00}\mu y}\lambda)$ should be replaced by $(0,-\bar\sigma'_{-i,-i}\bar y \bar\sigma_{00}\mu y+ \overline{\bar\sigma'_{-i,-i}\bar y \bar\sigma_{00}\mu y}\lambda)$ (1 occurence; note that $(0,-\bar\sigma'_{-i,-i}\bar y \bar\sigma_{00}\mu y+ \overline{\bar\sigma'_{-i,-i}\bar y \bar\sigma_{00}\mu y}\lambda)\in\Omega$ by Lemma \ref{lemul3}). It follows from Lemma \ref{lemul2} that $\sigma$ satisfies Conditions (vi) an (vii).\\
$(\Leftarrow)$ Now suppose that $\sigma$ satisfies Conditions (i)-(vii). We have to show that $[\sigma,\EU_{2n+1}$ $(R,\Delta)]\leq\U_{2n+1}((R,\Delta),(I,\Omega))$. Since $\U_{2n+1}((R,\Delta),(I,\Omega))$ is normalised by $\EU_{2n+1}($ $R,\Delta)$ (follows from \cite[Corollary 36]{bak-preusser}), it suffices to show that $[\sigma,\tau]\in\U_{2n+1}((R,\Delta),(I,$ $\Omega))$ for any elementary transvection $\tau$. Since $\sigma\in \CU_{2n+1}((R,\Delta),(I,\Omega^I_{\max}))$ by \cite[Lemma 6.30]{preusser_sct2}, we obtain that $[\sigma,\tau]$ satisfies Condition (i) in Lemma \ref{46}. It remains to show that $[\sigma,\tau]$ satisfies Condition (ii) in Lemma \ref{46}. But that follows from \cite[Lemma 63]{bak-preusser}.
\end{proof}

If $(I,\Omega)$ and $(J,\Sigma)$ are odd form parameters, then we write $(I,\Omega)\subseteq (J,\Sigma)$ if $I\subseteq J$ and $\Omega\subseteq \Sigma$.
\begin{proposition}\label{propul}
Let $H$ be a subgroup of $\U_{2n+1}(R,\Delta)$. Then $H\leq\CU_{2n+1}((R,\Delta),$ $U(H))$ and $U(H)$ is the smallest form odd ideal with this property.
\end{proposition}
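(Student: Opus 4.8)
The plan is to show the two halves of the statement separately: first that $H\leq\CU_{2n+1}((R,\Delta),U(H))$, and then that $U(H)$ is minimal among odd form ideals $(J,\Sigma)$ with $H\leq\CU_{2n+1}((R,\Delta),(J,\Sigma))$.

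\textbf{Containment.} Write $U(H)=(I,\Omega)$ and fix $\sigma\in H$. By Corollary \ref{corul1} we already know $\sigma\in\NU_{2n+1}((R,\Delta),U(H))$, so by the definition of $\CU_{2n+1}$ it remains to verify that $[\sigma,\EU_{2n+1}(R,\Delta)]\leq\U_{2n+1}((R,\Delta),U(H))$. For this I would invoke Lemma \ref{lemul4}: it suffices to check that $\sigma$ satisfies Conditions (i)--(vii) there with respect to $(I,\Omega)$. But each of those conditions is built directly into the definition of $U(H)$ (Definition \ref{deful1}). Conditions (i)--(v) say exactly that the generators $\sigma_{ij}$, $\sigma_{ii}-\sigma_{jj}$, $\sigma_{i0}J(\Delta)$, $\overline{J(\Delta)}\mu\sigma_{0j}$, $\overline{J(\Delta)}\mu(\sigma_{00}-\sigma_{jj})J(\Delta)$ of the set $Y$ lie in $I$, which holds since $I$ is by definition the involution invariant ideal generated by $Y$. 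Condition (vi), $Q(\sigma_{*j})\in\Omega$ for $j\in\Theta_{\hb}$, and Condition (vii), $(Q(\sigma_{*0})\minus(1,0))\circ y\+(y,z)\minus(y,z)\circ\sigma_{ii}\in\Omega$, hold because these elements lie in the set $Z$ and $\Omega\supseteq Z\circ R\ni Z$. Hence Lemma \ref{lemul4} applies and $\sigma\in\CU_{2n+1}((R,\Delta),U(H))$.

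\textbf{Minimality.} Suppose $(J,\Sigma)$ is an odd form ideal with $H\leq\CU_{2n+1}((R,\Delta),(J,\Sigma))$; I must show $(I,\Omega)\subseteq(J,\Sigma)$, i.e. $I\subseteq J$ and $\Omega\subseteq\Sigma$. For every $\sigma\in H$ we have in particular $[\sigma,\EU_{2n+1}(R,\Delta)]\leq\U_{2n+1}((R,\Delta),(J,\Sigma))$, so by the forward direction of Lemma \ref{lemul4}, $\sigma$ satisfies Conditions (i)--(vii) relative to $(J,\Sigma)$. Conditions (i)--(v) give that all generators of $Y$ lie in $J$; since $J$ is involution invariant it also contains $\overline{Y}$, hence contains the involution invariant ideal generated by $Y$, which is $I$. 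Thus $I\subseteq J$. For the form parameter, Conditions (vi) and (vii) show that every element of the generating set $Z$ of $\Omega$ (modulo $\Omega_{\min}^I$) lies in $\Sigma$; since $\Sigma$ is an $R^\bullet$-submodule it contains $Z\circ R$, and since $I\subseteq J$ we have $\Omega_{\min}^I\subseteq\Omega_{\min}^J\subseteq\Sigma$. Therefore $\Omega=\Omega_{\min}^I\+Z\circ R\subseteq\Sigma$, completing the proof that $(I,\Omega)\subseteq(J,\Sigma)$.

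\textbf{Expected obstacle.} The argument is essentially a bookkeeping translation between Definition \ref{deful1} and Lemma \ref{lemul4}, so there should be no genuine difficulty — the content has been pushed into those earlier results (especially Lemma \ref{lemul4} and Corollary \ref{corul1}). The one point requiring a little care is making sure the ``modulo $\Omega_{\min}^I$'' bookkeeping in Condition (vii) and in the definition of $\Omega$ matches up correctly, i.e. that the elements written down in $Z$ are literally the ones appearing in Lemma \ref{lemul4}(vii), and that $\Omega_{\min}^I\subseteq\Sigma$ follows from $I\subseteq J$ together with $\Omega_{\min}^J\subseteq\Sigma$; both are immediate from the definitions but worth stating explicitly.
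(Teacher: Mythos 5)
Your proof is correct and follows essentially the same route as the paper: Corollary \ref{corul1} together with Lemma \ref{lemul4} for the containment $H\leq\CU_{2n+1}((R,\Delta),U(H))$, and the forward direction of Lemma \ref{lemul4} (giving $Y\subseteq J$ and $Z\subseteq\Sigma$) for minimality. Your write-up merely spells out the bookkeeping that the paper leaves implicit.
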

\begin{proof}
It follows from Corollary \ref{corul1} and Lemma \ref{lemul4} that $H\leq\CU_{2n+1}((R,\Delta),U(H))$. Let $Y$ and $Z$ be defined as in Definition \ref{deful1}. If $(I,\Omega)$ is an odd form ideal such that $H\leq\CU_{2n+1}((R,\Delta),(I,\Omega))$, then $Y\subseteq I$ and $Z\subseteq \Omega$ by Lemma \ref{lemul4}. It follows that $U(H)\subseteq (I,\Omega)$.
\end{proof}

\begin{corollary}\label{corul3}
Let $H$ be a subgroup of $\U_{2n+1}(R,\Delta)$. Then $L(H)\subseteq U(H)$.
\end{corollary}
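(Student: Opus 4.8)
The statement to prove is Corollary~\ref{corul3}: for any subgroup $H\leq\U_{2n+1}(R,\Delta)$ we have $L(H)\subseteq U(H)$. Write $L(H)=(I_L,\Omega_L)$ and $U(H)=(I_U,\Omega_U)$.

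The plan is to exploit the two characterizations that have already been established in this section. On the one hand, by the remark following Definition~\ref{defll} we have $\EU_{2n+1}((R,\Delta),L(H))\leq H$, and $L(H)$ is the \emph{greatest} odd form ideal with this property. On the other hand, by Proposition~\ref{propul} we have $H\leq\CU_{2n+1}((R,\Delta),U(H))$, and $U(H)$ is the \emph{smallest} odd form ideal with this property. So it suffices to exhibit, for each odd form ideal $(J,\Sigma)$ with $\EU_{2n+1}((R,\Delta),(J,\Sigma))\leq H\leq\CU_{2n+1}((R,\Delta),(J',\Sigma'))$, a comparison $(J,\Sigma)\subseteq(J',\Sigma')$; applying this with $(J,\Sigma)=L(H)$ and $(J',\Sigma')=U(H)$ gives the claim. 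Equivalently, and more directly: since $\EU_{2n+1}((R,\Delta),L(H))\leq H\leq\CU_{2n+1}((R,\Delta),U(H))$, it is enough to prove the general monotonicity fact that $\EU_{2n+1}((R,\Delta),(J,\Sigma))\leq\CU_{2n+1}((R,\Delta),(J',\Sigma'))$ forces $(J,\Sigma)\subseteq(J',\Sigma')$.

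To prove that monotonicity fact, first I would reduce it to membership statements about the generating elementary transvections. If $x\in J$, then $T_{ij}(x)\in\EU_{2n+1}((R,\Delta),(J,\Sigma))\leq\CU_{2n+1}((R,\Delta),(J',\Sigma'))$, and reading off the hyperbolic part of $T_{ij}(x)$ via Lemma~\ref{lemul4}(i) gives $x\in J'$; similarly the $(I,\Omega)$-elementary extra short root transvections $T_i(x,y)$ with $(x,y)\in\Sigma^{-\epsilon(i)}$ lie in $\CU_{2n+1}((R,\Delta),(J',\Sigma'))$, and conditions (vi)--(vii) of Lemma~\ref{lemul4} (that is, the constraints $Q(\sigma_{*j})\in\Sigma'$ and the twisted condition at the $0$-column) yield $(x,y)\in(\Sigma')^{-\epsilon(i)}$, hence $\Sigma^{-\epsilon(i)}\subseteq(\Sigma')^{-\epsilon(i)}$ for $i\in\Theta_{\pm}$, which is the same as $\Sigma\subseteq\Sigma'$. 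One has to be slightly careful that the generators of $\Sigma$ include the ``minimal'' part $\Omega_{\min}^J$, but since $J\subseteq J'$ is already established one gets $\Omega_{\min}^J\subseteq\Omega_{\min}^{J'}\subseteq\Sigma'$ automatically from the definitions in Section~\ref{sec 3.1}. Thus $(J,\Sigma)\subseteq(J',\Sigma')$.

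I expect the only mildly delicate point to be checking that the defining conditions of $L(H)$ really do produce \emph{elements of $H$} of the form needed to feed into the upper-level conditions of Lemma~\ref{lemul4}: the lower level is defined through conjugates $T_{ij}(xr)^{\tau}$, $T_i(\alpha\circ xr)^{\tau}$, etc., lying in $H$, whereas the upper level is defined through the matrix entries, quotients, and $Q$-values of \emph{arbitrary} $\sigma\in H$. The cleanest route, and the one I would take, is precisely the indirect one sketched above: never compare $L(H)$ and $U(H)$ entrywise at all, but instead chain the two extremality statements through the single intermediate group $H$, so that the only genuine computation is the entrywise reading of the elementary generators $T_{ij}(x)$ and $T_i(x,y)$ against Lemma~\ref{lemul4}. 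That reading is routine given the explicit formulas in the definition of the elementary transvections and in Lemma~\ref{lemul4}, so there is no serious obstacle; the proof should be short.

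\begin{proof}
By the remark following Definition~\ref{defll}, $\EU_{2n+1}((R,\Delta),L(H))\leq H$, and by Proposition~\ref{propul}, $H\leq\CU_{2n+1}((R,\Delta),U(H))$. Hence $\EU_{2n+1}((R,\Delta),L(H))\leq\CU_{2n+1}((R,\Delta),U(H))$. Write $L(H)=(J,\Sigma)$ and $U(H)=(I,\Omega)$. For $x\in J$ and $i\neq\pm j\in\Theta_{\hb}$ the short root transvection $T_{ij}(x)$ lies in $\CU_{2n+1}((R,\Delta),(I,\Omega))$, so by Lemma~\ref{lemul4}(i) its off-diagonal hyperbolic entry $x$ lies in $I$; thus $J\subseteq I$. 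For $i\in\Theta_{\hb}$ and $(x,y)\in\Sigma^{-\epsilon(i)}$ the extra short root transvection $T_i(x,y)$ lies in $\CU_{2n+1}((R,\Delta),(I,\Omega))$, so by Conditions (vi)--(vii) of Lemma~\ref{lemul4} we get $Q((T_i(x,y))_{*j})\in\Omega$ for the relevant column $j$, which forces $(x,y)\in\Omega^{-\epsilon(i)}$; hence $\Sigma^{-\epsilon(i)}\subseteq\Omega^{-\epsilon(i)}$ for every $i\in\Theta_{\hb}$, i.e. $\Sigma\subseteq\Omega$. Therefore $L(H)=(J,\Sigma)\subseteq(I,\Omega)=U(H)$.
\end{proof}
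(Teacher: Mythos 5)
Your proof is correct and follows essentially the same route as the paper: chain $\EU_{2n+1}((R,\Delta),L(H))\leq H\leq \CU_{2n+1}((R,\Delta),U(H))$ and then read off $L(H)\subseteq U(H)$ from Lemma \ref{lemul4}, which the paper does in one line and you spell out by testing the generating elementary transvections against Conditions (i) and (vi). The extra detail is fine; no substantive difference in approach.
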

\begin{proof}
By Proposition \ref{propul} we have 
\[\EU_{2n+1}((R,\Delta),L(H))\leq H\leq\CU_{2n+1}((R,\Delta),U(H)).\]
It follows from Lemma \ref{lemul4} that $L(H)\subseteq U(H)$.
\end{proof}

\begin{corollary}\label{corul2}
Let $H$ be a subgroup of $\U_{2n+1}(R,\Delta)$ and $\tau\in \EU_{2n+1}(R,\Delta)$. Then $U(H)=U(H^\tau)$.
\end{corollary}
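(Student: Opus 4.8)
The plan is to deduce the statement from the extremal characterisation of $U(H)$ furnished by Proposition \ref{propul}, namely that $U(H)$ is the smallest odd form ideal $(I,\Omega)$ with $H\leq\CU_{2n+1}((R,\Delta),(I,\Omega))$, together with the fact that every full congruence subgroup is normalised by $\EU_{2n+1}(R,\Delta)$.

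So first I would record that $\CU_{2n+1}((R,\Delta),(I,\Omega))$ is normalised by $\EU_{2n+1}(R,\Delta)$ for every odd form ideal $(I,\Omega)$. By \cite[Corollary 36]{bak-preusser} the principal congruence subgroup $\U_{2n+1}((R,\Delta),(I,\Omega))$ is normalised by $\EU_{2n+1}(R,\Delta)$; consequently its normaliser $\NU_{2n+1}((R,\Delta),(I,\Omega))$ in $\U_{2n+1}(R,\Delta)$ is also normalised by $\EU_{2n+1}(R,\Delta)$ (the normaliser of a subgroup is normalised by anything normalising that subgroup). Now let $\sigma\in\CU_{2n+1}((R,\Delta),(I,\Omega))$ and $\tau\in\EU_{2n+1}(R,\Delta)$. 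Then $\sigma^{\tau}\in\NU_{2n+1}((R,\Delta),(I,\Omega))$, and since conjugation by $\tau$ is an automorphism of $\U_{2n+1}(R,\Delta)$ stabilising $\EU_{2n+1}(R,\Delta)$, we get $[\sigma^{\tau},\EU_{2n+1}(R,\Delta)]=[\sigma^{\tau},\EU_{2n+1}(R,\Delta)^{\tau}]=[\sigma,\EU_{2n+1}(R,\Delta)]^{\tau}\leq\U_{2n+1}((R,\Delta),(I,\Omega))^{\tau}=\U_{2n+1}((R,\Delta),(I,\Omega))$, using once more that the principal congruence subgroup is normalised by $\EU_{2n+1}(R,\Delta)$. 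Hence $\sigma^{\tau}\in\CU_{2n+1}((R,\Delta),(I,\Omega))$, as claimed.

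Granting this, the corollary follows in two lines. By Proposition \ref{propul}, $H\leq\CU_{2n+1}((R,\Delta),U(H))$; conjugating by $\tau$ and using the normality just established, $H^{\tau}\leq\CU_{2n+1}((R,\Delta),U(H))$. The minimality clause of Proposition \ref{propul}, applied to $H^{\tau}$, then yields $U(H^{\tau})\subseteq U(H)$. Replacing $H$ by $H^{\tau}$ and $\tau$ by $\tau^{-1}$ in this inequality gives $U(H)=U((H^{\tau})^{\tau^{-1}})\subseteq U(H^{\tau})$, whence $U(H)=U(H^{\tau})$.

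There is no genuine difficulty here; the only step requiring a moment's thought is the normality of $\CU_{2n+1}((R,\Delta),(I,\Omega))$ under $\EU_{2n+1}(R,\Delta)$, which in turn rests on the already cited normality of the principal congruence subgroup and the identity $[\sigma^{\tau},E^{\tau}]=[\sigma,E]^{\tau}$ for conjugation by $\tau$.
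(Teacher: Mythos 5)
Your proof is correct and follows essentially the same route as the paper: both arguments rest on the extremal characterisation of $U(H)$ from Proposition \ref{propul} together with the fact that $\CU_{2n+1}((R,\Delta),U(H))$ is stable under conjugation by $\EU_{2n+1}(R,\Delta)$, which the paper attributes more tersely to $\EU_{2n+1}(R,\Delta)\leq\NU_{2n+1}((R,\Delta),U(H))$ via \cite[Corollary 36]{bak-preusser}. Your explicit verification of that stability (via $[\sigma^{\tau},\EU^{\tau}]=[\sigma,\EU]^{\tau}$ and the $\EU$-invariance of the principal congruence subgroup) is a sound filling-in of a step the paper leaves implicit.
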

\begin{proof}
By Proposition \ref{propul} we have $H\leq\CU_{2n+1}((R,\Delta),U(H))$. It follows that $H^\tau\leq\CU_{2n+1}((R,\Delta),U(H))$ since $\EU_{2n+1}(R,\Delta)\leq \NU_{2n+1}((R,\Delta),U(H))$ by \cite[Corollary 36]{bak-preusser}. This implies that $U(H^{\tau})\subseteq U(H)$, again by Proposition \ref{propul}. Similarly one can show that $U(H)\subseteq U(H^{\tau})$.
\end{proof}

\section{The subnormal structure of the groups $\U_7(R,\Delta)$}

In this section $(R,\Delta)$ denotes a Hermitian form ring where $R$ is commutative.

\begin{lemma}\label{lemsub1}
Let $(I,\Omega)$ be an odd form ideal and $H$ a subgroup of $\U_{7}(R,\Delta)$ normalised by $\EU_{7}(I,\Omega)$. Suppose there is an $x\in R$, $r,s\in\Theta_{\hb}, r\neq \pm s$ and an $m\in\N$ such that $T_{r,\pm s}(xa)\in H$ for all $a\in I^m$. Then $T_{ij}(xa)\in H$ for all $i,j\in \Theta_{\hb}, i\neq\pm j$ and $a\in I^{m+3}$.
\end{lemma}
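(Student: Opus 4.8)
The plan is to start from the single family of transvections $T_{r,\pm s}(xa)$, $a\in I^m$, that is assumed to lie in $H$, and to ``move it around'' the index set $\Theta_{\hb}$ using commutators with $(I,\Omega)$-elementary transvections (which normalise $H$). The first step is to reduce to the case $T_{rs}(xa)\in H$: if instead we only know $T_{r,-s}(xa)\in H$, then by relation (S1) this is the same as $T_{s,-r}(\pm\overline{xa}\,\lambda^{\ast})\in H$, so after renaming we may assume $T_{rs}(xc)\in H$ for all $c$ in the ideal $I^m$ (possibly replacing $x$ by a related element $\pm\bar x\lambda^{\ast}$, which is harmless since the conclusion is a statement about \emph{all} $i,j$). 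Fix indices $i,j\in\Theta_{\hb}$ with $i\neq\pm j$; the goal is $T_{ij}(xa)\in H$ for $a\in I^{m+3}$.

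The core of the argument is a three-step ``relay'' through the root system of $\U_7$. Since $n=3$, the set $\Theta_{\hb}$ has six elements, and for any prescribed pair $(i,j)$ with $i\ne\pm j$ one can find a short chain of indices connecting $(r,s)$ to $(i,j)$ in which consecutive short root transvections have the commutation pattern needed to apply (S4), $[T_{ab}(y),T_{bc}(z)]=T_{ac}(yz)$ when $a\ne\pm c$. Concretely: pick an index $p\ne \pm r,\pm s$ and an auxiliary generator; writing $a=c_1c_2c_3$ with $c_1,c_2,c_3\in I^{m+?}$ (using that $I^{m+3}=I^m\cdot I\cdot I\cdot I$, absorbing the powers appropriately), one forms a commutator of $T_{rs}(xc_1)\in H$ with an $(I,\Omega)$-elementary $T_{s,p}(c_2)$ to land in $T_{r,p}(x c_1 c_2)\in H$ (using that $H$ is normalised by $\EU_7(I,\Omega)$, so conjugates and hence commutators with elements of $\EU_7(I,\Omega)$ stay in $H$), then repeat once more to change the first index, and a third commutator to fix up the remaining index; each step consumes one factor from $I$, so three steps consume $I^3$ and explains the shift $m\mapsto m+3$. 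One must handle separately the degenerate cases where $(i,j)$ shares an index with $(r,s)$ (fewer relay steps needed) and where the only available intermediate index forces going through a $\pm$ collision — there relations (S3), (S4), and if necessary (S5) together with the behaviour of long root transvections are used to route around the obstruction, still within three steps because $\U_7$ has just enough room (this is exactly why the bound is $m+3$ for $n=3$ rather than something smaller).

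The main obstacle I expect is the bookkeeping of the collision cases: because $n=3$ is the smallest rank allowed, the index set is tight, and for certain target pairs $(i,j)$ there is essentially only one admissible intermediate index, so one has to check carefully that a valid relay of length $\le 3$ always exists and that every commutator used is of the form [element of $H$, element of $\EU_7(I,\Omega)$] (so that normality of $H$ under $\EU_7(I,\Omega)$ applies) and evaluates, via the Steinberg-type relations of Lemma \ref{39}, to the desired $T_{ij}$ of an element of $I^{m+3}$ — with all the $\lambda$-twists from (S1) tracked correctly. Once the short root transvections $T_{ij}(xa)$ for $a\in I^{m+3}$ are in $H$ for every admissible pair $(i,j)$, the proof is complete; no extra-short or long root transvections need to be produced here, since the statement only concerns short root transvections $T_{ij}$.
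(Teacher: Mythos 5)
Your overall strategy---propagating the given transvections to all positions $(i,j)$ by iterated applications of relation (S4) against $(I,\Omega)$-elementary transvections, each step costing one factor of $I$---is exactly the paper's approach (the paper fixes a third index $t\neq\pm r,\pm s$ and reaches every admissible pair in at most three such steps). But your opening ``reduction'' introduces a genuine gap. The hypothesis $T_{r,\pm s}(xa)\in H$ gives you \emph{both} $T_{r,s}(xa)$ and $T_{r,-s}(xa)$; by rewriting one of them via (S1) and ``renaming'' you (i) discard half of the hypothesis and (ii) replace $x$ by $\pm\bar x\lambda^{\ast}$, which is not harmless: the conclusion asserts $T_{ij}(xa)\in H$ for the \emph{original} $x$, and since the involution on $R$ need not be trivial, producing $T_{ij}(\bar x a)$ is a different statement that cannot be converted back.

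More seriously, your claim that ``a valid relay of length $\le 3$ always exists'' fails once you start from a single pair. Model the situation by the graph on ordered pairs $(i,j)$, $i\neq\pm j$, in which one (S4)-move replaces either index by any $k\neq\pm i,\pm j$; for $n=3$ a direct enumeration shows that all $23$ other pairs are reached from $(r,s)$ within three moves \emph{except} $(-r,-s)$, which first appears at distance $4$. Relation (S1) cannot shortcut this (it turns $x$ into $\bar x$ up to units), and (S5) only produces long-root elements, which none of the relations in Lemma \ref{39} convert back into short-root transvections with coefficient in $xI^{m+3}$. So after your reduction the best available bound for $T_{-r,-s}$ is $a\in I^{m+4}$, not $I^{m+3}$. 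The stated exponent is attainable precisely because both starting transvections are used: keeping $T_{r,s}(xI^m)$ and $T_{r,-s}(xI^m)$, every admissible pair lies within three (S4)-steps of one of them (this is what the paper's three bullet points record), and with that correction the rest of your plan goes through.
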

\begin{proof}
Choose a $t\in \Theta_{\hb}$ such that $t\neq \pm r, \pm s$. It follows from Relation (S4) in Lemma \ref{39} that
\begin{itemize}
\item $T_{r,\pm t}(xa),T_{\pm t ,\pm s}(xa)\in H$ for any $a\in I^{m+1}$,
\item $T_{\pm s ,\pm t}(xa),T_{\pm t ,\pm r}(xa),T_{-r ,\pm s}(xa)\in H$ for any $a\in I^{m+2}$ and
\item $T_{\pm s,\pm r}(xa),T_{-r ,\pm t}(xa)\in H$ for any $a\in I^{m+3}$.
\end{itemize}
The assertion of the lemma follows.
\end{proof}

We recall some notation introduced in \cite{preusser_decomp_almcom}. Let $G$ be a group and $(a_1,b_1), (a_2,b_2)\in G\times G$. If there is a $g\in G$ such that 
\[a_2=[a_1^{-1},g]\text{ and }b_2=[g,b_1],\]
then we write $(a_1,b_1)\xrightarrow{g} (a_2,b_2)$. If $(a_1,b_1),\dots, (a_{n+1},b_{n+1})\in G\times G$ and $g_1,\dots,g_{n}\in G$  such that 
\[(a_1,b_1) \xrightarrow{g_1} (a_2,b_2)\xrightarrow{g_2}\dots \xrightarrow{g_{n}} (a_{n+1},b_{n+1}),\]
then we write $(a_1,b_1) \xrightarrow{g_1,\dots,g_{n}}(a_{n+1},b_{n+1})$. 

If $H\leq G$, $g\in G$ and $h\in H$, then we call $g^h$ an {\it $H$-conjugate} of $g$.
\begin{lemma}[{\cite[Lemma 7]{preusser_decomp_almcom}}]\label{lemredux}
Let $G$ be a group and $(a_1,b_1),(a_2,b_2)\in G\times G$. If $(a_1,b_1)\xrightarrow{g_1,\dots,g_n}(a_2,b_2)$ for some $g_1,\dots,g_n\in G$, then $a_2b_2$ is a product of $2^n$ $H$-conjugates of $a_1b_1$ and $(a_1b_1)^{-1}$ where $H$ is the subgroup of $G$ generated by $\{a_1,g_1,\dots, g_n\}$.
\end{lemma}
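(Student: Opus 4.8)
The plan is to induct on $n$, the number of arrows in the chain. The base case $n=0$ is trivial: then $(a_2,b_2)=(a_1,b_1)$, so $a_2b_2=a_1b_1$ is a single ($=2^0$) $H$-conjugate of itself. For the inductive step, suppose the claim holds for chains of length $n-1$, and consider a chain
\[
(a_1,b_1)\xrightarrow{g_1}(a_2,b_2)\xrightarrow{g_2,\dots,g_n}(a_{n+1},b_{n+1}).
\]
By definition of the first arrow, $a_2=[a_1^{-1},g_1]=a_1^{-1}\cdot{}^{g_1^{-1}}\!a_1$ and $b_2={}^{g_1}b_1\cdot b_1^{-1}$ (up to the conventions fixed in the Notation section; one should check signs against $g^h=h^{-1}gh$ and $^hg=hgh^{-1}$). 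Hence
\[
a_2b_2=a_1^{-1}\cdot\bigl({}^{g_1^{-1}}\!a_1\bigr)\cdot\bigl({}^{g_1}b_1\bigr)\cdot b_1^{-1},
\]
which exhibits $a_2b_2$ as a product of two conjugates (by elements of the group generated by $a_1$ and $g_1$) of $a_1b_1$ and its inverse — more precisely $a_2b_2$ is conjugate to a product of $a_1b_1$-type and $(a_1b_1)^{-1}$-type factors; the cleanest formulation is to observe $a_2b_2 = {}^{a_1^{-1}}\!\bigl[(a_1b_1)^{-1}\cdot {}^{g_1^{-1}}\!(a_1b_1)\bigr]$ after inserting $b_1b_1^{-1}$ appropriately, so that $a_2b_2$ is a product of $2$ conjugates of $a_1b_1$ and $(a_1b_1)^{-1}$ by elements of $\langle a_1,g_1\rangle$.

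Now apply the induction hypothesis to the chain $(a_2,b_2)\xrightarrow{g_2,\dots,g_n}(a_{n+1},b_{n+1})$: the element $a_{n+1}b_{n+1}$ is a product of $2^{n-1}$ conjugates of $a_2b_2$ and $(a_2b_2)^{-1}$, the conjugating elements lying in $\langle a_2,g_2,\dots,g_n\rangle$. Substituting the expression for $a_2b_2$ (and for its inverse) obtained above, each of these $2^{n-1}$ factors becomes a product of $2$ conjugates of $a_1b_1^{\pm1}$, so $a_{n+1}b_{n+1}$ becomes a product of $2^{n-1}\cdot 2=2^n$ such conjugates. The conjugating elements now range over products of elements of $\langle a_2,g_2,\dots,g_n\rangle$ with elements of $\langle a_1,g_1\rangle$; since $a_2=[a_1^{-1},g_1]\in\langle a_1,g_1\rangle$, all of these lie in $H=\langle a_1,g_1,\dots,g_n\rangle$, which is exactly the bound we want.

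The only genuinely delicate point is bookkeeping the commutator conventions and making sure the conjugating elements at each stage are tracked inside the correct subgroup — in particular that the $a_2$ appearing as a generator in the inductive hypothesis is absorbed into $\langle a_1,g_1\rangle\subseteq H$, so that no generators outside $\{a_1,g_1,\dots,g_n\}$ are introduced. Everything else is a routine manipulation of commutator identities of the form $[x,y]=x\cdot{}^{y^{-1}}\!x^{-1}$ and conjugation-distributes-over-products, so I would state the one-arrow identity carefully as a displayed formula and then let the induction run. (This is exactly the argument of \cite[Lemma 7]{preusser_decomp_almcom}, reproduced here for the reader's convenience.)
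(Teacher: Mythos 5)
The paper offers no proof of this lemma at all --- it is imported verbatim by citation from \cite[Lemma 7]{preusser_decomp_almcom} --- so there is nothing in-paper to compare against; your induction on $n$ is the right reconstruction and its architecture is sound. However, both of your displayed identities for the one-arrow case are off by a sign in the conjugating element, and since the entire lemma rests on this single computation you should not leave it at ``one should check signs.'' With the paper's conventions $[g,h]=ghg^{-1}h^{-1}$ and ${}^hg=hgh^{-1}$ one has $a_2=[a_1^{-1},g_1]=a_1^{-1}\cdot{}^{g_1}a_1$ (not $a_1^{-1}\cdot{}^{g_1^{-1}}a_1$, which is $[a_1^{-1},g_1^{-1}]$) and $b_2=[g_1,b_1]={}^{g_1}b_1\cdot b_1^{-1}$, so the product telescopes:
\[
a_2b_2=a_1^{-1}\cdot{}^{g_1}a_1\cdot{}^{g_1}b_1\cdot b_1^{-1}=a_1^{-1}\cdot{}^{g_1}(a_1b_1)\cdot b_1^{-1}={}^{a_1^{-1}g_1}(a_1b_1)\cdot{}^{a_1^{-1}}\bigl((a_1b_1)^{-1}\bigr),
\]
a product of two conjugates of $a_1b_1$ and $(a_1b_1)^{-1}$ by elements of $\langle a_1,g_1\rangle$. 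Your ``cleanest formulation'' ${}^{a_1^{-1}}\bigl[(a_1b_1)^{-1}\cdot{}^{g_1^{-1}}(a_1b_1)\bigr]$ is \emph{not} equal to $a_2b_2$ in a general group (test $b_1=e$); the correct bracketed form is ${}^{a_1^{-1}}\bigl[{}^{g_1}(a_1b_1)\cdot(a_1b_1)^{-1}\bigr]$. Once this identity is fixed, the inductive step goes through exactly as you describe: the induction hypothesis writes $a_{n+1}b_{n+1}$ as a product of $2^{n-1}$ conjugates of $(a_2b_2)^{\pm1}$ by elements of $\langle a_2,g_2,\dots,g_n\rangle$, each such factor splits into two conjugates of $(a_1b_1)^{\pm1}$, and every conjugator lands in $H=\langle a_1,g_1,\dots,g_n\rangle$ because $a_2=[a_1^{-1},g_1]\in\langle a_1,g_1\rangle$; this yields the count $2^n$ and the correct subgroup.
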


\begin{lemma}\label{lemsub2}
Let $(I,\Omega)$ be an odd form ideal and $H$ a subgroup of $\U_{7}(R,\Delta)$ normalised by $\EU_{7}(I,\Omega)$. Let $\sigma\in H$ and $r,s,t
\in\Theta_{\hb}$ such that $r\neq \pm s$ and $t\neq\pm r,\pm s$. Then 
\begin{enumerate}[(i)]
\item $T_{ij}(\sigma_{s,-t}\bar\sigma_{sr}a)\in H$ for any $i,j\in \Theta_{\hb}, i\neq \pm j, a\in I^7$ and
\item $T_{ij}(\sigma_{rs}\bar\sigma_{rr}a)\in H$ for any $i,j\in \Theta_{\hb}, i\neq \pm j, a\in I^8$.
\end{enumerate}
\end{lemma}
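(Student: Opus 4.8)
The plan is to use the notation introduced just before the statement together with Lemma~\ref{lemredux}, producing the required transvections as products of $H$-conjugates. The key point is to find, for a suitable matrix commutator identity, a chain
\[(a_1,b_1)\xrightarrow{g_1,\dots,g_m}(a_2,b_2)\]
in which $a_1b_1$ is $\sigma$ (or an $\EU_7(I,\Omega)$-conjugate of $\sigma$, which still lies in $H$ since $H$ is normalised by $\EU_7(I,\Omega)$), each $g_\ell$ is an $(I,\Omega)$-elementary transvection or lies in the subgroup generated by $a_1$ and the earlier $g_j$, and $a_2b_2$ is the desired transvection $T_{ij}(\sigma_{s,-t}\bar\sigma_{sr}a)$, resp.\ $T_{ij}(\sigma_{rs}\bar\sigma_{rr}a)$. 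Once such a chain is exhibited, Lemma~\ref{lemredux} shows the target transvection is a product of $2^m$ conjugates of $\sigma^{\pm1}$ by elements of a subgroup $K$ generated by $a_1$ and the $g_\ell$'s; since $a_1\in H$ and the $g_\ell$'s either lie in $\EU_7(I,\Omega)$ (which normalises $H$) or in $K$ itself, all these conjugates lie in $H$, hence the target transvection lies in $H$.

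For part~(i), the first step is to extract the matrix entry $\sigma_{s,-t}$ and $\bar\sigma_{sr}$: conjugating $\sigma$ by $(I,\Omega)$-elementary short root transvections $T_{*,*}(a)$ with $a\in I^{\text{small power}}$ acts on columns/rows of $\sigma$ and, via the commutator relations (S3)--(S5), (SE1)--(SE2), (E1)--(E3) of Lemma~\ref{39}, produces a new element of $H$ whose "hyperbolic part" is an elementary transvection with parameter built from the prescribed entries of $\sigma$. Concretely, I would commute $\sigma$ with $T_{r,-s}(a)$ for $a\in I$, then with another transvection $T_{t,r}(b)$ or similar, chosen so that the only surviving off-diagonal contribution after two or three commutators is a single short root transvection $T_{r',s'}(\sigma_{s,-t}\bar\sigma_{sr}c)$ with $c$ running over a power $I^{k_0}$ of $I$; since each commutator with an $(I,\Omega)$-elementary transvection multiplies the relevant level by at least one factor of $I$, and the polarity/unitarity relations of Lemma~\ref{36} force the symmetric entry to match, after the bookkeeping we land at level $I^{k_0}$ with $k_0\le 4$. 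Then Lemma~\ref{lemsub1} (which costs $+3$ in the exponent) upgrades this to $T_{ij}(\sigma_{s,-t}\bar\sigma_{sr}a)\in H$ for all $i,j$ with $i\neq\pm j$ and $a\in I^7$, giving the claimed exponent~$7$.

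For part~(ii), the plan is to iterate: start from part~(i), which now gives us a large supply of $H$-elements of the form $T_{ij}(\sigma_{s,-t}\bar\sigma_{sr}a)$, and combine one more matrix-entry extraction to convert $\sigma_{s,-t}\bar\sigma_{sr}$ into $\sigma_{rs}\bar\sigma_{rr}$. The cleanest route is to observe that conjugating $\sigma$ (or commuting $\sigma$ with an $(I,\Omega)$-elementary transvection whose level parameter is already a product like $\sigma_{s,-t}\bar\sigma_{sr}a$ — which is available in $H$ by part~(i) after pushing through $\EU_7(I,\Omega)$) produces a transvection whose parameter is $\sigma_{rs}\bar\sigma_{rr}$ times a further factor of $I$; this accounts for the single extra factor, so the exponent becomes $7+1=8$. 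Alternatively one applies part~(i) with the roles of the indices permuted and uses one application of Lemma~\ref{lemsub1} or a single commutator relation (S4) to shift the index, again at the cost of one factor of $I$.

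\textbf{Main obstacle.} The hard part will be the combinatorial bookkeeping of exactly which entries of $\sigma$ survive after each commutator and verifying that the exponents of $I$ accumulate to precisely $7$ and $8$ rather than something larger — this requires careful use of the commutator relations in Lemma~\ref{39} together with the structural constraints on $\sigma^{-1}$ coming from Lemma~\ref{36}, to guarantee that the symmetric partner entries lie in the correct ideal powers and that no spurious terms of a lower ideal power appear. A secondary subtlety is ensuring that the auxiliary conjugating elements $g_\ell$ in the $\xrightarrow{}$ chain genuinely lie either in $\EU_7(I,\Omega)$ or in the subgroup generated by $\sigma$ and the previous $g_j$, so that Lemma~\ref{lemredux} can be applied cleanly and all resulting conjugates stay inside $H$.
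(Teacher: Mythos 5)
Your overall framework is the right one -- you invoke Lemma~\ref{lemredux}, you plan a chain $(a_1,b_1)\xrightarrow{g_1,\dots,g_m}(a_2,b_2)$ with $a_1b_1\in H$ and all conjugators in $\EU_7(I,\Omega)$, and your exponent accounting ($4+3=7$ for (i), one more factor for (ii)) agrees with the paper's. But the essential idea that makes the chain actually terminate in a \emph{single} transvection is missing. The paper does not start the chain from $\sigma$ or from an iterated commutator of $\sigma$ with elementary transvections; it first constructs a specific element
$\tau_1=T_{rt}(\sigma_{ss}\bar\sigma_{sr}\bar a_1)\,T_{st}(-\sigma_{sr}\bar\sigma_{sr}\bar a_1)\,T_{r,-s}(\cdots)\,T_r(0,\cdots)\in\EU_7(I,\Omega)$,
whose four parameters are matched to the entries of the $s$-th row of $\sigma$ precisely so that $\xi_1:=\sigma\tau_1^{-1}\sigma^{-1}$ satisfies $(\xi_1)_{s*}=e_{s*}$ and $(\xi_1)_{*,-s}=e_{*,-s}$. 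It is this vanishing of a whole row and column of $\xi_1$ that forces each subsequent commutator $[T_{-s,t}(a_2),\,\cdot\,]$, $[T_{-s,r}(a_3),\,\cdot\,]$, $[T_{t,\pm r}(\cdot a_4),\,\cdot\,]$ in the chain to collapse to a single short root transvection, picking up exactly one new factor of $I$ each time and ending at $T_{-s,\pm r}(\sigma_{s,-t}\bar\sigma_{sr}a_1a_2a_3a_4)$ at level $I^4$. Your proposed substitute -- ``commute $\sigma$ with $T_{r,-s}(a)$, then with $T_{t,r}(b)$, so that the only surviving off-diagonal contribution is a single transvection'' -- does not work: $[\sigma,T_{r,-s}(a)]$ is a full $7\times 7$ matrix whose entries mix entries of $\sigma$ and $\sigma^{-1}$, and no bounded number of further commutators with elementary transvections isolates the product $\sigma_{s,-t}\bar\sigma_{sr}$. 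This is exactly the step you defer to ``combinatorial bookkeeping'', and it is the heart of the proof, not a routine verification.

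Your plan for part (ii) is also not viable as stated. You propose to obtain $\sigma_{rs}\bar\sigma_{rr}$ from part (i) by ``one more matrix-entry extraction'' costing a single extra factor of $I$; but $\sigma_{rs}\bar\sigma_{rr}$ involves a \emph{diagonal} entry of $\sigma$, which cannot be produced from the off-diagonal product $\sigma_{s,-t}\bar\sigma_{sr}$ by any of the relations (S1)--(SE2) or by an index permutation. The paper instead runs an entirely separate chain: a second element $\tau_2\in\EU_7(I,\Omega)$ built from the $r$-th row of $\sigma$ (so that $\xi_2=\sigma\tau_2^{-1}\sigma^{-1}$ has trivial $r$-row and $-r$-column), followed by a four-step chain with conjugators $T_{sr}(a_2),T_{tr}(a_3),T_{r,-t}(a_4),T_{-r,s}(a_5)$, landing at $T_{-r,-t}(\sigma_{rs}\bar\sigma_{rr}a_1\cdots a_5)$ at level $I^5$, and then Lemma~\ref{lemsub1} gives $I^{5+3}=I^8$. (The longer chain is needed because in $\U_7$ there are only three pairs $\pm 1,\pm 2,\pm 3$ of hyperbolic indices, so reaching a position from which Lemma~\ref{lemsub1} applies takes an extra step.) So both halves of your argument rest on an unconstructed object; you need to exhibit $\tau_1$ and $\tau_2$ explicitly and verify the row/column annihilation before the rest of your outline can be carried through.
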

\begin{proof}
Let $a_1,\dots,a_5\in I$. Set 
\begin{align*}
\tau_1:=&T_{rt}(\sigma_{ss}\bar\sigma_{sr}\bar a_1)T_{st}(-\sigma_{sr}\bar\sigma_{sr}\bar a_1)T_{r,-s}(-\lambda^{(\epsilon(t)-\epsilon(s))/2}\sigma_{s,-t}\bar\sigma_{sr}a_1)\cdot\\&\cdot T_{r}(0,\lambda^{(\epsilon(t)-\epsilon(r))/2}\sigma_{s,-t}\bar\sigma_{ss} a_1-\lambda^{(-\epsilon(t)-\epsilon(r))/2}\sigma_{ss}\bar\sigma_{s,-t}\bar a_1)
\end{align*}
and
\begin{align*}
\tau_2:=&T_{rt}(\sigma_{rs}\bar\sigma_{rr}a_1)T_{st}(-\sigma_{rr}\bar\sigma_{rr}a_1)T_{r,-s}(-\lambda^{(\epsilon(t)-\epsilon(s))/2}\sigma_{r,-t}\bar\sigma_{rr}\bar a_1)\cdot\\&\cdot T_{r}(0,\lambda^{(\epsilon(t)-\epsilon(r))/2}\sigma_{r,-t}\bar\sigma_{rs}\bar a_1-\lambda^{(-\epsilon(t)-\epsilon(r))/2}\sigma_{rs}\bar\sigma_{r,-t}a_1).
\end{align*}
Clearly $\tau_1,\tau_2\in \EU_{7}(I,\Omega)$. Set $\xi_1:=\sigma\tau_1^{-1}\sigma^{-1}$ and $\xi_2:=\sigma\tau_2^{-1}\sigma^{-1}$. One checks easily that $(\sigma\tau_1^{-1})_{s*}=\sigma_{s*}$ and $(\tau_1^{-1}\sigma^{-1})_{*,-s}=\sigma'_{*,-s}$. Hence $(\xi_1)_{s*}=e_{s*}$ and $(\xi_1)_{*,-s}=e_{*,-s}$. Similarly $(\sigma\tau_2^{-1})_{r*}=\sigma_{r*}$ and $(\tau_2^{-1}\sigma^{-1})_{*,-r}=\sigma'_{*,-r}$. Hence $(\xi_2)_{r*}=e_{r*}$ and $(\xi_2)_{*,-r}=e_{*,-r}$. A straightforward computation shows that 
\[(\tau_1,\xi_1)\xrightarrow{T_{-s,t}(a_2),T_{-s,r}(a_3),T_{t,\pm r}(-\lambda^{(\epsilon(s)-\epsilon(t))/2}a_4)}(T_{-s,\pm r}(\sigma_{s,-t}\bar\sigma_{sr}a_1a_2a_3a_4),e)\]
and
\[(\tau_2,\xi_2)\xrightarrow{T_{sr}(a_2),T_{tr}(a_3),T_{r,-t}(a_4),T_{-r,s}(a_5)}(T_{-r,-t}(\sigma_{rs}\bar\sigma_{rr}a_1a_2a_3a_4a_5),e).\]
It follows from Lemma \ref{lemredux} that $H$ contains the matrices $T_{-s,\pm r}(\sigma_{s,-t}\bar\sigma_{sr}a_1a_2a_3a_4)$ and $T_{-r,-t}(\sigma_{rs}\bar\sigma_{rr}a_1a_2a_3a_4a_5)$. Clearly $H$ also contains $T_{-r,t}(\sigma_{rs}\bar\sigma_{rr}a_1a_2a_3a_4a_5)$ (just replace $t$ by $-t$ in the argument above). The assertion of the lemma follows from Lemma \ref{lemsub1}. 
\end{proof}

\begin{lemma}\label{lemsub3}
Let $(I,\Omega)$ be an odd form ideal and $H$ a subgroup of $\U_{7}(R,\Delta)$ normalised by $\EU_{7}(I,\Omega)$. Then $T_{ij}(\sigma_{rs}a)\in H$ for any $i,j\in \Theta_{\hb}, i\neq \pm j$, $\sigma\in H$, $r,s
\in\Theta_{\hb},r\neq \pm s$ and $a\in I^9$.
\end{lemma}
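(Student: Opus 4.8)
\emph{Proof idea.} The plan is to derive Lemma~\ref{lemsub3} from Lemma~\ref{lemsub2} together with the unitarity relations of Lemma~\ref{36}. Fix $\sigma\in H$ and indices $i,j,r,s$ as in the statement, and choose $t\in\Theta_{\hb}$ with $t\neq\pm r,\pm s$ (possible since $\Theta_{\hb}$ has six elements). The first step is to accumulate a stock of ``two-entry'' transvections inside $H$: applying Lemma~\ref{lemsub2}(ii) with its first two subscripts replaced by an arbitrary pair $(r,k)$, $k\neq\pm r$, gives $T_{ij}(\sigma_{rk}\bar\sigma_{rr}a)\in H$ for all $a\in I^8$; applying Lemma~\ref{lemsub2}(i), and also applying both parts of Lemma~\ref{lemsub2} to $\sigma^{-1}\in H$ and translating the entries of $\sigma^{-1}$ back to entries of $\sigma$ via Lemma~\ref{36}, yields further products $\sigma_{rk}\bar\sigma_{r\ell}$, $\sigma_{kr}\bar\sigma_{\ell r}$, $\sigma_{rk}\bar\sigma_{\ell k}$, $\sigma_{rs}\bar\sigma_{ss}$ (and their images under the involution) as arguments of transvections lying in $H$ with argument in $I^7$ or $I^8$. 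Throughout I would use that, since $I^m$ is an ideal, the argument of any transvection already placed in $H$ may be multiplied by an arbitrary element of $R$ without leaving $H$; this lets me attach an extra ring‑element factor to each of the above products at no cost in the exponent.

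The core of the argument is the identity $\sum_{k\in\Theta}\bar\sigma_{rk}\,\overline{\sigma'_{kr}}=1$, obtained by applying the involution to the relation $\sum_{k}\sigma_{rk}\sigma'_{kr}=1$ coming from $\sigma\sigma^{-1}=e$. Multiplying it by $\sigma_{rs}$ writes $\sigma_{rs}$ as a sum of terms $\sigma_{rs}\bar\sigma_{rk}\cdot c_k$ with $c_k:=\overline{\sigma'_{kr}}\in R$. For $k\in\{r,t,-t\}$ the factor $\sigma_{rs}\bar\sigma_{rk}$ is, by the previous paragraph, the argument of a transvection already in $H$, so after multiplying by $c_k$ and a power of $I$ these terms contribute transvections $T_{ij}(\sigma_{rs}\bar\sigma_{rk}c_k a)\in H$. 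The remaining indices $k\in\{s,-s,-r,0\}$ — the columns that coincide up to sign with $\pm s$, the long‑root column $-r$, and the ``extra'' column $0$ — are not of the tabulated shape and must be handled separately: here I would use the bilinear‑form relations $B(\sigma_{*p},\sigma_{*q})=B(e_p,e_q)$ together with the $0$‑row/column identities of Lemma~\ref{36} to rewrite each such term, up to a summand equal to $\sigma_{rs}$ times a unit, as a combination of the extractable two‑entry products (again allowing ring‑element factors). Finally, Relation~(S2) splices all of these into the single transvection $T_{ij}(\sigma_{rs}a)$, and Lemma~\ref{lemsub1} (applied to one convenient pair of subscripts) propagates the conclusion to all admissible $i,j$, giving $T_{ij}(\sigma_{rs}a)\in H$ for $a\in I^9$.

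I expect the main obstacle to be precisely the treatment of the exceptional columns $\{\pm s,-r,0\}$. The position $(r,-r)$ is a long‑root position, and the $0$‑row and $0$‑column carry the $\mu$‑twisting and the asymmetry between $I$ and $\tilde I$; it is in eliminating these that the passage from the exponents $7$ and $8$ produced by Lemma~\ref{lemsub2} to the exponent $9$ of the present lemma is consumed. Arranging the bookkeeping so that exactly one extra power of $I$ (rather than a cruder estimate) suffices is the delicate point, and it is here that commutativity of $R$ — which makes entrywise application of the involution a ring homomorphism on matrices and hence legitimizes passing between the relations coming from $\sigma\sigma^{-1}=e$ and $\sigma^{-1}\sigma=e$ — is used in an essential way.
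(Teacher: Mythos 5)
Your strategy --- expand $\sigma_{rs}=\sum_{k\in\Theta}\sigma_{rs}\bar\sigma_{rk}\overline{\sigma'_{kr}}$ using $\sigma\sigma^{-1}=e$ and extract each summand separately --- breaks down exactly at the point you yourself flag as the main obstacle, and the repair you sketch does not close the gap. The summands with $k\in\{r,t,-t\}$ are indeed of the shape supplied by Lemma \ref{lemsub2}, but the four remaining summands involve the products $\sigma_{rs}\bar\sigma_{rs}$, $\sigma_{rs}\bar\sigma_{r,-s}$, $\sigma_{rs}\bar\sigma_{r,-r}$ and $\sigma_{rs}\bar\sigma_{r0}$, and none of these is accessible by the tools you list. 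Lemma \ref{lemsub2}, applied to $\sigma$ or (via Lemma \ref{36}) to $\sigma^{-1}$, only yields same-row (or, after transposition through the involution, same-column) products whose two hyperbolic indices avoid $\pm$(the fixed index) and, unless one of them is the diagonal one, lie in different pairs $\{\pm u\}$; in $\U_7$ there is only the single spare pair $\{\pm t\}$, so the columns $\pm s$, $-r$ and $0$ are genuinely out of reach. The relations $B(\sigma_{*p},\sigma_{*q})=B(e_p,e_q)$ constrain sums of products of entries drawn from one column and \emph{paired rows} $i,-i$, not products of two entries of the row $r$, so they cannot convert the bad summands into good ones; and the $k=0$ summand contains the bare entry $\sigma_{r0}$, which everywhere in this theory is controlled only in the combinations $\sigma_{r0}J(\Delta)$ or $\overline{J(\Delta)}\mu\sigma_{0s}$ --- no factor of $\mu$ or $J(\Delta)$ is available here to absorb it. Finally, ``up to a summand equal to $\sigma_{rs}$ times a unit'' is circular: if the uncontrolled part of the sum is $\sigma_{rs}u$, the controlled part is $\sigma_{rs}(1-u)$, and $1-u$ need not be invertible, so you have not extracted $\sigma_{rs}$.

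The paper's proof avoids the decomposition of unity entirely. It sets $\tau:=[\sigma^{-1},T_{tr}(-\bar\sigma_{rs}\bar a_0)]\in H$ for $a_0\in I$ (legitimate since $-\bar\sigma_{rs}\bar a_0\in I$, so the transvection lies in $\EU_7(I,\Omega)$, which normalises $H$), and computes from Lemma \ref{36} that $\tau_{tt}\equiv 1$ and $\tau_{tr}\equiv\bar\sigma_{rs}\bar a_0$ modulo the involution invariant ideal $J$ generated by $a_0\sigma_{rs}\bar\sigma_{rr}$ and $a_0\sigma_{rs}\bar\sigma_{r,\pm t}$ --- all of which \emph{are} of the shape covered by Lemma \ref{lemsub2}. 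Applying Lemma \ref{lemsub2}(ii) to $\tau$ (row $t$, column $r$) and Relation (S1) gives $T_{ij}(\tau_{tt}\bar\tau_{tr}a)\in H$ for $a\in I^8$; writing $\tau_{tt}\bar\tau_{tr}a=\sigma_{rs}a_0a+xa$ with $x\in J$ and peeling off the $J$-part (already in $H$) yields the claim with exponent $9=1+8$. Note in particular that the extra power of $I$ comes from the auxiliary parameter $a_0\in I$ inserted into the commutator, not from a treatment of the exceptional columns as you surmise. Without some device of this kind --- producing a new element of $H$ whose $(t,r)$-entry is $\bar\sigma_{rs}\bar a_0$ up to already-extractable error terms --- your outline cannot be completed.
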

\begin{proof}
Choose a $t\in \Theta_{hb}$ such that $t\neq\pm r,\pm s$. Let $a_0\in I$ and set 
\[\tau:=[\sigma^{-1},T_{tr}(-\bar\sigma_{rs}\bar a_0)]\in H.\]
Let $J$ be the involution invariant ideal generated by the set $\{a_0\sigma_{rs}\bar\sigma_{rr},a_0\sigma_{rs}\bar\sigma_{r,\pm t}\}$.
Clearly
\begin{align*}
\tau_{tt}&=1-\sigma'_{tt}\bar\sigma_{rs}\bar a_0\sigma_{rt}+\sigma_{t,-r}'\lambda^{(\epsilon(r)-\epsilon(t))/2}\sigma_{rs}a_0\sigma_{-t,t}\\
&=1-\sigma'_{tt}\underbrace{\bar\sigma_{rs}\bar a_0\sigma_{rt}}_{\in J}+\lambda^{-\epsilon(t)}\underbrace{\bar\sigma_{r,-t}\sigma_{rs} a_0}_{\in J}\sigma_{-t,t}\\
&\equiv 1\bmod J
\end{align*}
and 
\begin{align*}
\tau_{tr}&=-\sigma'_{tt}\bar\sigma_{rs}\bar a_0\sigma_{rr}+\sigma_{t,-r}'\lambda^{(\epsilon(r)-\epsilon(t))/2}\sigma_{rs}a_0\sigma_{-t,r}+\tau_{tt}\bar\sigma_{rs}\bar a_0\\
&=-\sigma'_{tt}\underbrace{\bar\sigma_{rs}\bar a_0\sigma_{rr}}_{\in J}+\lambda^{-\epsilon(t)}\underbrace{\bar\sigma_{r,-t}\sigma_{rs} a_0}_{\in J}\sigma_{-t,r}+\tau_{tt}\bar\sigma_{rs}\bar a_0\\
&\equiv \bar\sigma_{rs}\bar a_0\bmod J
\end{align*}
by Lemma \ref{36}. Hence $\tau_{tt}\bar\tau_{tr}\equiv \sigma_{rs}a_0\bmod J$. By Lemma \ref{lemsub2}(ii) we have $T_{ij}(\tau_{tr}\bar\tau_{tt}a)\in H$ for any $i,j\in \Theta_{\hb}, i\neq \pm j, a\in I^8$ whence $T_{ij}(\tau_{tt}\bar\tau_{tr}a)\in H$ for any $i,j\in \Theta_{\hb}, i\neq \pm j, a\in I^8$ (because of Relation (S1) in Lemma \ref{39}). Since $\tau_{tt}\bar\tau_{tr}a= \sigma_{rs}a_0a+xa$ for some $x\in J$, we obtain the assertion of the lemma in view of Lemma \ref{lemsub2}.
\end{proof}

\begin{lemma}\label{lemsub4}
Let $(I,\Omega)$ be an odd form ideal and $H$ a subgroup of $\U_{7}(R,\Delta)$ normalised by $\EU_{7}(I,\Omega)$. Let $(J,\Sigma)=U(H)$. Then 
$\EU_{7}(J I^{12},\Omega_{\min}^{J I^{12}})\leq H$.
\end{lemma}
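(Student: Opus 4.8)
The plan is to show that $\EU_7(JI^{12},\Omega_{\min}^{JI^{12}})$ is generated by $(JI^{12},\Omega_{\min}^{JI^{12}})$-elementary transvections, and to prove each generator lies in $H$. Recall $U(H)=(J,\Sigma)$, so by Definition \ref{deful1} the ideal $J$ is the involution invariant ideal generated by the set $Y$ of entries $\sigma_{ij}$, differences $\sigma_{ii}-\sigma_{jj}$, and the ``mixed'' products involving $J(\Delta)$, as $\sigma$ ranges over $H$ and $i,j$ over $\Theta_{\hb}$. Since $\EU_7(I,\Omega)\leq H$ normalises $H$, and since $H$ contains elements with prescribed entries, I would use the sequence of lemmas \ref{lemsub1}--\ref{lemsub3} to ``extract'' ideal generators of $J$ as short root transvections in $H$: Lemma \ref{lemsub3} already gives $T_{ij}(\sigma_{rs}a)\in H$ for $\sigma\in H$, $r\neq\pm s$, $a\in I^9$. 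This handles the off-diagonal entries $\sigma_{rs}$ directly, at a cost of $9$ powers of $I$.

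The remaining generators of $Y$ require a little more work. For the diagonal differences $\sigma_{ii}-\sigma_{jj}$: commuting $\sigma\in H$ with a short root transvection $T_{ij}(a_0)$ (with $a_0\in I$) produces $[\sigma,T_{ij}(a_0)]\in H$ whose off-diagonal entries pick up terms of the form $(\sigma_{ii}-\sigma_{jj})a_0$ modulo the ideal already generated by the off-diagonal entries times $I$; then applying Lemma \ref{lemsub3} to this commutator extracts $T_{kl}((\sigma_{ii}-\sigma_{jj})a)\in H$ for $a\in I^{10}$ or so. Similarly, the mixed generators $\sigma_{i0}J(\Delta)$, $\overline{J(\Delta)}\mu\sigma_{0j}$, and $\overline{J(\Delta)}\mu(\sigma_{00}-\sigma_{jj})J(\Delta)$ arise by commuting $\sigma$ with long root transvections or extra short root transvections $T_i(x,y)$ whose parameters lie in $\Omega$ (e.g. $(x,y)\in\Delta^{-\epsilon(i)}$ with $x\in J(\Delta)$-related entries), which lie in $\EU_7(I,\Omega)$ when the relevant coordinates are in $I$; the resulting commutator again has entries built from these mixed products times $I$, and Lemma \ref{lemsub3} pulls out the corresponding short root transvections. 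Tracking the worst case — presumably the mixed term requiring a commutator step plus Lemma \ref{lemsub3} — should land at exponent $12$, and once $T_{ij}(ga)\in H$ for every ideal generator $g$ of $J$ and every $a\in I^{12}$, additivity (S2) and Lemma \ref{lemsub1} give $T_{ij}(ba)\in H$ for all $b\in J$, $a\in I^{12}$, i.e. for all entries in $JI^{12}$.

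Finally I need the generators of $\Omega_{\min}^{JI^{12}}$. By definition $\Omega_{\min}^{JI^{12}}=\{(0,x-\bar x\lambda)\mid x\in JI^{12}\}\+\Delta\circ JI^{12}$. The elements $T_i(0,x-\bar x\lambda^{-\epsilon(i)})$ for $x\in JI^{12}$ are long root transvections and are obtained from short root transvections already shown to be in $H$ via the commutator relation (S5), which expresses $[T_{ij}(x),T_{j,-i}(y)]$ as a long root transvection $T_i(0,xy-\ldots)$; choosing $x,y$ with product ranging over $JI^{12}$ (splitting the $12$ powers appropriately, or absorbing one extra power if needed — one should double-check the bookkeeping here) gives all such generators. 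The elements $T_i(\alpha\circ ba)$ for $\alpha\in\Delta^{-\epsilon(i)}$, $b\in J$, $a\in I^{12}$ (the $\Delta\circ JI^{12}$ part) are handled by relation (SE2), conjugating an extra short root transvection $T_j(\alpha\ldots)$ by a short root transvection $T_{ij}(x)$ with $x\in JI^{12}$, which produces an extra short root transvection with parameter $\alpha\circ x$ up to a long root factor. Closing under the group operation $\+_{-\epsilon(i)}$ via (E1) then yields all of $\EU_7(JI^{12},\Omega_{\min}^{JI^{12}})\leq H$.

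The main obstacle I expect is the precise power accounting: each of the three tools (Lemmas \ref{lemsub1}, \ref{lemsub2}, \ref{lemsub3}) costs a fixed number of powers of $I$, and the commutator step needed to access the diagonal and mixed generators of $J$ costs one more, so one must verify that the worst chain — plausibly ``commutator with an $\Omega$-elementary transvection, then Lemma \ref{lemsub3}, then perhaps a final application of Lemma \ref{lemsub1} to spread over all index pairs'' — totals exactly $12$ and not $13$, and likewise that the $\Omega_{\min}$ generators can be realised within the same budget rather than requiring an extra power. Handling the extra short root transvection parameters (the $\Delta\circ JI^{12}$ part) carefully via (SE2) and (E1), while keeping the long-root correction terms inside $\Omega_{\min}^{JI^{12}}$, is the delicate bookkeeping point, but it is routine given Lemma \ref{39}.
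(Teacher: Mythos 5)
Your outline follows the paper's proof: use Lemma \ref{lemsub3} for the off-diagonal entries, conjugate $\sigma\in H$ by $(I,\Omega)$-elementary short and extra short root transvections to expose the remaining generators of $J$ (the diagonal differences and the mixed products with $J(\Delta)$) as matrix entries of elements of $H$, extract those with Lemma \ref{lemsub3} again, and finish the form-parameter part with (S5) and (SE2). The exponent budget you left open resolves as follows: the off-diagonal entries cost $I^{9}$; one conjugation plus Lemma \ref{lemsub3} gives $\sigma_{r,-r}$, $\sigma_{rr}-\sigma_{ss}$, $\sigma_{r0}J(\Delta)$ and $\overline{J(\Delta)}\mu\sigma_{0s}$ at cost $I^{10}$; a second conjugation gives $\overline{J(\Delta)}\mu(\sigma_{00}-\sigma_{ss})J(\Delta)$ at cost $I^{11}$. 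Hence $T_{ij}(c)\in H$ for all $c\in JI^{11}$, and the twelfth power of $I$ is spent entirely on the form parameter (e.g. (S5) applied to $T_{ij}(x)\in H$, $x\in JI^{11}$, and $T_{j,-i}(y)\in\EU_{7}(I,\Omega)$, $y\in I$, gives the long-root part $\{(0,c-\bar c\lambda)\mid c\in JI^{12}\}$) --- not on the ideal generators, as you guessed.

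The one step that, as literally described, would fail is your treatment of $\Delta\circ JI^{12}$. You propose to commutate ``an extra short root transvection $T_j(\alpha\ldots)$'' against $T_{ij}(x)$ with $x\in JI^{12}$. But $H$ is only normalised by $\EU_{7}(I,\Omega)$, so $[T_{ij}(x),T_j(y,z)]$ is only known to lie in $H$ when one factor lies in $H$ and the other is $(I,\Omega)$-elementary; a transvection $T_j(\alpha)$ with $\alpha\in\Delta^{-\epsilon(j)}$ arbitrary does not normalise $H$. The fix is to write $\Delta\circ JI^{12}=(\Delta\circ I)\circ JI^{11}$ and use $\Delta\circ I\subseteq\Omega^{I}_{\min}\subseteq\Omega$: then (SE2) applied to $T_{ij}(x)\in H$ with $x\in JI^{11}$ and the $(I,\Omega)$-elementary $T_j(\alpha\circ b)$, $b\in I$, yields --- after stripping off the short-root factor $T_{j,-i}(\cdot)$, whose parameter lies in $JI^{12}$ and which is therefore already in $H$ --- an extra short root transvection with parameter essentially $\alpha\circ(b\bar x)\in\Delta\circ JI^{12}$. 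Taking $x\in JI^{12}$ would overshoot to $JI^{13}$, while leaving $\alpha$ unscaled breaks the normalisation hypothesis; with the split above the count closes at exactly $12$. The remaining assembly via (S2), (E1) and Lemma \ref{last} is routine, as you say.
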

\begin{proof}
Let $\sigma\in \U_{2n+1}(R,\Delta)$ and $i,j,r,s\in\Theta_{\hb}$ such that $i\neq\pm j$ and $r\neq \pm s$. Furthermore, let $x,y\in J(\Delta)$. Suppose we have shown
\begin{enumerate}[(i)]
\item $T_{ij}(\sigma_{rs}a)\in H$ for any $a\in I^{9}$,
\item $T_{ij}(\sigma_{r,-r}a)\in H$ for any $a\in I^{10}$,
\item $T_{ij}(\sigma_{r0}xa)\in H$ for any $a\in I^{10}$,
\item $T_{ij}(\bar x \mu\sigma_{0s}a)\in H$ for any $a\in I^{10}$,
\item $T_{ij}((\sigma_{rr}-\sigma_{ss})a)\in H$ for any $a\in I^{10}$,
\item $T_{ij}((\sigma_{rr}-\sigma_{-r,-r})a)\in H$ for any $a\in I^{10}$ and
\item $T_{ij}(\bar x\mu (\sigma_{00}-\sigma_{ss})ya)\in H$ for any $a\in I^{11}$.
\end{enumerate}
Then it would follow that $\EU_{7}(J I^{12},\Omega_{\min}^{J I^{12}})\leq H$ in view of Relations (S5) and (SE2) in Lemma \ref{39}. Hence it suffices to show (i)-(vii) above.
\begin{enumerate}[(i)]
\item Follows from the previous lemma.
\item Let $b\in I$ and $c\in I^9$. Clearly the entry of $^{T_{sr}(b)}\sigma\in H$ at position $(s,-r)$ equals $\sigma_{r,-r}b+\sigma_{s,-r}$. It follows from the previous lemma that $T_{ij}((\sigma_{r,-r}b+\sigma_{s,-r})c)\in H$. But $T_{ij}(\sigma_{s,-r}c)\in H$, again by the previous lemma. Hence $T_{ij}(\sigma_{r,-r}bc)\in H$.
\item Let $b\in I$ and $c\in I^9$. Choose a $z$ such that $(x,z)\in \Delta^{\epsilon(s)}$ (possible since $x\in J(\Delta)$). Clearly the entry of $\sigma^{T_{-s}(xb,\bar bzb)}\in H$ at position $(r,s)$ equals $\sigma_{r0}xb+\sigma_{rs}+\sigma_{r,-s}\bar bzb$. It follows from the previous lemma that $T_{ij}((\sigma_{r0}xb+\sigma_{rs}+\sigma_{r,-s}\bar bzb)c)\in H$. But $T_{ij}(\sigma_{rs}c),T_{rs}(\sigma_{r,-s}\bar bzbc)\in H$, again by the previous lemma. It follows that $T_{ij}(\sigma_{r0}xbc)\in H$.
\item Let $b\in I$ and $c\in I^9$. Choose a $z$ such that $(x,z)\in \Delta^{\epsilon(-r)}$. Clearly the entry of $^{T_{r}(x\bar b,bz\bar b)}\sigma\in H$ at position $(r,s)$ equals $-\lambda^{-(1+\epsilon(r))/2}b\bar x\mu\sigma_{0s}+\sigma_{rs}+bz\bar b\sigma_{-r,s}$. It follows from the previous lemma that $T_{ij}((-\lambda^{-(1+\epsilon(r))/2} b\bar x\mu\sigma_{0s}+\sigma_{rs}+bz\bar b\sigma_{-r,s})c)\in H$. But $T_{ij}(\sigma_{rs}c),T_{rs}(bz\bar b\sigma_{-r,s}c)\in H$, again by the previous lemma. It follows that $T_{ij}(-\lambda^{-(1+\epsilon(r))/2}b\bar x\mu\sigma_{0s}c)\in H$.
\item Let $b\in I$ and $c\in I^9$. One checks easily that the entry of $^{T_{sr}(b)}\sigma\in H$ at position $(s,r)$ equals $(\sigma_{rr}-\sigma_{ss})b+\sigma_{sr}-\sigma_{rs}b^2$. It follows from the previous lemma that $T_{ij}(((\sigma_{rr}-\sigma_{ss})b+\sigma_{sr}-\sigma_{rs}b^2)c)\in H$. But $T_{ij}(\sigma_{sr}c),T_{rs}(-\sigma_{rs}b^2c)\in H$, again by the previous lemma. It follows that $T_{ij}((\sigma_{rr}-\sigma_{ss})bc)\in H$.
\item Follows from (v) since $T_{ij}((\sigma_{rr}-\sigma_{-r,-r})a)=T_{ij}((\sigma_{rr}-\sigma_{ss})a)T_{ij}((\sigma_{ss}-\sigma_{-r,-r})a)$.
\item Let $b\in I$ and $c\in I^{10}$. Choose a $z$ such that $(y,z)\in \Delta^{\epsilon(s)}$. One checks easily that the entry of $\sigma{}^{T_{-s}(yb,\bar bzb)}\in H$ at position $(0,s)$ equals $(\sigma_{00}-\sigma_{ss})yb+\sigma_{0s}-\sigma_{s0}y^2b^2+\sigma_{0,-s}\bar bzb-\sigma_{s,-s}yb\bar bzb$. It follows from (iv) that 
\[T_{ij}(\bar x\mu((\sigma_{00}-\sigma_{ss})yb+\sigma_{0s}-\sigma_{s0}y^2b^2+\sigma_{0,-s}\bar bzb-\sigma_{s,-s}yb\bar bzb)c)\in H.\]
But $T_{ij}(\bar x\mu\sigma_{0s}c),T_{ij}(-\bar x\mu\sigma_{s0}y^2b^2c),T_{ij}(\bar x\mu\sigma_{0,-s}\bar bzbc),T_{ij}(-\bar x\mu\sigma_{s,-s}yb\bar bzbc)\in H$ by (ii),(iii) and (iv). It follows that $T_{ij}(\bar x\mu(\sigma_{00}-\sigma_{ss})ybc)\in H$.
\end{enumerate}
\end{proof}

We introduce the following notation. If $(x,y)\in \Delta$, we set $(x,y)^{1}:=(x,y)$ and $(x,y)^{-1}:=(x,\bar\lambda y)$. Note that $\Delta$ contains the element $\minus(x,y)\circ(-1)=(-x,\lambda \bar y)\circ(-1)=(x,\lambda \bar y)$. Hence $(x,y)^{-1}\in \Delta^{-1}$. On the other hand any element of $\Delta^{-1}$ is equal to $(x,y)^{-1}$ for some $(x,y)\in\Delta$. Hence $\Delta^{-1}=\{(x,y)^{-1}\mid (x,y)\in \Delta\}$. Similarly, if $\Omega$ is a relative odd form parameter, then $\Omega^{-1}=\{(x,y)^{-1}\mid (x,y)\in \Omega\}$. One checks easily that $(\alpha\+\beta)^{-1}=\alpha^{-1}\+_{-1}\beta^{-1}$ and $(\alpha\circ x)^{-1}=\alpha^{-1}\circ x$ for any $\alpha,\beta\in \Delta$ and $x\in R$.

In the proof of Theorem \ref{thmm1} we will use the matrices $T_{*j}(u,x)$ defined below. These matrices are examples of ESD transvections, cf. \cite{petrov}.

\begin{definition}\label{defesd}
Let $(I,\Omega)$ be an odd form ideal and $j\in \Theta_{hb}$. Moreover, let $u\in M$ and $x\in R$ such that $u_i\in I$ for any $i\in\Theta_{hb}$, $u_{j}=0$ and $Q(u)^{\epsilon(j)}\+(0,x)\in\Omega^{\epsilon(j)}$. We define the matrix 
\begin{align*}
T_{*j}(u,x):=&e+ue^t_{j}- e_{-j}\lambda^{(\epsilon(j)-1)/2}\tilde u+xe^{-j,j}\\
=&(\prod\limits_{i\in \Theta_{hb}\setminus\{\pm j\}}T_{ij}(u_i))T_{-j}(Q(u)^{\epsilon(j)}\+(0,x)\+(0,u_{-j}-\lambda^{\epsilon(j)}\bar u_{-j}))\\
\in& \EU_{7}(I,\Omega).
\end{align*}
Instead of $T_{*j}(u,0)$ we may write $T_{*j}(u)$. Clearly $T_{*j}(u)^{-1}=T_{*j}(-u)$ (note that $\tilde uu=tr(Q(u))=0$ since $Q(u)\in\Omega\subseteq \Delta_{\max}$) and
\begin{align}
^{\sigma}T_{*j}(u)&=e+\sigma ue^t_{j}\sigma^{-1}- \sigma e_{-j} \lambda^{(\epsilon(j)-1)/2}\tilde u\sigma^{-1}\notag\\
&=e+ \lambda^{(-\epsilon(j)-1)/2}\sigma u\widetilde{\sigma_{*,-j}}- \lambda^{(\epsilon(j)-1)/2}\sigma_{*,-j}\widetilde{\sigma u}\label{e3}
\end{align} 
for any $\sigma\in \U_{7}(R,\Delta)$, the last equality by Lemma \ref{38}.
\end{definition}

\begin{theorem}\label{thmm1}
Let $(I,\Omega)$ be an odd form ideal and $H$ a subgroup of $\U_{7}(R,\Delta)$ normalised by $\EU_{7}(I,\Omega)$. Then 
\[\EU_{7}(U(H)\ast I^{12})\leq H\leq \CU_{7}((R,\Delta),U(H)).\]
\end{theorem}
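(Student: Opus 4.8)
The plan is to prove the two inclusions separately, with the right-hand inclusion being essentially immediate and the left-hand inclusion containing all the substance. For the upper bound, $H \le \CU_{7}((R,\Delta),U(H))$ is exactly the content of Proposition \ref{propul}, so nothing further is needed there. For the lower bound, write $(J,\Sigma) = U(H)$; by the definition of $\ast$ (Definition \ref{defpq}) we must show $\EU_7((R,\Delta),(JI^{12},\Omega_{\min}^{JI^{12}}\+\Sigma\circ I^{12})) \le H$. Since $\EU_7((R,\Delta),(JI^{12},\dots))$ is the normal closure in $\EU_7(R,\Delta)$ of the preelementary subgroup $\EU_7(JI^{12},\Omega_{\min}^{JI^{12}}\+\Sigma\circ I^{12})$, and since $H$ is normalised only by $\EU_7(I,\Omega)$ rather than all of $\EU_7(R,\Delta)$, the first thing I would do is reduce to showing that $\EU_7(JI^{12},\Omega_{\min}^{JI^{12}}\+\Sigma\circ I^{12})^{\tau}\le H$ for every $\tau\in\EU_7(R,\Delta)$. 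Using Corollary \ref{corul2} ($U(H)=U(H^{\tau})$) together with the fact that $H^{\tau}$ is normalised by $\EU_7(I,\Omega)^{\tau}=\EU_7(I,\Omega)$ (the latter because the level $(I,\Omega)$ is also unchanged — or rather, one conjugates the whole setup), it suffices to prove the single statement $\EU_7(JI^{12},\Omega_{\min}^{JI^{12}}\+\Sigma\circ I^{12})\le H$ for an arbitrary such $H$.

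The heart of the argument is then to upgrade Lemma \ref{lemsub4}, which gives $\EU_7(JI^{12},\Omega_{\min}^{JI^{12}})\le H$, to include the extra short root transvections coming from $\Sigma\circ I^{12}$. Concretely, I need to show that $T_i(\alpha\circ a)\in H$ for every $i\in\Theta_{\hb}$, every $\alpha\in\Sigma^{-\epsilon(i)}$ and every $a\in I^{12}$ (and that together with the short root transvections $T_{ij}(b)$, $b\in JI^{12}$, from Lemma \ref{lemsub4} these generate the required preelementary subgroup — here I would invoke the generation relations, noting that $\Omega_{\min}^{JI^{12}}\+\Sigma\circ I^{12}$ as a relative odd form parameter is generated over $\Omega_{\min}^{JI^{12}}$ exactly by the $\Sigma\circ I^{12}$ part). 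To produce the $T_i(\alpha\circ a)$, the natural tool is the ESD transvections $T_{*j}(u,x)$ from Definition \ref{defesd}: given $\sigma\in H$ and a column $\sigma_{*j}$, conjugating an ESD transvection $T_{*j}(v)$ by $\sigma$ via formula \eqref{e3} yields, after commutator manipulations with $(I,\Omega)$-elementary transvections and an application of Lemma \ref{lemredux}, new transvections in $H$ whose parameters involve $Q(\sigma_{*j})$. Since by Definition \ref{deful1} the module $\Sigma$ is generated modulo $\Omega_{\min}^J$ by the elements $Q(\sigma_{*j})$ (and $(Q(\sigma_{*0})\minus(1,0))\circ y$-type corrections) for $\sigma\in H$, I would extract from these ESD-conjugation identities that $T_i(Q(\sigma_{*j})\circ a)\in H$ for $a$ in a suitable power of $I$, and then clean up the level bookkeeping so that $I^{12}$ suffices, using Lemmas \ref{lemsub1}–\ref{lemsub3} to propagate short root transvections across all index pairs at the cost of a few factors of $I$.

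The main obstacle I anticipate is the level arithmetic: tracking exactly how many factors of $I$ are consumed at each step (three in Lemma \ref{lemsub1}, up to eight or nine in Lemmas \ref{lemsub2} and \ref{lemsub3}, one or two more in each bullet of Lemma \ref{lemsub4}) and arranging that the extra short root transvection generators also land inside $I^{12}$ rather than some larger power. A secondary subtlety is handling the Heisenberg-group / odd-form-parameter structure carefully: the parameters of $T_i(\cdot)$ live in $\Delta^{-\epsilon(i)}$, and one must check that $Q(\sigma_{*j})\circ a$, with the $\Omega_{\min}$-ambiguities coming from Lemma 1(ii), really does lie in $\Sigma^{-\epsilon(i)}\circ I^{12}$ up to something already known to be in $H$ by Lemma \ref{lemsub4}. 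Once these two accounting issues are dispatched, assembling the generators via the commutator relations (S1)–(S5), (E1)–(E3), (SE1)–(SE2) of Lemma \ref{39} to conclude $\EU_7(JI^{12},\Omega_{\min}^{JI^{12}}\+\Sigma\circ I^{12})\le H$, and hence $\EU_7(U(H)\ast I^{12})\le H$, is routine.
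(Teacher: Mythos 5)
Your overall strategy coincides with the paper's: establish the right-hand inclusion via Proposition \ref{propul}, get the short root part from Lemma \ref{lemsub4}, and then produce the extra short root transvections $T_i(\alpha\circ a)$ for $\alpha$ running over the generators $Q(\sigma_{*s})$ and $(Q(\sigma_{*0})\minus(1,0))\circ y\+(y,z)\minus(y,z)\circ\sigma_{ss}$ of $\Sigma$ by conjugating ESD transvections $T_{*j}(u)$ and applying Lemma \ref{lemredux}. However, there are two problems. First, your opening reduction is based on a misreading of the statement: Theorem \ref{thmm1} asserts only that the \emph{preelementary} subgroup $\EU_7(U(H)\ast I^{12})$ lies in $H$, not its normal closure $\EU_7((R,\Delta),U(H)\ast I^{12})$. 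The conjugation argument you sketch to pass to the normal closure fails under the present hypothesis, because $\EU_7(I,\Omega)$ is \emph{not} normal in $\EU_7(R,\Delta)$ (that is exactly why the paper defines $\EU_7((R,\Delta),(I,\Omega))$ as a normal closure), so $H^\tau$ is normalised only by $\EU_7(I,\Omega)^\tau\neq\EU_7(I,\Omega)$ and the inductive hypothesis does not apply to $H^\tau$. This passage to the normal closure is precisely the content of Theorem \ref{thmm2}, where the stronger hypothesis that $H$ is normalised by the relative elementary subgroup makes the conjugation trick legitimate. Since the ``single statement'' you reduce to is in fact the correct assertion of Theorem \ref{thmm1}, the detour is harmless for this theorem, but as written that step is invalid.

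Second, and more seriously, the entire technical core of the proof is deferred to ``extract from these ESD-conjugation identities'' and ``clean up the level bookkeeping.'' This is where all the difficulty lives, and it is not routine. The paper's argument requires: a specific choice of vector $u'=e_{-r}\bar\sigma_{ss}-e_{-s}\bar\sigma_{rs}$ and $u=\sigma^{-1}u'b$ with $Q(u)\in\Omega^I_{\min}$ so that $T_{*,-s}(\pm u)\in\EU_7(I,\Omega)$; multiplication of ${}^{\sigma}T_{*,-s}(-u)$ by a correction $\tau\in H$ (supplied by Lemma \ref{lemsub3}) to annihilate a row and a column before the arrow construction $\xrightarrow{T_{rt}(-c)}$ can be applied; a factorisation of the resulting product $\phi\psi$ in which all factors but one are recognised as elements of $H$, isolating $T_{-t}((Q(\sigma_{*s})\circ\sigma_{ss}\bar bc)^{\epsilon(t)})$; a second step removing the factor $\sigma_{ss}$ via $\sum_p\sigma'_{sp}\sigma_{ps}=1$ and Lemma \ref{last} at the cost of one more power of $I$; a third step handling the diagonal case $i=\pm s$ via the commutator $[\sigma,T_{sr}(b)]$ at the cost of yet another power; and a separate commutator argument with extra short root transvections for the generators $(Q(\sigma_{*0})\minus(1,0))\circ y\+(y,z)\minus(y,z)\circ\sigma_{ss}$. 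None of these constructions is indicated in your proposal, and the exponent $12$ emerges precisely from this chain ($9$ from Lemma \ref{lemsub3}, plus one power at each of the three reduction steps), so the ``bookkeeping'' cannot be separated from the argument itself. As it stands the proposal is a correct identification of the method but not a proof.
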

\begin{proof}
The inclusion $H\subseteq \CU_{7}((R,\Delta),U(H))$ holds by Proposition \ref{propul}. It remains to show the inclusion $\EU_{7}(U(H)\ast I^{12})\subseteq H$. Recall that if $U(H)=(J,\Sigma)$, then $U(H)\ast I^{12}=(JI^{12}, \Omega_{\min}^{JI^{12}}\+\Sigma\circ I^{12})$. Because of Lemma \ref{lemsub4} it suffices to show that $T_i(\alpha)\in H$ for any $i\in \Theta_{\hb}$ and $\alpha \in (\Sigma\circ I^{12})^{-\epsilon(i)}$, or equivalently 
\begin{equation}
T_i(\alpha^{-\epsilon(i)})\in H\text{ for any }i\in \Theta_{\hb}\text{ and }\alpha \in \Sigma\circ I^{12}.
\end{equation}
Recall that $\Sigma=\Omega^J_{\min}\+Z\circ R$ where 
\[Z=\{Q(\sigma_{*s}),(Q(\sigma_{*0})\minus(1,0))\circ y\+(y,z)\minus (y,z)\circ \sigma_{ss}\mid \sigma\in H, s\in \Theta_{\hb}, (y,z)\in \Delta
\}.\]
In order to prove (5) it suffices to show (i) and (ii) below (note that $\Omega^J_{\min}\circ I^{12}\subseteq \Omega^{JI^{12}}_{\min}$).
\begin{enumerate}[(i)]
\item $T_{i}((Q(\sigma_{*s})\circ a)^{-\epsilon(i)})\in H$ for any $\sigma\in H$, $i,s\in\Theta_{\hb}$ and $a\in I^{12}$.
\item $T_{i}((((Q(\sigma_{*0})\minus (1,0))\circ y \+ (y,z)\minus (y,z)\circ \sigma_{ss})\circ a)^{-\epsilon(i)})\in H$ for any $\sigma\in H$, $i,s\in\Theta_{\hb}$, $(y,z)\in \Delta$ and $a\in I^{12}$.
\end{enumerate}
We first show (i) and then (ii).
\begin{enumerate}[(i)]
\item Let $\sigma\in H$. In Step 1 below we show that $T_{i}((Q(\sigma_{*s})\circ \sigma_{ss}a)^{-\epsilon(i)})\in H$ for any $i,s\in \Theta_{\hb}$, $i\neq \pm s$ and $a\in I^{10}$. In Step 2 we use Step 1 to show that $T_{i}((Q(\sigma_{*s})\circ a)^{-\epsilon(i)})\in H$ for any $i,s\in \Theta_{\hb}$, $i\neq \pm s$ and $a\in I^{11}$. In Step 3 we use Step 2 to prove (i).\\
\\
{\bf Step 1.} Let $r,s\in \Theta_{+}, r\neq s$ and $b\in I^9$. Set 
\[u':=e_{-r}\sigma'_{-s,-s}-e_{-s}\sigma'_{-s,-r}=e_{-r}\bar\sigma_{ss}-e_{-s}\bar\sigma_{rs}\in M
\]
and $u:=\sigma^{-1}u'b\in M$. Then clearly $u_{-s}=0$. Moreover, $u_i\in I$ for any $i\in\Theta_{hb}$ and $Q(u)=Q(\sigma^{-1}u'b)=Q(\sigma^{-1}u')\circ b\in\Omega^I_{\min}$ since $Q(u')=0$ and $\sigma^{-1}$ preserves $Q$ modulo $\Delta$. Hence the matrices $T_{*,-s}(u)$ and $T_{*,-s}(-u)$ are defined and are contained in $\EU_{7}(I,\Omega)$ (see Definition \ref{defesd}). Set
\[
\xi:={}^{\sigma}T_{*,-s}(-u)\overset{(4)}{=}e-\sigma u\widetilde{\sigma_{*s}}+ \sigma_{*s} \bar\lambda\widetilde{\sigma u}=e-u'b\widetilde{\sigma_{*s}}+ \sigma_{*s}\bar\lambda \widetilde{u'b}.
\]
Choose a $t\in\Theta_{hb}$ such that $t\neq \pm r,\pm s$. Set
\[\tau:=T_{tr}(-\sigma_{ts}\sigma_{ss}\bar b)T_{ts}(\sigma_{ts}\sigma_{rs}\bar b)\]
and $\zeta:=\xi\tau$. Note that by Lemma \ref{lemsub3} we have $\tau\in H$. With a little effort one can check that $\zeta_{t*}=e_{t*}$, $\zeta_{*,-t}=e_{*,-t}$
and
\begin{align*}
\zeta_{*r}=&e_{r}+(\sigma_{*s}-e_t\sigma_{ts})\sigma_{ss}\bar b+e_{-s}(\bar\sigma_{rs}\bar\sigma_{-r,s}b\lambda-\sigma_{ts}\sigma_{ss}\bar b\bar \sigma_{rs}\bar \sigma_{-t,s}b\lambda^{(\epsilon(t)+1)/2})\\
&+e_{-r}(-\bar\sigma_{ss}\bar\sigma_{-r,s}b\lambda+\sigma_{ts}\sigma_{ss}\bar b\bar \sigma_{ss}\bar \sigma_{-t,s}b\lambda^{(\epsilon(t)+1)/2}).
\end{align*}
Let $c\in I$. Clearly 
\[(T_{*,-s}(u),\zeta)\xrightarrow{T_{rt}(-c)}(\phi,\psi)\]
where 
\begin{align*}
\phi=&[T_{*,-s}(-u),T_{rt}(-c)]\\
=&T_{s}(0,-
u_{t}\bar u_{-r}c+\overline{u_{t}\bar u_{-r}c}\bar\lambda)T_{s,-r}(\bar\lambda\bar u_{t}\bar c )T_{st}(-\bar u_{-r}c)
\end{align*}
and 
\begin{align*}
\psi=&[T_{rt}(-c),\zeta]\\
=&T_{rt}(-c)T_{*t}(\zeta_{*r}c)\\
=&T_{*t}((\zeta_{*r}-e_{*r})c,\lambda^{(\epsilon(t)-1)/2}\bar c\zeta_{-r,r}c)\\
=&(\prod\limits_{i\in \Theta_{hb}\setminus\{\pm t\}}T_{it}((\zeta_{ir}-\delta_{ir})c))T_{-t}(0,\zeta_{-t,r}c-\lambda^{\epsilon(t)}\overline \zeta_{-t,r}c)\cdot\\
&\cdot T_{-t}(Q((\zeta_{*r}-e_{*r})c)^{\epsilon(t)}\+(0,\lambda^{(\epsilon(t)-1)/2}\bar c\zeta_{-r,r}c))).
\end{align*}
It follows from Lemma \ref{lemredux} that $\phi\psi\in H$ (since $T_{*,-s}(u)\zeta= [T_{*,-s}(u),\sigma]\tau\in H$). Clearly
\begin{align}
\phi\psi=&T_{s}(0,-
u_{t}\bar u_{-r}c+\overline{u_{t}\bar u_{-r}c}\bar\lambda)T_{s,-r}(\bar\lambda\bar u_{t}\bar c )T_{st}((\zeta_{sr}-\bar u_{-r})c)\cdot\notag\\
&\cdot(\prod\limits_{i\in \Theta_{hb}\setminus\{s,\pm t\}}T_{it}((\zeta_{ir}-\delta_{ir})c))T_{-t}(0,\zeta_{-t,r}c-\lambda^{\epsilon(t)}\overline \zeta_{-t,r}c)\cdot\notag\\
&\cdot T_{-t}(Q((\zeta_{*r}-e_{*r})c)^{\epsilon(t)}\+(0,\lambda^{(\epsilon(t)-1)/2}\bar c\zeta_{-r,r}c))).
\end{align}
We leave it to the reader to deduce from Lemma \ref{lemsub3} that all the factors on the right hand side of Equation (6) except the last one are contained in $H$ (cf. the proof of Lemma \ref{lemsub4}). Since $\phi\psi\in H$, it follows that 
\[T_{-t}(Q((\zeta_{*r}-e_{*r})c)^{\epsilon(t)}\+(0,\lambda^{(\epsilon(t)-1)/2}\bar c\zeta_{-r,r}c))\in H.\]
But one checks easily that
\[Q((\zeta_{*r}-e_{*r})c)^{\epsilon(t)}\+(0,\lambda^{(\epsilon(t)-1)/2}\bar c\zeta_{-r,r}c)=(Q(\sigma_{*s})\circ\sigma_{ss}\bar bc)^{\epsilon(t)}\+(0,y-\bar y\lambda^{\epsilon(t)})\]
for some $y$ that lies in the ideal generated by $\sigma_{-r,s}\bar bc$ and $\sigma_{ts}\bar b c$. It follows that $T_{-t}((Q(\sigma_{*s})\circ\sigma_{ss}\bar bc)^{\epsilon(t)})\in H$. Thus we have shown that 
\[T_{-t}((Q(\sigma_{*s})\circ\sigma_{ss}a)^{\epsilon(t)})\in H
\]
for any $s\in\Theta_+$, $t\in \Theta_{hb},t\neq\pm s$ and $a\in I^{10}$. Analogously one can show that 
\[T_{-t}((Q(\sigma_{*s})\circ\sigma_{ss}a)^{\epsilon(t)})\in H
\]
for any $s\in\Theta_-$, $t\in \Theta_{hb},t\neq\pm s$ and $a\in I^{10}$. \\
\\
{\bf Step 2.} Let $i,s\in\Theta_{\hb},i\neq \pm s$ and $a\in I^{11}$. By Lemma \ref{last} we have
\begin{align*}
&T_{i}((Q(\sigma_{*s})\circ a)^{-\epsilon(i)})\\
=&T_{i}(Q(\sigma_{*s})^{-\epsilon(i)}\circ a)\\
=&T_{i}(Q(\sigma_{*s})^{-\epsilon(i)}\circ\sum\limits_{p=1}^{-1}\sigma'_{sp}\sigma_{ps}a)\\
=&\underbrace{\prod\limits_{p=1}^{-1}T_i(Q(\sigma_{*s})^{-\epsilon(i)}\circ\sigma'_{sp}\sigma_{ps}a)}_{X:=}\cdot\\
&\cdot\underbrace{\prod\limits_{\substack{p,q=1,\\p\succ q}}^{-1}T_i(0, \overline{\sigma'_{sp}\sigma_{ps}}aQ_2(\sigma_{*s})\sigma'_{sq}\sigma_{qs}a-\overline{\overline{\sigma'_{sp}\sigma_{ps}a}Q_2(\sigma_{*s})\sigma'_{sq}\sigma_{qs}a}\lambda^{-\epsilon(i)})}_{Y:=}.
\end{align*}
By Step 1, $T_{i}(Q(\sigma_{*s})^{-\epsilon(i)}\circ \sigma'_{ss}\sigma_{ss}a)\in H$. All the other factors of $X$ are also contained in $H$ (see the proof of Lemma \ref{lemsub4}). Similarly $Y$ is contained in $H$. It follows that $T_{i}((Q(\sigma_{*s})\circ a)^{-\epsilon(i)})\in H$. \\
\\
{\bf Step 3} By Step 2 we know that $T_{i}((Q(\sigma_{*s})\circ a)^{-\epsilon(i)})\in H$ for any $i,s\in \Theta_{\hb}$, $i\neq \pm s$ and $a\in I^{11}$. In order to show (i) it remains to show that 
\begin{equation}
T_{i}((Q(\sigma_{*s})\circ a)^{-\epsilon(i)})\in H\text{ for any }s\in \Theta_{\hb}, i\in\{ \pm s\}\text{ and }a\in I^{12}.
\end{equation}
Let $s,r\in\Theta_{hb},s\neq \pm r$, $b\in I$ and set $\tau:=[\sigma,T_{sr}(b)]\in H$. Applying Step 2 to $\tau$ we obtain $T_{i}((Q(\tau_{*r})\circ c)^{-\epsilon(i)})\in H$ for any $i\in\{\pm s\}$ and $c\in I^{11}$. We leave it to the reader to deduce from \cite[Lemma 63]{bak-preusser} that $T_{i}((Q(\sigma_{*s})\circ bc\sigma'_{rr})^{-\epsilon(i)})\in H$ for any $i\in\{\pm s\}$ and $c\in I^{11}$. It follows as in Step 2 that $T_{i}((Q(\sigma_{*s})\circ bc)^{-\epsilon(i)})\in H$ for any $i\in\{\pm s\}$ and $c\in I^{11}$. Thus (7) holds.

\item Let $\sigma\in H$, $s\in\Theta_{\hb}$, $(y,z)\in \Delta$ and $b\in I$. Set $\rho:=[\sigma,T_{s}(((y,z)\circ b)^{-\epsilon(s)})]\in H$. By Step 2 in the proof of (i) above, we have $T_{i}((Q(\rho_{*,-s})\circ c)^{-\epsilon(i)})\in H$ for any $i\neq \pm s$ and $c\in I^{11}$. We leave it to the reader to deduce from \cite[Lemma 63]{bak-preusser} that 
\[T_{i}(((Q(\sigma_{*0})\minus(1,0))\circ y\sigma'_{-s,-s}\+(y,z)\circ \sigma'_{-s,-s}\minus(y,z))\circ bc)^{-\epsilon(i)})\in H\]
for any $i\neq \pm s$ and $c\in I^{11}$. It follows as in Step 2 that
\begin{equation}
T_{i}(((Q(\sigma_{*0})\minus(1,0))\circ y\+(y,z) \minus(y,z)\circ\sigma_{-s,-s})\circ bc)^{-\epsilon(i)})\in H
\end{equation}
for any $i\neq \pm s$ and $c\in I^{11}$ (cf. the proof of Lemma \ref{lemul2}). Let $r\in \Theta_{\hb}$. One checks easily that 
\begin{equation}
(y,z)\circ\sigma_{-s,-s}=(y,z)\circ \sigma_{rr}\+ (y,z)\circ (\sigma_{-s,-s}-\sigma_{rr})\+\alpha
\end{equation}
where $\alpha=(0,\overline{\sigma_{-s,-s}-\sigma_{rr}}z\sigma_{rr}-\overline{\overline{\sigma_{-s,-s}-\sigma_{rr}}z\sigma_{rr}}\lambda)$. Clearly 
\begin{equation}
T_{i}((((y,z)\circ (\sigma_{-s,-s}-\sigma_{rr})\+\alpha)\circ bc)^{-\epsilon(i)})\in H.
\end{equation}
It follows from (8), (9) and (10) that
\begin{equation}
T_{i}(((Q(\sigma_{*0})\minus(1,0))\circ y\+(y,z) \minus(y,z)\circ\sigma_{rr})\circ bc)^{-\epsilon(i)})\in H
\end{equation}
for any $i\neq \pm s$ and $c\in I^{11}$. Since (11) holds for any $s,r\in \Theta_{hb}$, we have shown (ii).
\end{enumerate}
\end{proof}

\begin{theorem}\label{thmm2}
Let $(I,\Omega)$ be an odd form ideal and $H$ a subgroup of $\U_{7}(R,\Delta)$ normalised by $\EU_{7}((R,\Delta),(I,\Omega))$. Then 
\[\EU_{7}((R,\Delta),U(H)\ast I^{12})\leq H\leq \CU_{7}((R,\Delta),U(H)).\]
\end{theorem}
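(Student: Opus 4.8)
The plan is to bootstrap from Theorem \ref{thmm1}, which already settles the case in which $H$ is normalised only by the preelementary subgroup $\EU_{7}(I,\Omega)$, to the present situation where $H$ is normalised by the (in general larger) relative elementary subgroup $E:=\EU_{7}((R,\Delta),(I,\Omega))$. The upper inclusion $H\leq\CU_{7}((R,\Delta),U(H))$ requires nothing new: it is exactly Proposition \ref{propul}, and does not even use the normalisation hypothesis. So the whole task reduces to the lower inclusion $\EU_{7}((R,\Delta),U(H)\ast I^{12})\leq H$.

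For that, I would use the fact that, by definition, $\EU_{7}((R,\Delta),U(H)\ast I^{12})$ is the normal closure of the preelementary group $\EU_{7}(U(H)\ast I^{12})$ in $\EU_{7}(R,\Delta)$. Hence it is enough to show that $\EU_{7}(U(H)\ast I^{12})^{\tau}\leq H$, equivalently $\EU_{7}(U(H)\ast I^{12})\leq H^{\tau^{-1}}$, for every $\tau\in\EU_{7}(R,\Delta)$. Fix such a $\tau$. Since $E$ is itself a normal closure in $\EU_{7}(R,\Delta)$, it is normal in $\EU_{7}(R,\Delta)$; combined with the assumption that $E$ normalises $H$, a short conjugation computation (using $\tau^{-1}E\tau=E$) shows that $E$, and in particular its subgroup $\EU_{7}(I,\Omega)$, normalises $H^{\tau^{-1}}$ as well.

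Now I would apply Theorem \ref{thmm1} to the subgroup $H^{\tau^{-1}}$, which is legitimate by the previous step: it yields $\EU_{7}(U(H^{\tau^{-1}})\ast I^{12})\leq H^{\tau^{-1}}$. By Corollary \ref{corul2} the upper level is invariant under conjugation by elements of $\EU_{7}(R,\Delta)$, so $U(H^{\tau^{-1}})=U(H)$, and therefore $\EU_{7}(U(H)\ast I^{12})\leq H^{\tau^{-1}}$, i.e. $\EU_{7}(U(H)\ast I^{12})^{\tau}\leq H$. Since $\tau$ was an arbitrary element of $\EU_{7}(R,\Delta)$, the subgroup $H$ contains every conjugate of $\EU_{7}(U(H)\ast I^{12})$ by an element of $\EU_{7}(R,\Delta)$, hence contains their normal closure $\EU_{7}((R,\Delta),U(H)\ast I^{12})$, as claimed.

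I do not expect a genuine obstacle here: all of the real content — the exponent $12$, the explicit description of $U(H)$, and the ESD-transvection manipulations — has already been packaged into Theorem \ref{thmm1} and Corollary \ref{corul2}, so what remains is purely formal. The one point that deserves a little care is checking that normality of $E$ in $\EU_{7}(R,\Delta)$ really does carry the normalisation property of $H$ over to each conjugate $H^{\tau^{-1}}$; once that bookkeeping is done, the proof closes immediately.
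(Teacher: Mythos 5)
Your argument is correct and is essentially identical to the paper's proof: both reduce to Theorem \ref{thmm1} applied to the conjugates of $H$ by elements of $\EU_{7}(R,\Delta)$, invoke Corollary \ref{corul2} to keep the upper level fixed, and then assemble the normal closure. The only difference is cosmetic (you work with $H^{\tau^{-1}}$ where the paper works with $H^{\tau}$, and you spell out the normality bookkeeping that the paper dismisses with ``clearly'').
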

\begin{proof}
By Proposition \ref{propul} we only have to show that $\EU_{7}((R,\Delta),U(H)\ast I^{12})\leq H$. Let $\tau\in \EU_{7}(R,\Delta)$. Then clearly $H^\tau$ is normalised by $\EU_{7}((R,\Delta),(I,\Omega))$. By Corollary \ref{corul2} we have $U(H^{\tau})=U(H)$. It follows from the previous theorem that $\EU_{7}(U(H)\ast I^{12})\leq H^\tau$ whence $\EU_{7}(U(H)\ast I^{12})^{\tau^{-1}}\leq H$. Since this holds for any $\tau\in \EU_{7}(R,\Delta)$, we obtain $\EU_{7}((R,\Delta),(U(H)\ast I^{12}))\leq H$.
\end{proof}

\begin{theorem}\label{thmm3}
Suppose that $H\lhd_d G$ where $d$ is a positive integer and $G$ a subgroup of $\U_{7}(R,\Delta)$ containing $\EU_{7}(R,\Delta)$. Suppose $U(H)=(I,\Omega)$. Then \[\EU_{7}((R,\Delta),U(H)\ast I^{k})\leq H \leq \CU_{7}((R,\Delta),U(H))\]
where $k=\frac{12^d-1}{11}-1$.
\end{theorem}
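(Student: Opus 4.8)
The plan is to prove the statement by induction on $d$, applying Theorem~\ref{thmm2} repeatedly. For the base case $d=1$ the chain $H\lhd_1 G$ just says $H\lhd G$, so $H$ is normalised by $\EU_7(R,\Delta)=\EU_7((R,\Delta),(R,\Delta))$, and Theorem~\ref{thmm2} with $(I,\Omega)=(R,\Delta)$ yields $\EU_7((R,\Delta),U(H)\ast R^{12})\leq H\leq\CU_7((R,\Delta),U(H))$. Writing $U(H)=(I,\Omega)$, one has $R^{12}=R$ and $U(H)\ast R=U(H)$ (since $\Omega_{\min}^{I}\subseteq\Omega$ and $\Omega\circ R=\Omega$); as $k=\frac{12-1}{11}-1=0$ and $I^{0}=R$, this is exactly the desired conclusion for $d=1$.

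For the inductive step, let $d\geq 2$, assume the theorem for $d-1$, and fix a chain $H=H_0\lhd H_1\lhd\dots\lhd H_d=G$, so that $H_1\lhd_{d-1}G$ and $H\lhd H_1$. Put $U(H_1)=(I_1,\Omega_1)$ and $(I',\Omega'):=U(H_1)\ast I_1^{k_1}$ with $k_1=\frac{12^{d-1}-1}{11}-1$, so $I'=I_1^{k_1+1}$; by the induction hypothesis $\EU_7((R,\Delta),(I',\Omega'))\leq H_1$. Since $H\lhd H_1$, the subgroup $H$ is normalised by $\EU_7((R,\Delta),(I',\Omega'))$, so Theorem~\ref{thmm2} applied to the odd form ideal $(I',\Omega')$ gives $\EU_7((R,\Delta),U(H)\ast(I')^{12})\leq H\leq\CU_7((R,\Delta),U(H))$. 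Now $(I')^{12}=I_1^{12(k_1+1)}$, and the arithmetic identity $12(k_1+1)=12\cdot\frac{12^{d-1}-1}{11}=\frac{12^{d}-1}{11}-1=k$ shows $(I')^{12}=I_1^{k}$; hence $\EU_7((R,\Delta),U(H)\ast I_1^{k})\leq H$.

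It remains to replace $I_1$ by $I$, where $U(H)=(I,\Omega)$. Because $H\leq H_1\leq\CU_7((R,\Delta),U(H_1))$, the minimality part of Proposition~\ref{propul} forces $U(H)\subseteq U(H_1)$, in particular $I\subseteq I_1$; consequently $I^{k}\subseteq I_1^{k}$, so $U(H)\ast I^{k}\subseteq U(H)\ast I_1^{k}$ and therefore $\EU_7((R,\Delta),U(H)\ast I^{k})\leq\EU_7((R,\Delta),U(H)\ast I_1^{k})\leq H$. The upper inclusion is Proposition~\ref{propul} again, completing the induction. The only step that is more than routine bookkeeping is recognising the recursion $k_d=12(k_{d-1}+1)$ with $k_1=0$ behind the stated closed form, together with the observation that one must pass from $I_1$ back down to $I$ using monotonicity (equivalently, the minimality property) of $U$; everything else is a direct reapplication of Theorem~\ref{thmm2}.
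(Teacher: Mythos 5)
Your proof is correct and follows essentially the same route as the paper: induction on $d$ via Theorem~\ref{thmm2}, using the minimality property of $U$ from Proposition~\ref{propul} to pass from $U(H_1)$ down to $U(H)$, and the recursion $k_d=12(k_{d-1}+1)$. The only cosmetic difference is that you index the induction as $d-1\to d$ while the paper writes $d\to d+1$.
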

\begin{proof}
By Proposition \ref{propul} we only have to show that $\EU_{7}((R,\Delta),U(H)\ast I^{k})\leq H$. We proceed by induction on $d$.\\
\\
\underline{$d=1$:} If $d=1$, then $H\lhd G$ and therefore $H$ is normalised by $\EU_{7}(R,\Delta)$. It follows from the previous theorem that $\EU_{7}((R,\Delta),U(H))\leq H$ as desired (we use the convention $I^0=R$).\\
\\
\underline{$d\rightarrow d+1$:} Suppose $H\lhd_{d+1}G$, i.e. $H=H_0\lhd H_1\lhd \dots\lhd H_{d}\lhd H_{d+1}=G$ for some subgroups $H_1,\dots,H_d$ of $G$. Write $U(H)=(I,\Omega)$ and $U(H_1)=(J,\Sigma)$. By the induction assumption we have 
\begin{equation}
\EU_{7}((R,\Delta),U(H_1)\ast J^{k})\leq H_1 \leq \CU_{7}((R,\Delta),U(H_1))
\end{equation}
where $k=\frac{12^d-1}{11}-1$. Since $H\lhd H_1$, it follows that $H$ is normalised by $\EU_{7}((R,\Delta),$ $U(H_1)\ast J^{k})$. Hence
\begin{equation*}
\EU_{7}((R,\Delta),U(H)\ast J^{12(k+1)})\leq H,
\end{equation*}
by the previous theorem. It follows from (12) that $H\leq \CU_{7}((R,\Delta),$ $U(H_1))$ whence $U(H)\subseteq U(H_1)$. Therefore $I\subseteq J$ and thus we obtain \begin{equation*}
\EU_{7}((R,\Delta),U(H)\ast I^{12(k+1)})\leq H.
\end{equation*}
One checks easily that $12(k+1)=\frac{12^{d+1}-1}{11}-1$ as desired.
\end{proof}

\section{The subnormal structure of the groups $\U_{2n+1}(R,\Delta)$ where $n\geq 4$}

In this section $n$ denotes an integer greater than or equal to $4$ and $(R,\Delta)$ a Hermitian form ring where $R$ is commutative. We start by proving an analogue of Lemma \ref{lemsub2}.

\begin{lemma}\label{4lemsub2}
Let $(I,\Omega)$ be an odd form ideal and $H$ a subgroup of $\U_{2n+1}(R,\Delta)$ normalised by $\EU_{2n+1}(I,\Omega)$. Let $\sigma\in H$ and $r,s,t
\in\Theta_{\hb}$ such that $r\neq \pm s$ and $t\neq\pm r, \pm s$. Then 
\begin{enumerate}[(i)]
\item $T_{ij}(\sigma_{s,-t}\bar\sigma_{sr}a)\in H$ for any $i,j\in \Theta_{\hb}, i\neq \pm j, a\in I^6$ and
\item $T_{ij}(\sigma_{rs}\bar\sigma_{rr}a)\in H$ for any $i,j\in \Theta_{\hb}, i\neq \pm j, a\in I^6$.
\end{enumerate}
\end{lemma}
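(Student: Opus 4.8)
The plan is to follow the strategy of the proof of Lemma~\ref{lemsub2}, now taking advantage of the fact that for $n\geq 4$ there is a fourth index $t'\in\Theta_{\hb}$ with $t'\neq\pm r,\pm s,\pm t$, which is unavailable in the $\U_7$ case. Fix $\sigma\in H$ and $r,s,t$ as in the statement. For an arbitrary $a_1\in I$ I would write down explicit elements $\tau_1,\tau_2\in\EU_{2n+1}(I,\Omega)$ --- products of short and (extra) short root transvections, or ESD transvections $T_{*j}(u,x)$ analogous to those of Definition~\ref{defesd} with $u$ having all coordinates in $I$ --- chosen so that multiplying $\sigma$ on the right by $\tau_1^{-1}$ leaves the $s$-th row unchanged and multiplying $\sigma^{-1}$ on the left by $\tau_1^{-1}$ leaves the $(-s)$-th column unchanged (and symmetrically for $\tau_2$ with $r$ and $-r$ in place of $s$ and $-s$). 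Setting $\xi_i:=\sigma\tau_i^{-1}\sigma^{-1}$, this forces $(\xi_1)_{s*}=e_{s*}$, $(\xi_1)_{*,-s}=e_{*,-s}$ and $(\xi_2)_{r*}=e_{r*}$, $(\xi_2)_{*,-r}=e_{*,-r}$; moreover $\tau_i\xi_i=(\tau_i\sigma\tau_i^{-1})\sigma^{-1}\in H$ since $H$ is normalised by $\EU_{2n+1}(I,\Omega)$.

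The role of the extra index $t'$ is to let $\xi_1$ and $\xi_2$ be made sparse enough that \emph{two} commutator steps suffice to isolate a single root transvection, as opposed to the three, resp.\ four, steps used in the proof of Lemma~\ref{lemsub2}. Concretely I would exhibit chains
\[(\tau_1,\xi_1)\xrightarrow{g_1,g_2}(T_{-s,\pm r}(\sigma_{s,-t}\bar\sigma_{sr}a_1a_2a_3),e)\]
and
\[(\tau_2,\xi_2)\xrightarrow{g_1',g_2'}(T_{-r,\pm t}(\sigma_{rs}\bar\sigma_{rr}a_1a_2a_3),e),\]
where $g_1,g_2,g_1',g_2'$ are elementary transvections whose arguments may be taken in $I$ (so they lie in $\EU_{2n+1}(I,\Omega)$) and $a_2,a_3\in I$ are arbitrary. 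By Lemma~\ref{lemredux}, and since $\tau_i\xi_i\in H$ while the subgroup generated by $\tau_1,g_1,g_2$ (resp.\ $\tau_2,g_1',g_2'$) lies in $\EU_{2n+1}(I,\Omega)$ and hence normalises $H$, the matrices $T_{-s,\pm r}(\sigma_{s,-t}\bar\sigma_{sr}b)$ and $T_{-r,\pm t}(\sigma_{rs}\bar\sigma_{rr}b)$ belong to $H$ for every $b\in I^3$ (repeating the second chain with $t$ replaced by $-t$ supplies both signs there too).

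Finally I would invoke the straightforward analogue of Lemma~\ref{lemsub1} for $\U_{2n+1}$ --- its proof needs only one auxiliary index and so goes through verbatim, raising the exponent of $I$ by $3$ --- to conclude that $T_{ij}(\sigma_{s,-t}\bar\sigma_{sr}a)\in H$ and $T_{ij}(\sigma_{rs}\bar\sigma_{rr}a)\in H$ for all $i,j\in\Theta_{\hb}$ with $i\neq\pm j$ and all $a\in I^{3+3}=I^6$, which is precisely assertions (i) and (ii). The main obstacle is the elementary-matrix bookkeeping behind the first two paragraphs: pinning down $\tau_1$ and $\tau_2$ explicitly, verifying the asserted vanishing of the rows and columns of $\xi_1,\xi_2$ using the inverse formulas of Lemma~\ref{36}, and checking that the two-step chains genuinely close up with auxiliary transvections supported on $I$. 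This is exactly where the fourth index $t'$ is used, and it is what lowers the bound from the $I^7$ and $I^8$ of Lemma~\ref{lemsub2} to a uniform $I^6$.
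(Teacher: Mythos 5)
Your overall skeleton is the right one and matches the paper: take the same $\tau_1,\tau_2$ as in Lemma \ref{lemsub2}, form $\xi_i=\sigma\tau_i^{-1}\sigma^{-1}$ with trivial $s$-row and $(-s)$-column (resp. $r$-row and $(-r)$-column), run a chain of commutator moves, apply Lemma \ref{lemredux}, and then spread the resulting transvections to all positions as in Lemma \ref{lemsub1}. However, the mechanism you propose for extracting the improvement from the fourth index --- shortening the chains to \emph{two} steps, landing at the single positions $T_{-s,\pm r}$ resp.\ $T_{-r,\pm t}$ with arguments in $I^3$, and then paying $+3$ for the spreading --- is not what happens, and I do not believe it can be made to work. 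After one move, $b_2=[g_1,\xi_1]$ is an ESD-transvection-like matrix of the form $e+e_{*,-s}\alpha+\beta e_{s*}$ whose row vector $\alpha$ and column vector $\beta$ involve essentially arbitrary entries of $\xi_1^{\pm 1}$, hence of $\sigma^{\pm1}$; the only coordinates of $\alpha,\beta$ you control are those forced to vanish by $(\xi_1)_{s*}=e_{s*}$ and $(\xi_1)_{*,-s}=e_{*,-s}$. A short computation with $e^{kl}N=Ne^{kl}$ (plus the mirror term $e^{-l,-k}$) shows that no elementary transvection commutes with such a $b_2$ for generic $\sigma$, so no second move $g_2$ can make the second component equal to $e$; a third move is genuinely needed. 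This is precisely the point you flag as ``bookkeeping'' to be checked, and it is where the argument breaks.

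The paper's actual use of the extra index is different: the chains stay three steps long (for $\tau_1$ it is literally the same three-step chain as in Lemma \ref{lemsub2}, and for $\tau_2$ the fourth index shortens the chain from four steps to three), but the last move can now be taken as $T_{tu}$ resp.\ $T_{vs}$ with $u$ (and $v$) ranging over many indices, so the chain terminates at $T_{-s,u}(\sigma_{s,-t}\bar\sigma_{sr}b)$ for \emph{all} $u\neq\pm s$, resp.\ at $T_{vu}(\sigma_{rs}\bar\sigma_{rr}b)$ for enough pairs $(v,u)$, with $b\in I^4$. Having the transvection at that many positions already, the Lemma \ref{lemsub1}-type spreading costs only $+2$ instead of $+3$, giving $4+2=6$. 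With your two-position targets and an honest three-step (resp.\ four-step) chain, your accounting would give $4+3=7$ and $5+3=8$ --- exactly the bounds of Lemma \ref{lemsub2}, not the claimed $I^6$. So the gap is not cosmetic: the savings must be realized in the spreading step, not in the chain length for part (i), and only partially in the chain length for part (ii).
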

\begin{proof}
Let $a_1,\dots,a_4
\in I$. Set 
\begin{align*}
\tau_1:=&T_{rt}(\sigma_{ss}\bar\sigma_{sr}\bar a_1)T_{st}(-\sigma_{sr}\bar\sigma_{sr}\bar a_1)T_{r,-s}(-\lambda^{(\epsilon(t)-\epsilon(s))/2}\sigma_{s,-t}\bar\sigma_{sr}a_1)\cdot\\&\cdot T_{r}(0,\lambda^{(\epsilon(t)-\epsilon(r))/2}\sigma_{s,-t}\bar\sigma_{ss} a_1-\lambda^{(-\epsilon(t)-\epsilon(r))/2}\sigma_{ss}\bar\sigma_{s,-t}\bar a_1)
\end{align*}
and
\begin{align*}
\tau_2:=&T_{rt}(\sigma_{rs}\bar\sigma_{rr}a_1)T_{st}(-\sigma_{rr}\bar\sigma_{rr}a_1)T_{r,-s}(-\lambda^{(\epsilon(t)-\epsilon(s))/2}\sigma_{r,-t}\bar\sigma_{rr}\bar a_1)\cdot\\&\cdot T_{r}(0,\lambda^{(\epsilon(t)-\epsilon(r))/2}\sigma_{r,-t}\bar\sigma_{rs}\bar a_1-\lambda^{(-\epsilon(t)-\epsilon(r))/2}\sigma_{rs}\bar\sigma_{r,-t}a_1).
\end{align*}
Clearly $\tau_1,\tau_2\in \EU_{2n+1}(I,\Omega)$. Set $\xi_1:=\sigma\tau_1^{-1}\sigma^{-1}$ and $\xi_2:=\sigma\tau_2^{-1}\sigma^{-1}$. One checks easily that $(\sigma\tau_1^{-1})_{s*}=\sigma_{s*}$ and $(\tau_1^{-1}\sigma^{-1})_{*,-s}=\sigma'_{*,-s}$. Hence $(\xi_1)_{s*}=e_{s*}$ and $(\xi_1)_{*,-s}=e_{*,-s}$. Similarly $(\sigma\tau_2^{-1})_{r*}=\sigma_{r*}$ and $(\tau_2^{-1}\sigma^{-1})_{*,-r}=\sigma'_{*,-r}$. Hence $(\xi_2)_{r*}=e_{r*}$ and $(\xi_2)_{*,-r}=e_{*,-r}$. A straightforward computation shows that 
\[(\tau_1,\xi_1)\xrightarrow{T_{-s,t}(a_2),T_{-s,r}(a_3),T_{tu}(-\lambda^{(\epsilon(s)-\epsilon(t))/2}a_4)}(T_{-s,u}(\sigma_{s,-t}\bar\sigma_{sr}a_1a_2a_3a_4),e)\]
for any $u\neq \pm s,\pm t$, and
\[(\tau_1,\xi_1)\xrightarrow{T_{-s,u}(a_2),T_{-s,r}(a_3),T_{uv}(-\lambda^{(\epsilon(s)-\epsilon(t))/2}a_4)}(T_{-s,v}(\sigma_{s,-t}\bar\sigma_{sr}a_1a_2a_3a_4),e)\]
for any $u\neq \pm r, \pm s,\pm t$ and $v\neq \pm s,\pm u$. It follows from Lemma \ref{lemredux} that $H$ contains the matrices $T_{-s,u}(\sigma_{s,-t}\bar\sigma_{sr}a_1a_2a_3a_4)$ where $u\neq \pm s$. It is now an easy exercise to obtain (i) (see the proof of Lemma \ref{lemsub1}).\\
On the other hand one checks easily that 
\[(\tau_2,\xi_2)\xrightarrow{T_{sr}(a_2),T_{tu}(a_3),T_{vs}(-a_4)}(T_{vu}(\sigma_{rs}\bar\sigma_{rr}a_1a_2a_3a_4),e)\]
for any $u\neq \pm r, \pm s,\pm t$ and $v\neq r,\pm s,-t,\pm u$. It follows from Lemma \ref{lemredux} that 
\begin{equation}
T_{vu}(\sigma_{rs}\bar\sigma_{rr}a_1a_2a_3a_4)\in H\text{ for any }u\neq \pm r, \pm s,\pm t \text{ and }v\neq r,\pm s,-t,\pm u.
\end{equation}
Choose $p,q\in \Theta_{hb}$ such that $p\neq \pm q$ and $p,q\neq\pm r,\pm s$. By applying (13) with $t=\pm p$ and $u=\pm q$ resp. $t=\pm q$ and $u=\pm p$ one obtains 
\begin{equation}
T_{\pm p,\pm q}(\sigma_{rs}\bar\sigma_{rr}a_1a_2a_3a_4),
T_{\pm q,\pm p}(\sigma_{rs}\bar\sigma_{rr}a_1a_2a_3a_4)\in H.
\end{equation}
It is an easy exercise to deduce (ii) from (14) (see the proof of Lemma \ref{lemsub1}).
\end{proof}

Lemmas \ref{4lemsub3} and \ref{4lemsub4} below are analogues of Lemmas \ref{lemsub3} and \ref{lemsub4}, respectively.

\begin{lemma}\label{4lemsub3}
Let $(I,\Omega)$ be an odd form ideal and $H$ a subgroup of $\U_{2n+1}(R,\Delta)$ normalised by $\EU_{2n+1}(I,\Omega)$. Then $T_{ij}(\sigma_{rs}a)\in H$ for any $i,j\in \Theta_{\hb}, i\neq \pm j$, $\sigma\in H$, $r,s
\in\Theta_{\hb},r\neq \pm s$ and $a\in I^7$.
\end{lemma}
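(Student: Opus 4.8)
The plan is to reproduce the proof of Lemma \ref{lemsub3} essentially verbatim, with Lemma \ref{4lemsub2} taking over the role there played by Lemma \ref{lemsub2}. Since the former carries the exponent $6$ where the latter had $8$, the output of the argument will be $I^{7}$ rather than $I^{9}$, which is exactly what is asserted here.

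Concretely, I would fix $\sigma\in H$ and $r,s\in\Theta_{\hb}$ with $r\neq\pm s$, and choose $t\in\Theta_{\hb}$ with $t\neq\pm r,\pm s$ (possible since $2n\geq 8>4$). For a given $a_0\in I$ I would set $\tau:=[\sigma^{-1},T_{tr}(-\bar\sigma_{rs}\bar a_0)]$. Because $I$ is an involution invariant ideal, $-\bar\sigma_{rs}\bar a_0\in I$, so $T_{tr}(-\bar\sigma_{rs}\bar a_0)\in\EU_{2n+1}(I,\Omega)$, and as $H$ is normalised by this group, $\tau\in H$. Next, letting $J$ be the involution invariant ideal generated by $\{a_0\sigma_{rs}\bar\sigma_{rr},\,a_0\sigma_{rs}\bar\sigma_{r,\pm t}\}$, I would express the entries of $\sigma^{-1}$ through those of $\sigma$ via Lemma \ref{36} and carry out the same elementary commutator computation as in Lemma \ref{lemsub3}, obtaining $\tau_{tt}\equiv 1\bmod J$ and $\tau_{tr}\equiv\bar\sigma_{rs}\bar a_0\bmod J$, hence $\tau_{tt}\bar\tau_{tr}\equiv\sigma_{rs}a_0\bmod J$.

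I would then feed $\tau\in H$ into Lemma \ref{4lemsub2}(ii) (with index triple $(t,r,u)$ for a suitable $u\neq\pm t,\pm r$, which exists as $2n\geq 8$), getting $T_{ij}(\tau_{tr}\bar\tau_{tt}a)\in H$ for all $i\neq\pm j$ in $\Theta_{\hb}$ and all $a\in I^{6}$, and therefore $T_{ij}(\tau_{tt}\bar\tau_{tr}a)\in H$ for all such $a$ by Relation (S1) of Lemma \ref{39}. Writing $\tau_{tt}\bar\tau_{tr}a=\sigma_{rs}a_0a+xa$ with $x\in J$, I would absorb the error term $T_{ij}(xa)$: since $R$ is commutative, $xa$ is a finite sum of elements of the form $\sigma_{rs}\bar\sigma_{rr}b$, $\sigma_{rs}\bar\sigma_{r,\pm t}b$ and their involutes with $b\in I^{6}$ (the factor $a_0$ being absorbed into $b$), and each such term gives an element of $H$ by Lemma \ref{4lemsub2}(i),(ii) after re-indexing and applying (S1). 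This yields $T_{ij}(\sigma_{rs}a_0a)\in H$ for all $a\in I^{6}$; since $a_0\in I$ is arbitrary and $T_{ij}(\cdot)$ is additive in its argument (Relation (S2)), one concludes $T_{ij}(\sigma_{rs}c)\in H$ for all $c\in I^{7}$.

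The only genuinely computational point is the pair of congruences $\tau_{tt}\equiv 1$ and $\tau_{tr}\equiv\bar\sigma_{rs}\bar a_0\bmod J$, but this is a purely mechanical expansion of $[\sigma^{-1},T_{tr}(\cdot)]$ using Lemma \ref{36}, word for word the one already performed for $\U_{7}(R,\Delta)$ in Lemma \ref{lemsub3}; no new phenomenon occurs for $n\geq 4$. Thus the ``obstacle'', such as it is, is entirely bookkeeping: making sure that both parts of Lemma \ref{4lemsub2} contribute exponent $6$ rather than $8$, so that the final exponent is $7$ and not $9$.
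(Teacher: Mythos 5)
Your proposal is correct and coincides with the paper's own proof: the paper proves Lemma \ref{4lemsub3} by repeating the argument of Lemma \ref{lemsub3} verbatim, forming $\tau=[\sigma^{-1},T_{tr}(-\bar\sigma_{rs}\bar a_0)]$, establishing the same congruences modulo the same ideal $J$, and then invoking Lemma \ref{4lemsub2} (with its exponent $6$) in place of Lemma \ref{lemsub2}, which is exactly the bookkeeping you describe. Nothing further is needed.
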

\begin{proof}
Choose a $t\in \Theta_{hb}$ such that $t\neq\pm r,\pm s$. Let $a_0\in I$ and set 
\[\tau:=[\sigma^{-1},T_{tr}(-\bar\sigma_{rs}\bar a_0)]\in H.\]
Let $J$ be the involution invariant ideal generated by the set $\{a_0\sigma_{rs}\bar\sigma_{rr},a_0\sigma_{rs}\bar\sigma_{r,\pm t}\}$.
Clearly
\begin{align*}
\tau_{tt}&=1-\sigma'_{tt}\bar\sigma_{rs}\bar a_0\sigma_{rt}+\sigma_{t,-r}'\lambda^{(\epsilon(r)-\epsilon(t))/2}\sigma_{rs}a_0\sigma_{-t,t}\\
&=1-\sigma'_{tt}\underbrace{\bar\sigma_{rs}\bar a_0\sigma_{rt}}_{\in J}+\lambda^{-\epsilon(t)}\underbrace{\bar\sigma_{r,-t}\sigma_{rs} a_0}_{\in J}\sigma_{-t,t}\\
&\equiv 1\bmod J
\end{align*}
and 
\begin{align*}
\tau_{tr}&=-\sigma'_{tt}\bar\sigma_{rs}\bar a_0\sigma_{rr}+\sigma_{t,-r}'\lambda^{(\epsilon(r)-\epsilon(t))/2}\sigma_{rs}a_0\sigma_{-t,r}+\tau_{tt}\bar\sigma_{rs}\bar a_0\\
&=\sigma'_{tt}\underbrace{\bar\sigma_{rs}\bar a_0\sigma_{rr}}_{\in J}+\lambda^{-\epsilon(t)}\underbrace{\bar\sigma_{r,-t}\sigma_{rs} a_0}_{\in J}\sigma_{-t,r}+\tau_{tt}\bar\sigma_{rs}\bar a_0\\
&\equiv \bar\sigma_{rs}\bar a_0\bmod J
\end{align*}
by Lemma \ref{36}. Hence $\tau_{tt}\bar\tau_{tr}\equiv \sigma_{rs}a_0\bmod J$. By Lemma \ref{4lemsub2}(ii) we have $T_{ij}(\tau_{tr}\bar\tau_{tt}a)\in H$ for any $i,j\in \Theta_{\hb}, i\neq \pm j, a\in I^6$ whence $T_{ij}(\tau_{tt}\bar\tau_{tr}a)\in H$ for any $i,j\in \Theta_{\hb}, i\neq \pm j, a\in I^6$ (because of Relation (S1) in Lemma \ref{39}). Since $\tau_{tt}\bar\tau_{tr}a= \sigma_{rs}a_0a+xa$ for some $x\in J$, we obtain the assertion of the lemma in view of Lemma \ref{4lemsub2}.
\end{proof}

\begin{lemma}\label{4lemsub4}
Let $(I,\Omega)$ be an odd form ideal and $H$ a subgroup of $\U_{2n+1}(R,\Delta)$ normalised by $\EU_{2n+1}(I,\Omega)$. Let $(J,\Sigma)=U(H)$. Then 
$\EU_{2n+1}(J I^{10},\Omega_{\min}^{J I^{10}})\leq H$.
\end{lemma}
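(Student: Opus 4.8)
The plan is to imitate the proof of Lemma \ref{lemsub4} essentially line by line; the only structural change is that the role played there by Lemma \ref{lemsub3} is now played by Lemma \ref{4lemsub3}, which delivers the transvections $T_{ij}(\sigma_{rs}a)$ already for $a\in I^{7}$ rather than $a\in I^{9}$. Consequently every level occurring in the argument drops by two, so the conclusion becomes $\EU_{2n+1}(JI^{10},\Omega_{\min}^{JI^{10}})\leq H$ in place of $\EU_{7}(JI^{12},\Omega_{\min}^{JI^{12}})\leq H$; in short, the exponents $9,10,11,12$ appearing in the $n=3$ argument get replaced throughout by $7,8,9,10$.

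Concretely, I would fix $\sigma\in\U_{2n+1}(R,\Delta)$, indices $i,j,r,s\in\Theta_{\hb}$ with $i\neq\pm j$ and $r\neq\pm s$, and elements $x,y\in J(\Delta)$, and reduce, exactly as in the proof of Lemma \ref{lemsub4}, to proving the seven statements: $T_{ij}(\sigma_{rs}a)\in H$ for $a\in I^{7}$; $T_{ij}(\sigma_{r,-r}a)\in H$, $T_{ij}(\sigma_{r0}xa)\in H$, $T_{ij}(\bar x\mu\sigma_{0s}a)\in H$, $T_{ij}((\sigma_{rr}-\sigma_{ss})a)\in H$ and $T_{ij}((\sigma_{rr}-\sigma_{-r,-r})a)\in H$ for $a\in I^{8}$; and $T_{ij}(\bar x\mu(\sigma_{00}-\sigma_{ss})ya)\in H$ for $a\in I^{9}$. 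Granting these, Relations (S5) and (SE2) of Lemma \ref{39} produce all generators of $\EU_{2n+1}(JI^{10},\Omega_{\min}^{JI^{10}})$ inside $H$ just as before, since by the definition of $U(H)=(J,\Sigma)$ the ideal $J$ is the involution-invariant ideal generated by the entries listed in Definition \ref{deful1}. The first statement is precisely Lemma \ref{4lemsub3}. For each of the next five I would conjugate $\sigma$ by a single elementary transvection — namely $T_{sr}(b)$; $T_{-s}(xb,\bar bzb)$ with $z$ chosen so that $(x,z)\in\Delta^{\epsilon(s)}$; $T_{r}(x\bar b,bz\bar b)$ with $(x,z)\in\Delta^{\epsilon(-r)}$; and $T_{sr}(b)$ again, respectively — where $b\in I$, read off the appropriate entry (at positions $(s,-r)$, $(r,s)$, $(r,s)$, $(s,r)$) of the resulting element of $H$, apply Lemma \ref{4lemsub3} with argument $c\in I^{7}$, and discard the superfluous summands, which lie in $H$ again by Lemma \ref{4lemsub3}; the sixth statement then follows from the fifth via the identity $T_{ij}((\sigma_{rr}-\sigma_{-r,-r})a)=T_{ij}((\sigma_{rr}-\sigma_{ss})a)T_{ij}((\sigma_{ss}-\sigma_{-r,-r})a)$. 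The seventh is obtained in the same manner from the fourth, conjugating by $T_{-s}(yb,\bar bzb)$ with $b\in I$ and $(y,z)\in\Delta^{\epsilon(s)}$, reading off the entry at position $(0,s)$, and absorbing a factor $c\in I^{8}$.

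I do not expect a genuine obstacle here: the whole argument is a transcription of the $n=3$ case with the power bookkeeping shifted by two, and the only place where $n\geq 4$ actually enters is the input Lemma \ref{4lemsub3} (which in turn rests on Lemma \ref{4lemsub2}); everything downstream of it is formal manipulation with the commutator identities of Lemma \ref{39} together with Lemma \ref{36}. The two points worth double-checking are that the auxiliary parameters $z$ with $(x,z)\in\Delta^{\epsilon(\cdot)}$, respectively $(y,z)\in\Delta^{\epsilon(\cdot)}$, can still be selected — this is guaranteed precisely because $x,y\in J(\Delta)$, exactly as in the proof of Lemma \ref{lemsub4} — and that the shifted exponents remain internally consistent, i.e.\ that $7\to 8\to 9$ in the seven reductions and $7+3=10$ at the final commutator step.
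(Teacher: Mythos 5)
Your proposal matches the paper's own proof: the paper likewise reduces to the same seven membership statements with exponents $7,8,\dots,9$ (citing Lemma \ref{4lemsub3} in place of Lemma \ref{lemsub3} and referring back to the proof of Lemma \ref{lemsub4} for the conjugation arguments), and then concludes via Relations (S5) and (SE2) of Lemma \ref{39}. The bookkeeping you describe is exactly the intended one, so the argument is correct and essentially identical to the paper's.
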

\begin{proof}
Let $\sigma\in \U_{2n+1}(R,\Delta)$ and $i,j,r,s\in\Theta_{\hb}$ such that $i\neq\pm j$ and $r\neq \pm s$. Furthermore, let $x,y\in J(\Delta)$. The previous lemma implies that 
\begin{enumerate}[(i)]
\item $T_{ij}(\sigma_{rs}a)\in H$ for any $a\in I^{7}$,
\item $T_{ij}(\sigma_{r,-r}a)\in H$ for any $a\in I^{8}$,
\item $T_{ij}(\sigma_{r0}xa)\in H$ for any $a\in I^{8}$,
\item $T_{ij}(\bar x \mu\sigma_{0s}a)\in H$ for any $a\in I^{8}$,
\item $T_{ij}((\sigma_{rr}-\sigma_{ss})a)\in H$ for any $a\in I^{8}$,
\item $T_{ij}((\sigma_{rr}-\sigma_{-s,-s})a)\in H$ for any $a\in I^{8}$ and
\item $T_{ij}(\bar x\mu (\sigma_{00}-\sigma_{ss})ya)\in H$ for any $a\in I^{9}$,
\end{enumerate}
cf. the proof of Lemma \ref{lemsub4}. It follows that $\EU_{2n+1}(J I^{10},\Omega_{\min}^{J I^{10}})\leq H$ in view of Relations (S5) and (SE2) in Lemma \ref{39}.
\end{proof}

\begin{theorem}\label{thmm4}
Let $(I,\Omega)$ be an odd form ideal and $H$ a subgroup of $\U_{2n+1}(R,\Delta)$ normalised by $\EU_{2n+1}(I,\Omega)$. Then 
\[\EU_{2n+1}(U(H)\ast I^{10})\leq H\leq \CU_{2n+1}((R,\Delta),U(H)).\]
\end{theorem}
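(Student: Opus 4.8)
The plan is to follow the proof of Theorem~\ref{thmm1} almost verbatim, replacing the rank~$3$ ``transporter'' results by their rank-$n\geq4$ counterparts and keeping track of the (smaller) powers of $I$ thereby obtained. The inclusion $H\leq\CU_{2n+1}((R,\Delta),U(H))$ is immediate from Proposition~\ref{propul}, so everything reduces to proving $\EU_{2n+1}(U(H)\ast I^{10})\leq H$. Write $U(H)=(J,\Sigma)$, so that $U(H)\ast I^{10}=(JI^{10},\Omega_{\min}^{JI^{10}}\+\Sigma\circ I^{10})$. The short root part $\EU_{2n+1}(JI^{10},\Omega_{\min}^{JI^{10}})\leq H$ is exactly Lemma~\ref{4lemsub4}. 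Since $\Omega_{\min}^J\circ I^{10}\subseteq\Omega_{\min}^{JI^{10}}$ and $\Sigma=\Omega_{\min}^J\+Z\circ R$ with $Z$ the generating set from Definition~\ref{deful1}, it then remains to produce, for every $\sigma\in H$, all $i,s\in\Theta_{\hb}$, $(y,z)\in\Delta$ and $a\in I^{10}$, the extra short root transvections $T_i((Q(\sigma_{*s})\circ a)^{-\epsilon(i)})\in H$ (part (i)) and $T_i((((Q(\sigma_{*0})\minus(1,0))\circ y\+(y,z)\minus(y,z)\circ\sigma_{ss})\circ a)^{-\epsilon(i)})\in H$ (part (ii)).

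For part (i) I would run the same three step argument as in Theorem~\ref{thmm1}. The ESD transvections $T_{*j}(u,x)$ of Definition~\ref{defesd} and the conjugation identity~\eqref{e3} are purely formal, so they (and their verification) carry over word for word to $\U_{2n+1}$; moreover for $n\geq4$ there is strictly more room to choose the auxiliary indices (``pick $t\neq\pm r,\pm s$'', and so on), so no index constraint causes trouble. In Step~1 one combines an ESD conjugate ${}^{\sigma}T_{*,-s}(-u)$ with a correction $\tau\in H$ supplied by Lemma~\ref{4lemsub3} (whose exponent is now $7$, versus the exponent $9$ of Lemma~\ref{lemsub3}) to build a matrix $\zeta$, forms the arrow $(T_{*,-s}(u),\zeta)\xrightarrow{T_{rt}(-c)}(\phi,\psi)$ exactly as before, and deduces from Lemma~\ref{lemredux} that $\phi\psi\in H$; since every factor of $\phi\psi$ except the last lies in $H$ by Lemma~\ref{4lemsub4}, one obtains $T_{-t}((Q(\sigma_{*s})\circ\sigma_{ss}a)^{\epsilon(t)})\in H$ for $s\in\Theta_+$ (and, symmetrically, $s\in\Theta_-$), $t\neq\pm s$ and $a\in I^{8}$. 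Step~2 removes the parasitic factor $\sigma_{ss}$ by expanding $Q(\sigma_{*s})^{-\epsilon(i)}\circ a=Q(\sigma_{*s})^{-\epsilon(i)}\circ\sum_p\sigma'_{sp}\sigma_{ps}a$ through Lemma~\ref{last} (here commutativity of $R$ is used to reshuffle the scalars) and absorbing all the remaining factors via Lemma~\ref{4lemsub4}, giving part (i) for $i\neq\pm s$ and $a\in I^{9}$. Step~3 handles $i\in\{\pm s\}$ by applying Step~2 to $\tau=[\sigma,T_{sr}(b)]\in H$ with $b\in I$, using \cite[Lemma~63]{bak-preusser} to express the relevant $Q(\tau_{*r})$ through $Q(\sigma_{*s})$, and reabsorbing via Lemma~\ref{last}; this costs one more power of $I$, so part (i) holds for all $i,s$ and $a\in I^{10}$.

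For part (ii) I would apply Step~2 (of the proof of (i)) to $\rho=[\sigma,T_s(((y,z)\circ b)^{-\epsilon(s)})]\in H$ with $b\in I$ and $c\in I^{9}$, then use \cite[Lemma~63]{bak-preusser} to relate $Q(\rho_{*,-s})$ to $(Q(\sigma_{*0})\minus(1,0))\circ y\sigma'_{-s,-s}\+(y,z)\circ\sigma'_{-s,-s}\minus(y,z)$, clean up with Lemma~\ref{last} as in Step~2 to replace $\sigma'_{-s,-s}$ by $\sigma_{-s,-s}$, and finally use the short identity from the last lines of the proof of Theorem~\ref{thmm1} (an application of $\minus$ and $\circ$ trading $\sigma_{-s,-s}$ for an arbitrary diagonal entry $\sigma_{rr}$). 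Combining, part (ii) holds for $a\in I^{10}$, and together with Lemma~\ref{4lemsub4} this gives $\EU_{2n+1}(U(H)\ast I^{10})\leq H$.

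The only place that requires genuine care, as opposed to mechanical transcription, is Step~1: one must double check that the explicit formulas for $\zeta_{*r}$, for $\phi$ and $\psi$, and for the ESD conjugation~\eqref{e3} remain valid in $\U_{2n+1}$. In fact this is not a real obstacle, since these are formal matrix identities that never use $n=3$ and merely require that enough distinct hyperbolic indices be available, which $n\geq4$ guarantees a fortiori. The improvement of the exponent from $12$ (for $n=3$) to $10$ (for $n\geq4$) is accounted for entirely by the fact that Lemmas~\ref{4lemsub2}--\ref{4lemsub4} are two powers of $I$ cheaper than their $\U_7$ analogues, the saving originating in the longer admissible commutator chains used in the proof of Lemma~\ref{4lemsub2}.
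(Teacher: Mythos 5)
Your proposal is correct and follows essentially the same route as the paper: the paper's own proof of Theorem \ref{thmm4} simply invokes Proposition \ref{propul} and Lemma \ref{4lemsub4} and then states that parts (i) and (ii) are proved exactly as in Theorem \ref{thmm1}, with Lemmas \ref{4lemsub3} and \ref{4lemsub4} replacing Lemmas \ref{lemsub3} and \ref{lemsub4}. Your bookkeeping of the exponents ($I^8$ in Step 1, $I^9$ in Step 2, $I^{10}$ in Step 3 and in part (ii)) matches the shift from $9$ to $7$ in the transporter lemma and correctly reproduces the paper's argument in more detail than the paper itself gives.
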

\begin{proof}
The inclusion $H\subseteq \CU_{2n+1}((R,\Delta),U(H))$ holds by Proposition \ref{propul}. It remains to show the inclusion $\EU_{2n+1}(U(H)\ast I^{10})\subseteq H$. Because of Lemma \ref{4lemsub4} it suffices to show (i) and (ii) below.
\begin{enumerate}[(i)]
\item $T_{i}((Q(\sigma_{*s})\circ a)^{-\epsilon(i)})\in H$ for any $\sigma\in H$, $i,s\in\Theta_{\hb}$ and $a\in I^{10}$.
\item $T_{i}((((Q(\sigma_{*0})\minus (1,0))\circ y \+ (y,z)\minus (y,z)\circ \sigma_{ss})\circ a)^{-\epsilon(i)})\in H$ for any $\sigma\in H$, $i,s\in\Theta_{\hb}$, $(y,z)\in \Delta$ and $a\in I^{10}$.
\end{enumerate} 
The proof of (i) and (ii) above is essentially the same as the proof of (i) and (ii) in  the proof of Theorem \ref{thmm1} (of course one uses Lemmas \ref{4lemsub3} and \ref{4lemsub4} instead of Lemmas \ref{lemsub3} and \ref{lemsub4}, respectively).
\end{proof}

\begin{theorem}\label{thmm5}
Let $(I,\Omega)$ be an odd form ideal and $H$ a subgroup of $\U_{2n+1}(R,\Delta)$ normalised by $\EU_{2n+1}((R,\Delta),(I,\Omega))$. Then 
\[\EU_{2n+1}((R,\Delta),U(H)\ast I^{10})\leq H\leq \CU_{2n+1}((R,\Delta),U(H)).\]
\end{theorem}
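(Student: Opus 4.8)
The plan is to reduce Theorem \ref{thmm5} to Theorem \ref{thmm4} by the same conjugation argument that derived Theorem \ref{thmm2} from Theorem \ref{thmm1}. By Proposition \ref{propul} the upper inclusion $H\leq \CU_{2n+1}((R,\Delta),U(H))$ is already available, so the only thing to prove is $\EU_{2n+1}((R,\Delta),U(H)\ast I^{10})\leq H$.

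First I would fix an arbitrary $\tau\in\EU_{2n+1}(R,\Delta)$ and pass to the conjugate subgroup $H^{\tau}$. Since $\EU_{2n+1}((R,\Delta),(I,\Omega))$ is by definition the normal closure of $\EU_{2n+1}(I,\Omega)$ in $\EU_{2n+1}(R,\Delta)$, it is normal in $\EU_{2n+1}(R,\Delta)$; hence conjugation by $\tau$ maps it onto itself, and so $H^{\tau}$ is again normalised by $\EU_{2n+1}((R,\Delta),(I,\Omega))$, a fortiori by $\EU_{2n+1}(I,\Omega)$. Moreover Corollary \ref{corul2} gives $U(H^{\tau})=U(H)$. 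Applying Theorem \ref{thmm4} to $H^{\tau}$ yields $\EU_{2n+1}(U(H)\ast I^{10})\leq H^{\tau}$, equivalently $\EU_{2n+1}(U(H)\ast I^{10})^{\tau^{-1}}\leq H$.

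Since $\tau$ was arbitrary, $H$ contains $\EU_{2n+1}(U(H)\ast I^{10})^{\tau^{-1}}$ for every $\tau\in\EU_{2n+1}(R,\Delta)$, hence it contains the subgroup they generate, which is precisely the normal closure $\EU_{2n+1}((R,\Delta),U(H)\ast I^{10})$. This is the desired inclusion. I do not expect any real obstacle: everything is formal once Theorems \ref{thmm4} and Corollary \ref{corul2} are in hand, and the only point worth a line of justification is the normality of $\EU_{2n+1}((R,\Delta),(I,\Omega))$ in $\EU_{2n+1}(R,\Delta)$, which ensures $H^{\tau}$ remains normalised by it.
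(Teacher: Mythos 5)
Your argument is correct and is exactly the paper's proof: the paper proves Theorem \ref{thmm5} by referring to the proof of Theorem \ref{thmm2}, which is precisely this conjugation argument (pass to $H^{\tau}$, note it is still normalised by the relative elementary subgroup, apply Theorem \ref{thmm4} together with Corollary \ref{corul2}, and let $\tau$ range over $\EU_{2n+1}(R,\Delta)$ to recover the normal closure). No gaps.
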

\begin{proof}
See the proof of Theorem \ref{thmm2}.
\end{proof}

\begin{theorem}\label{thmm6}
Suppose that $H\lhd_d G$ where $d$ is a positive integer and $G$ a subgroup of $\U_{2n+1}(R,\Delta)$ containing $\EU_{2n+1}(R,\Delta)$. Suppose $U(H)=(I,\Omega)$. Then \[\EU_{2n+1}((R,\Delta),U(H)\ast I^{k})\leq H \leq \CU_{2n+1}((R,\Delta),U(H))\]
where $k=\frac{10^d-1}{9}-1$.
\end{theorem}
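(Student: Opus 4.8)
The plan is to imitate the proof of Theorem~\ref{thmm3} essentially verbatim, with Theorem~\ref{thmm5} taking over the role that Theorem~\ref{thmm2} plays there; the exponents shrink because each normalising step now costs a factor $10$ instead of $12$. The upper inclusion $H\leq\CU_{2n+1}((R,\Delta),U(H))$ is immediate from Proposition~\ref{propul} (valid for every $n\geq 3$), so the whole content is the lower inclusion $\EU_{2n+1}((R,\Delta),U(H)\ast I^{k})\leq H$, which I would prove by induction on $d$. For the base case $d=1$ the subgroup $H$ is normal in $G$, hence normalised by $\EU_{2n+1}(R,\Delta)$; taking $(I,\Omega)=(R,\Delta)$ in Theorem~\ref{thmm5} (note $\EU_{2n+1}((R,\Delta),(R,\Delta))=\EU_{2n+1}(R,\Delta)$ and $U(H)\ast R=U(H)$ by Definition~\ref{defpq}), and using $k=\frac{10^{1}-1}{9}-1=0$, $I^{0}=R$, gives exactly $\EU_{2n+1}((R,\Delta),U(H))\leq H$.

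For the inductive step write $H=H_0\lhd H_1\lhd\dots\lhd H_d\lhd H_{d+1}=G$ and put $U(H)=(I,\Omega)$, $U(H_1)=(J,\Sigma)$. Applying the induction hypothesis to the chain $H_1\lhd_d G$ yields
\[
\EU_{2n+1}((R,\Delta),U(H_1)\ast J^{k})\leq H_1\leq\CU_{2n+1}((R,\Delta),U(H_1)),\qquad k=\tfrac{10^{d}-1}{9}-1 .
\]
By Definition~\ref{defpq} the odd form ideal $U(H_1)\ast J^{k}$ has ideal part $J^{k+1}$. Since $H\lhd H_1$, the group $H$ is normalised by $\EU_{2n+1}((R,\Delta),U(H_1)\ast J^{k})$, so Theorem~\ref{thmm5}, applied with $U(H_1)\ast J^{k}$ in place of $(I,\Omega)$, gives $\EU_{2n+1}((R,\Delta),U(H)\ast J^{10(k+1)})\leq H$. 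On the other hand $H\leq\CU_{2n+1}((R,\Delta),U(H_1))$ forces $U(H)\subseteq U(H_1)$ by the minimality clause of Proposition~\ref{propul}, hence $I\subseteq J$; since $\ast$ is monotone in its ideal argument, $U(H)\ast I^{10(k+1)}\subseteq U(H)\ast J^{10(k+1)}$, and therefore $\EU_{2n+1}((R,\Delta),U(H)\ast I^{10(k+1)})\leq H$. Finally $10(k+1)=\frac{10^{d+1}-10}{9}=\frac{10^{d+1}-1}{9}-1$, which is the exponent claimed for $d+1$, closing the induction.

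Since Theorems~\ref{thmm4} and~\ref{thmm5} and Proposition~\ref{propul} are already in hand, there is no genuine obstacle in this last step; the only delicate points are the bookkeeping of odd form ideals (in particular that the ideal part of $U(H_1)\ast J^{k}$ is $J^{k+1}$, not $J^{k}$, which is exactly why the exponent is multiplied rather than incremented), the monotonicity of $\ast$ in its ideal argument needed to pass from $I\subseteq J$ to the corresponding containment of elementary subgroups, and the base-case convention $I^{0}=R$. The real work is concentrated in Theorem~\ref{thmm4}, the bounded-power sandwich for a single normalising step; the present theorem merely iterates it $d$ times.
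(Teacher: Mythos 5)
Your proposal is correct and follows exactly the paper's intended argument: the paper proves Theorem~\ref{thmm6} by pointing to the proof of Theorem~\ref{thmm3}, i.e.\ the same induction on $d$ with Theorem~\ref{thmm5} replacing Theorem~\ref{thmm2}, and your base case, the application of Theorem~\ref{thmm5} to $U(H_1)\ast J^{k}$ (whose ideal part is $J^{k+1}$), the passage from $I\subseteq J$ via monotonicity of $\ast$, and the arithmetic $10(k+1)=\frac{10^{d+1}-1}{9}-1$ all match. No discrepancies to report.
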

\begin{proof}
See the proof of Theorem \ref{thmm3}.
\end{proof}



\begin{remark}\label{remrefo}
Let $H$, $d$, $G$, $I$ and $k$ be as in Theorem \ref{thmm6}. Then Theorem \ref{thmm6} asserts that
\begin{equation}
\EU_{2n+1}((R,\Delta),U(H)\ast I^{k})\leq H.
\end{equation}
Clearly (15) is equivalent to 
\begin{equation}
U(H)\ast I^{k}\subseteq L(H)
\end{equation}
Note that also $L(H)\subseteq U(H)$ by Corollary \ref{corul3} and hence one gets a ``sandwich relation''. Moreover, one checks easily that (16) is equivalent to 
\begin{equation}
U(H)\subseteq L(H):I^k.
\end{equation}
Analogously one can reformulate the statements of Theorems \ref{thmm2}, \ref{thmm3} and \ref{thmm5}. For an analogous reformulation of the statements of Theorems \ref{thmm1} and \ref{thmm4} one should change the definition of $L(H)$.
\end{remark}


\begin{thebibliography}{99}


\bibitem{bak_thesis} 
A. Bak, \emph{The stable structure of quadratic modules}, Thesis, Columbia University,
1969.

\bibitem{bak-preusser}
A. Bak, R. Preusser, \emph{The e-normal structure of odd dimensional unitary groups}, J. Pure Appl. Algebra {\bf 222} (2018), 2823--2880.







\bibitem{petrov}
V. A. Petrov, \emph{Odd unitary groups}, J. Math. Sci. {\bf 130} (2005), no. 3, 4752--4766.



\bibitem{preusser_sct2}
R. Preusser, \emph{Sandwich classification for $\Ort_{2n+1}(R)$ and $\U_{2n+1}(R,\Delta)$ revisited}, J. Group Theory {\bf 21} (2018), no. 4, 539--571.

\bibitem{preusser_decomp_almcom}
R. Preusser, \emph{Reverse decomposition of unipotents over noncommutative rings I: General linear groups}, Linear Algebra Appl. {\bf 601} (2020), 285--300.





\bibitem{vaserstein_sub} L. N. Vaserstein, \emph{The subnormal structure of general linear groups}, Math. Proc. Cambridge Phil. Soc., {\bf 99} (1986), no. 3, 425--431.

\bibitem{vaserstein_sub2} L. N. Vaserstein, \emph{The subnormal structure of general linear groups over rings}, Math. Proc. Cambridge Phil. Soc., {\bf 108} (1990), no. 2, 219--229.

\bibitem{vavilov_sub} N. A. Vavilov, \emph{A note on the subnormal structure of general linear groups}, Math. Proc. Cambridge Phil. Soc., {\bf 107} (1990), no. 2, 193--196.



\bibitem{vavilov_sub2} N. A. Vavilov, \emph{Towards the reverse decomposition of unipotents. II. The relative case}, 
Zap. Nauchn. Sem. POMI, {\bf 484} (2019), 5--22.






\bibitem{20}
J. S. Wilson, \emph{The normal and subnormal structure of general linear groups}, Math. Proc.
Cambridge Philos. Soc. {\bf 71} (1972), 163--177.

\bibitem{you}
H. You, \emph{Subgroups of classical groups normalized by relative elementary groups}, J. Pure Appl. Algebr. {\bf 216} (2012), 1040--1051.

\bibitem{zhang_sub1}
Z. Zhang, \emph{Stable sandwich classification theorem for classical-like groups}, Math. Proc. Cambridge Phil. Soc., {\bf 143} (2007), no. 3, 607--619.


\end{thebibliography}
\end{document}